\newcommand{\tr}{\triangleright}
\newcommand{\vartr}{\blacktriangleright}
\newcommand{\OT}{\operatorname{OT}}
\newcommand{\dpr}{\! \cdot \!}  % dot prod
\newcommand{\T}[1][]{\ensuremath{\mathrm{T}#1}}       % Tangent space.
\newcommand{\s}{\ensuremath{\mathfrak{s}}}
\newcommand{\ZZ}{\ensuremath{\mathbb{Z}}}
\newcommand*{\preLie}{\operatorname{preLie}}
\newcommand{\Der}{\operatorname{Der}}
\newcommand{\Img}{\operatorname{Im}}
 \newcommand{\refp}[1]{(\ref{#1})}
    \newcommand*{\LTS}{\operatorname{LTS}}
        \newcommand*{\LAT}{\operatorname{LAT}}
      \newcommand*{\Lie}{\operatorname{Lie}}
\newcommand*{\Mag}{\operatorname{Mag}}
\newcommand*{\Alg}{\operatorname{Alg}}
\newcommand{\spn}{\operatorname{span}}
\newcommand{\ord}[1]{\overrightarrow{#1}}
\renewcommand*{\OT}{\ord{\operatorname{Tree}}}
\newcommand*{\Tree}{{\operatorname{Tree}}}
\renewcommand*{\T}{{\mathcal{T}}}
\newcommand{\J}{\mathcal{J}}
\newcommand{\I}{\mathcal{I}}
\newcommand{\noarg}{\,\_\,}
\numberwithin{equation}{section}
\newtheorem{Theorem}{Theorem}[section]
\newtheorem*{Theorem*}{Theorem}
\newtheorem{Corollary}[Theorem]{Corollary}
\newtheorem{Lemma}[Theorem]{Lemma}
\newtheorem{Proposition}[Theorem]{Proposition}
 { \theoremstyle{definition}
\newtheorem{Definition}[Theorem]{Definition}
\newtheorem{Note}[Theorem]{Note}
\newtheorem{Example}[Theorem]{Example}
\newtheorem{Remark}[Theorem]{Remark} }
\begin{document}

\allowdisplaybreaks

\renewcommand{\thefootnote}{}

\newcommand{\arXivNumber}{2306.15582}

\renewcommand{\PaperNumber}{068}

\FirstPageHeading

\ShortArticleName{Lie Admissible Triple Algebras: The Connection Algebra of Symmetric Spaces}

\ArticleName{Lie Admissible Triple Algebras:\\ The Connection Algebra of Symmetric Spaces\footnote{This paper is a~contribution to the Special Issue on Symmetry, Invariants, and their Applications in honor of Peter J.~Olver. The~full collection is available at \href{https://www.emis.de/journals/SIGMA/Olver.html}{https://www.emis.de/journals/SIGMA/Olver.html}}}

\Author{Hans Z. MUNTHE-KAAS~$^{\rm a}$ and Jonatan STAVA~$^{\rm b}$}

\AuthorNameForHeading{H.Z.~Munthe-Kaas and J.~Stava}

\Address{$^{\rm a)}$~Department of Mathematics and Statistics, UiT The Arctic University of Norway,\\
\hphantom{$^{\rm a)}$}~P.O.~Box 6050, Stakkevollan, 9037 Troms{\o}, Norway}
\EmailD{\href{mailto:hans.munthe-kaas@uit.no}{hans.munthe-kaas@uit.no}}
\URLaddressD{\url{http://hans.munthe-kaas.no}}

\Address{$^{\rm b)}$~Department of Mathematics, University of Bergen, P.O.~Box 7803, 5020~Bergen, Norway}
\EmailD{\href{mailto:jonatan.stava@uib.no}{jonatan.stava@uib.no}}

\ArticleDates{Received December 20, 2023, in final form July 03, 2024; Published online July 25, 2024}

\Abstract{Associated to a symmetric space there is a canonical connection with zero torsion and parallel curvature. This connection acts as a binary operator on the vector space of smooth sections of the tangent bundle, and it is linear with respect to the real numbers. Thus the smooth section of the tangent bundle together with the connection form an algebra we call the \emph{connection algebra}. The constraints of zero torsion and constant curvature makes the connection algebra into a Lie admissible triple algebra. This is a type of algebra that generalises pre-Lie algebras, and it can be embedded into a post-Lie algebra in a canonical way that generalises the canonical embedding of Lie triple systems into Lie algebras. The free Lie admissible triple algebra can be described by incorporating triple-brackets into the leaves of rooted (non-planar) trees.}

\Keywords{Lie admissible triple algebra; connection algebra; symmetric spaces}

\Classification{53C05; 17D25; 05C05; 53C35}

\begin{flushright}
\begin{minipage}{55mm}
\it Dedicated to Peter Olver\\
 in the honour of his 70th birthday
 \end{minipage}
 \end{flushright}

\renewcommand{\thefootnote}{\arabic{footnote}}
\setcounter{footnote}{0}

\section{Introduction}

The connection algebra of a smooth manifold with an affine connection is the vector space of smooth sections of the tangent bundle with the connection viewed as an $\mathbb{R}$-linear binary operator. On a Euclidean space this algebra is a pre-Lie algebra, and on a Lie group it is a~post-Lie algebra. Understanding the connection algebra is interesting by itself, and it also has potential applications for developing algorithms for numerical integration. In the case of pre-Lie algebras,~\cite{B-series_CHV_2010} and \cite{CALAQUE_2011} have demonstrated its relation to numerical integration and B-series. Methods for numerical integration on Lie groups developed in \cite{MK_1995_LB} and \cite{iserles2000lie} laid the foundation for LB-series, a generalization of B-series for which the underlying algebraic structure is a post-Lie algebra~\cite{CEFMK19, KMKL_15}.

Pre-Lie algebras were first defined by Gerstenhaber \cite{Gerstenhaber_63} and Vinberg \cite{Vinberg}. Chapoton and Livernet proved in \cite{Chapoton_and_Livernet_01} that the free pre-Lie algebra is given as the span of non-planar rooted trees. Another proof of this was given by L\"{o}fwall and Dzhumadil'daev in \cite{Dzhumadildeav_and_Lofwall_02}. Post-Lie algebras were first described by Vallette in \cite{BV_2007}, but this structure also appears in the D-algebra described in \cite{MK_Wright_08}. In \cite{MK_Lundervold_2013, BV_2007}, it is also shown that the free post-Lie algebra is given as the free Lie algebra over the planar rooted trees. An explicit basis and the dimension of the graded components are given in \cite{MKK_03}.

Symmetric spaces are the natural next level of complexity beyond Lie groups. Due to Nomizu~\cite{Nomizu_54}, there is a canonical connection associated to a symmetric space with zero torsion and parallel curvature. In this article we will focus on the type of algebra defined by this connection, a type of algebra we have named Lie admissible triple (LAT) algebra. Geometric integration algorithms have recently been introduced for symmetric spaces~\cite{munthekaas24gio}. The analysis of numerical algorithms on symmetric spaces motivates the study of (Lie)--Butcher series on symmetric spaces. In the present work, we characterise the free algebra generated by the canonical connection on symmetric spaces. This is a first step in developing a comprehensive theory for series developments on symmetric spaces.

When investigating connection algebras, it is convenient to define a triple-bracket as the skew-symmetrization of the associator. Geometrically this is the Ricci identity \cite{Ricci_1900}. A LAT algebra is defined by two relations, both best described using the triple-bracket.

A key question when investigating an algebraic structure is what the free object looks like. The main result of this article provides a basis for the free LAT algebra. This is presented in Theorem \ref{th:LATbasis}. To be able to describe the free LAT algebra it is necessary to have a description of the free algebra, which is the linear span of the free magma. The free algebra is commonly described by planar rooted trees, but when working with the triple-bracket this description is inconvenient. We present an alternative basis for the free algebra that works well with the triple-bracket. Theorem \ref{th:OSBB} provides the theoretical link between the planar rooted trees and this new basis. This theorem is purely combinatorical and might be interesting beyond the context of this article.

The article is organised as follows: Section \ref{sec2} introduces the concept of connection algebras and reviews some of the known results in this area. The link between symmetric spaces and LAT algebras will also be made clear. In Section \ref{sec3}, we discuss tensor-algebras, D-algebras and the free algebra, the goal being to state the main result in Theorem \ref{th:LATbasis}. The proof of this theorem will be given later in the paper, but the tools developed in this section is already enough to give a simple proof of the known fact that the free pre-Lie algebra can be described by non-planar rooted trees, see Theorem \ref{th:pre-Lie}.

While the main goal of the next four sections are to provide a proof of the main theorem, these sections do contain some results that are interesting on their own. In Section \ref{sec:Hall}, in Proposition~\ref{prop:LTS_Hall}, we provide a Hall basis for Lie triple systems. In Section \ref{sec:OSBB}, we present an alternative basis for the free tensor algebra. This basis consists of elements that combine symmetric and skew-symmetric parts under a few simple conditions that ensure uniqueness, see Theorem \ref{th:OSBB}. Section \ref{sec:bases} utilizes this theorem to provide a basis for the free algebra that is convenient when working with the LAT relations, before we finally prove the main theorem in Section \ref{sec:proof}.

In the last section, Section \ref{sec:PL-emb}, we relate LAT algebras to post-Lie algebras by an embedding that generalize the standard embedding of a Lie triple system into a $\mathbb{Z}_2$-graded Lie algebra provided by Jacobson in 1951 \cite{Jacobson1951}. This result is presented in Theorem \ref{th:pL_emb}.

\section{The connection algebra}\label{sec2}

Consider a (real) smooth manifold $M$ and denote by $\mathfrak{X}_M= \Gamma(TM)$ the vector space of smooth vector fields on $M$. Let $\nabla$ be an affine connection on $M$. Then $(\mathfrak{X}_M, \nabla)$ will be an algebra over~$\mathbb{R}$ which we will call the \textit{connection algebra} of $M$. Two important geometric objects related to the connection are the curvature
\begin{equation*}%\label{eq:R}
 R(X,Y)Z = \nabla_X \nabla_Y Z - \nabla_Y \nabla_X Z - \nabla_{[X,Y]_J} Z,
\end{equation*}
and the torsion
\begin{equation*}%\label{eq:T}
 T(X,Y) = \nabla_X Y - \nabla_Y X - [X,Y]_J,
\end{equation*}
where $[\noarg , \noarg ]_J$ is the Jacobi--Lie bracket of vector fields. Since we will work with the connection as a product in a non-associative algebra it will be convenient to introduce an alternative notation. We let $X \rhd Y := \nabla_X Y$ whenever the latter is defined, but we will also use notation $\rhd$ for a binary product in a purely algebraic setting. The (negative) associator of $\rhd$ is given by $\text{ass}(x,y,z)= x \rhd (y \rhd z) - (x \rhd y ) \rhd z$ and we define the triple-bracket as
\begin{equation*}%\label{eq:TB}
 [x,y,z] := \text{ass}(x,y,z) - \text{ass}(y,x,z),
\end{equation*}
and we immediately note that
\begin{equation*}%\label{eq:RicciId}
 R(X,Y)Z - T(X,Y) \rhd Z = [X,Y,Z].
\end{equation*}
This relation is called the Ricci identity, although it is often stated in the torsion free case \cite{Besse_1987, Ricci_1900}.

The Bianchi identities are two fundamental relations connecting the torsion and the Rimannian curvature tensor of a manifold. For $X,Y,Z \in \mathfrak{X}_M$, we have
\begin{alignat*}{3}%\label{eq:1Bianchi}
& 0= \sum_{\circlearrowleft (X,Y,Z)} T(T(X,Y),Z) + (\nabla_X T)(Y,Z) - R(X,Y)Z , \qquad && \mbox{(1 Bianchi)},&\\
&0= \sum_{\circlearrowleft (X,Y,Z)} (\nabla_X R)(Y,Z) - R(X,T(Y,Z)) , && \mbox{(2 Bianchi)},& %\label{eq:2Bianchi}
\end{alignat*}

where $\sum_{\circlearrowleft}$ means the sum over all cyclic permutation of the indicated arguments.

\subsection{Some algebras related to constant curvature and torsion}

Various geometries are related to properties of the triple-bracket of the connection:
\begin{itemize}\itemsep=0pt
 \item A \emph{pre-Lie algebra} is $(\mathcal{A},\rhd)$ where $[x,y,z] \equiv 0$. A manifold has a pre-Lie connection on~$\mathfrak{X}_M$ if and only if $M$ is locally an Abelian Lie group, i.e., locally a Euclidean space.
 \item A \emph{Lie admissible algebra} is $(\mathcal{A},\rhd)$ where $[x,y,z]+[y,z,x] + [z,x,y]\equiv 0$. The bracket defined by the skew product $[x,y] := x\rhd y - y\rhd x$ is a \emph{Lie bracket} if and only if $(\mathcal{A},\rhd)$ is Lie admissible. A connection algebra is Lie admissible if and only if the connection is \emph{torsion free}, $T=0$, in which case $X\rhd Y -Y\rhd X = [X,Y]_J$ (see \cite[Proposition 2.13]{MK_Stern_Verdier_20}). This carries little geometric information, since any connection has a related torsion free connection with the same geodesics.
 \item A \emph{post-Lie algebra} is $(\mathcal{A},\rhd,[\noarg,\noarg])$ where $[\noarg,\noarg]$ is a Lie bracket and where $\rhd$ and $[\noarg,\noarg]$ relate as
 \begin{align*}
 & [x,y]\rhd z= [x,y,z],\qquad x\rhd [y,z]= [x\rhd y,z]+ [y,x\rhd z].
 \end{align*}
 A connection algebra $(\mathfrak{X}_M,\nabla)$ is post-Lie if $R\equiv 0$ and $\nabla T\equiv 0$, in which case there are two Lie brackets on the vector fields $[X,Y] = -T(X,Y)$ and $[X,Y]_J = X\rhd Y-Y\rhd X + [X,Y]$. A post-Lie connection exists on $\mathfrak{X}_M$ if and only if $M$ is locally a Lie group. More generally, a post-Lie structure exists on an anchored vector bundle if and only if the bundle is an action Lie algebroid~\cite{MK_Lundervold_2013}.%[Prop. 2.23 or 2.24 (?)].
\end{itemize}

In this paper, we are interested in the canonical connection on a locally symmetric space (such as the Levi-Civita connection on a Riemannian symmetric space). This is a connection where $T \equiv 0$ and $\nabla R\equiv 0$.

\begin{Remark}
For any locally symmetric space, there is a canonical connection such that $\nabla R=0$ and $T=0$. Equivalently, any manifold with such a connection is a locally symmetric space. A~locally symmetric space is a (globally) symmetric space if it is connected, simply connected and geodesically complete \cite{Loos1, Nomizu_54}. % See Loos, PROPOSITION 4.10. in Chapter II, §4
\end{Remark}

\begin{Definition}[LAT] An algebra $(\mathcal{A},\rhd)$ is called a \emph{Lie admissible triple algebra}
if the triple bracket verifies
\begin{align}
&[x,y,z]+[y,z,x] + [z,x,y]\equiv 0,\label{eq:LAT1}\\
&w\rhd [x,y,z] - [w\rhd x,y,z] - [x,w\rhd y, z] - [x,y,w\rhd z]\equiv 0.\label{eq:LAT2}
\end{align}
\end{Definition}

A connection algebra with $T=0$ verifies~\refp{eq:LAT1}, hence $[X,Y,Z] = R(X,Y)Z$ and $\nabla R=0$
is expressed in~\refp{eq:LAT2}. So the algebra of a torsion free connection with constant curvature is a Lie admissible triple algebra. On the other hand, if the connection algebra is LAT, then equation~(\ref{eq:LAT1}) implies that the connection is torsion free and then equation~(\ref{eq:LAT2}) gives exactly constant curvature. The proposition below follows.

\begin{Proposition}
 A connection algebra $(\mathfrak{X}_M, \rhd)$ is LAT if and only if the manifold $(M,\nabla)$ is a locally symmetric space.
\end{Proposition}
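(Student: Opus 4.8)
The plan is to read off the two defining LAT relations directly as the geometric conditions $T\equiv 0$ and $\nabla R\equiv 0$, using the Ricci identity together with the first Bianchi identity, and then invoke the fact recalled above that a manifold carrying a connection with $T\equiv 0$ and $\nabla R\equiv 0$ is precisely a locally symmetric space.

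First I would isolate the two input identities. By the Ricci identity $R(X,Y)Z - T(X,Y)\rhd Z = [X,Y,Z]$, whenever $T\equiv 0$ the triple-bracket of the connection algebra is exactly the curvature operator, $[X,Y,Z]=R(X,Y)Z$. Feeding $T\equiv 0$ into the first Bianchi identity makes the torsion terms vanish, leaving $\sum_{\circlearrowleft(X,Y,Z)}R(X,Y)Z=0$, which is precisely relation~\refp{eq:LAT1}. Conversely, a connection algebra is Lie admissible if and only if the connection is torsion free (as recalled above), and \refp{eq:LAT1} is exactly the Lie admissibility condition for $\rhd$; hence for a connection algebra \refp{eq:LAT1} already forces $T\equiv 0$.

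Next, assuming $T\equiv 0$ so that $[X,Y,Z]=R(X,Y)Z$, I would expand the covariant derivative of the curvature tensor in the algebraic notation:
\[
(\nabla_W R)(X,Y)Z = W\rhd[X,Y,Z]-[W\rhd X,Y,Z]-[X,W\rhd Y,Z]-[X,Y,W\rhd Z].
\]
Thus $\nabla R\equiv 0$ is literally relation~\refp{eq:LAT2}, and conversely.

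Assembling the pieces: if $(M,\nabla)$ is a locally symmetric space then $T\equiv 0$, which yields \refp{eq:LAT1} via Ricci and the first Bianchi identity, while $\nabla R\equiv 0$ yields \refp{eq:LAT2}, so $(\mathfrak{X}_M,\rhd)$ is LAT. In the other direction, if $(\mathfrak{X}_M,\rhd)$ is LAT, then \refp{eq:LAT1} forces $T\equiv 0$, hence $[X,Y,Z]=R(X,Y)Z$ and \refp{eq:LAT2} reads $\nabla R\equiv 0$; therefore $(M,\nabla)$ is a locally symmetric space. The only step that is not pure bookkeeping is the passage from \refp{eq:LAT1} to $T\equiv 0$ in the forward direction: since \refp{eq:LAT1} is an abstract algebraic identity, to conclude that the torsion vanishes (rather than merely that the skew product $X\rhd Y-Y\rhd X$ satisfies the Jacobi identity) one must use that the Jacobi--Lie bracket is the unique Lie bracket compatible with the connection. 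I expect this to be the main point requiring care; everything else is unwinding the definitions of the curvature and torsion tensors in the $\rhd$-notation.
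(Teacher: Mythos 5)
Your proof is correct and follows essentially the same route as the paper: identify \refp{eq:LAT1} with torsion-freeness via the Ricci identity, the first Bianchi identity and the cited equivalence of Lie admissibility with $T\equiv 0$ from \cite[Proposition~2.13]{MK_Stern_Verdier_20}, then identify \refp{eq:LAT2} with $\nabla R\equiv 0$ once $[X,Y,Z]=R(X,Y)Z$. The concern you raise at the end is already settled by that cited proposition, which is exactly the ``as recalled above'' fact you invoke, so no further argument about uniqueness of the Lie bracket is needed.
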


\begin{Lemma}[LTS]\label{lemma:LTS} If $(\mathcal{A},\rhd)$ is an LAT then $(\mathcal{A},[\noarg,\noarg,\noarg])$ is a \emph{Lie triple system} $($LTS$)$, defined as a vector space with a tri-linear bracket satisfying
\begin{align}
&[x,y,z]= -[y,x,z],\label{eq:LTS1}\\
&0= [x,y,z] + [y,z,x] + [z,x,y] ,\label{eq:LTS2}\\
&[u,v,[x,y,z]]= [[u,v,x],y,z]+[x,[u,v,y],z] + [x,y,[u,v,z]] .\label{eq:LTS3}
\end{align}
\end{Lemma}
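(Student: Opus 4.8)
The plan is to verify the three defining identities of a Lie triple system one at a time. Identity \refp{eq:LTS1} is immediate from the definition $[x,y,z] = \ass(x,y,z) - \ass(y,x,z)$: interchanging $x$ and $y$ simply negates the right-hand side. Identity \refp{eq:LTS2} is nothing but the LAT axiom \refp{eq:LAT1}. So all of the work is in the Jacobi-type identity \refp{eq:LTS3}.

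For \refp{eq:LTS3}, the key move is to read \refp{eq:LAT2} operator-theoretically. For $w \in \mathcal{A}$ write $D_w := w \rhd (\noarg)$ for left multiplication; then \refp{eq:LAT2} says precisely that each $D_w$ is a \emph{derivation} of the trilinear bracket $[\noarg,\noarg,\noarg]$. Next I would express the operator $L_{u,v} := [u,v,\noarg]$ through the $D$'s. Expanding the triple bracket,
\begin{equation*}
 [u,v,z] = u\rhd(v\rhd z) - (u\rhd v)\rhd z - v\rhd(u\rhd z) + (v\rhd u)\rhd z,
\end{equation*}
which, since $w \mapsto D_w$ is linear, rewrites as the operator identity
\begin{equation*}
 L_{u,v} = D_u D_v - D_v D_u - D_{u\rhd v - v\rhd u} = [D_u, D_v] - D_{u\rhd v - v\rhd u}.
\end{equation*}
Now the commutator of two derivations of a multilinear operation is again a derivation (the mixed terms cancel upon composing and subtracting), and $D_{u\rhd v - v\rhd u}$ is a derivation by the previous remark; hence $L_{u,v}$ is a derivation of $[\noarg,\noarg,\noarg]$. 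Spelling out $L_{u,v}\big([x,y,z]\big) = [L_{u,v}x,\,y,\,z] + [x,\,L_{u,v}y,\,z] + [x,\,y,\,L_{u,v}z]$ is exactly \refp{eq:LTS3}.

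I do not anticipate a genuine obstacle here; the only step requiring (routine) care is the standard fact that $[D_u,D_v]$ acts as a derivation, which one checks by expanding $D_u D_v$ and $D_v D_u$ on $[x,y,z]$ and observing that every term in which both a $D_u$ and a $D_v$ land in a single slot occurs twice with opposite signs. An alternative, more computational route — directly substituting the definition of the triple bracket into \refp{eq:LTS3} and repeatedly applying \refp{eq:LAT2} — also works, but the operator argument above is the cleaner one, and it is the version I would write up.
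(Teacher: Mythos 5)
Your proof is correct. Parts \refp{eq:LTS1} and \refp{eq:LTS2} are handled exactly as in the paper (skew-symmetry from the definition of the triple bracket, and \refp{eq:LTS2} being verbatim \refp{eq:LAT1}). For \refp{eq:LTS3} the paper merely asserts that it "follows from \refp{eq:LAT2} by a tedious but straightforward computation," i.e.\ it implicitly takes the brute-force route you mention at the end as your alternative. Your primary argument is genuinely different and cleaner: you observe that \refp{eq:LAT2} says each $D_w = w\rhd(\noarg)$ is a derivation of the trilinear bracket, then write the operator identity $L_{u,v} = [D_u,D_v] - D_{u\rhd v - v\rhd u}$ (which is just the definition of the triple bracket read operator-theoretically, using bilinearity of $\rhd$), and conclude since commutators and linear combinations of derivations of a multilinear operation are again derivations. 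This replaces a long term-by-term cancellation with two standard structural facts, and it also makes transparent \emph{why} the identity holds — $L_{u,v}$ lies in the Lie algebra generated by the inner derivations $D_w$ — which foreshadows the role of $D_{x,y}$ and $\Der(\mathcal{A})$ in the embedding results of Section~\ref{sec:PL-emb}. The only cost is that one must actually verify the derivation-commutator fact for a trilinear (rather than bilinear) operation, but as you note the mixed terms cancel in pairs and this is routine.
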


\begin{proof} Equation \refp{eq:LTS1} follows from the definition of the triple bracket, \refp{eq:LTS2} is~\refp{eq:LAT1}, while~\refp{eq:LTS3} follows from~\refp{eq:LAT2} by a tedious but straightforward computation.
\end{proof}

Lie triple systems have been studied extensively in relation to symmetric spaces, see, for instance,~\cite{Loos1} and~\cite{Bertram_00}. Just as the tangent space of a Lie group has the structure of a Lie algebra, Lie triple systems appear as the tangent space of symmetric spaces. In recent years, the theory of post-Lie algebras has been developed in different directions. Post-Lie algebras form a refinement of Lie algebras, enriching them with structures relating to connections and geodesics. In this paper, we study the new concept of Lie admissible triple algebras, which in many ways relate to Lie triple systems similarly to the relationship between post-Lie algebras and Lie algebras.

\begin{Note}
 The algebraic structure we call Lie admissible triple has previously appeared in the context of integrable systems under the name G-algebra, although with different sign on the associator \cite{GS_00,Sokolov_17}.
\end{Note}

\section{Tensors, trees and the free Lie admissible triple algebra}\label{sec3}

The free Lie admissible triple algebra $\LAT(\mathcal{C})$ over the set $\mathcal{C}$ is defined by the universal property: If $\mathcal{A}$ is a Lie admissible triple algebra and $f \colon \mathcal{C} \rightarrow \mathcal{A}$, then there exists a unique algebra homomorphism $\phi$ such that the diagram below commutes:
\[
\begin{tikzcd}[column sep=small]
\mathcal{C} \arrow[hookrightarrow]{rr} \arrow{rrd}{f}
 && \LAT(\mathcal{C}) \arrow{d}{\phi} \\
 && \mathcal{A}.
\end{tikzcd}
\]

The main result of this article is stated in Theorem \ref{th:LATbasis} at the end of this section. In this theorem, we provide a basis for $\LAT(\mathcal{C})$ in terms of rooted trees and triple-brackets.

\subsection{Tensor algebra}\label{sec:tensor}

Consider a vector space $W$ over a field $K$ with a basis $\mathcal{C}$. Let $T(W)$ be the free tensor algebra over $W$, that is
\[
T(W) = \bigoplus_{i=0}^{\infty} W^{\otimes i} = K \oplus W \oplus (W \otimes W) \oplus (W \otimes W \otimes W) \oplus \cdots .
\]
The natural basis for $T(W)$ is given by all elements on the form
\[
x_1 \otimes x_2 \otimes \dots \otimes x_k, \qquad x_i \in \mathcal{C}, \quad k\in \mathbb{N}.
\]
For simpler notation, we will denote the tensor product between elements by a simple dot: $x \cdot y := x \otimes y$. Since the tensor product is associative but not commutative, the basis elements of~$T(W)$ can be thought of as all the \emph{words} over the \emph{alphabet} $\mathcal{C}$, and the number of basis elements in the graded component $W^{\otimes k}$ is given by the number of words of length $k$ one can make from the elements in $\mathcal{C}$ which is $n^k$, where $n:= |\mathcal{C}|$. If $\omega$ is a word in $T(W)$, we will write $|\omega|=k$ meaning that the length of the word $\omega$ is $k$. We extend this definition to any element of homogeneous degree in $T(W)$ with respect to the tensor product; if $\omega \in W^{\otimes k}$, then $|\omega|=k$, even if $\omega$ is not a word, but a linear combination of such.

Since the tensor algebra is associative, we may define a Lie bracket by
\begin{equation}\label{eq:tensor_bracket}
 [x,y] = x\cdot y - y \cdot x
\end{equation}
for $x,y \in T(W)$. In addition, we may define a map $\mathfrak{s}$ called a \emph{symmetrization map} by
\begin{equation*}%\label{eq:sym_map}
 \mathfrak{s}(x_1 \cdots x_k) = \frac{1}{k!} \sum_{\sigma \in S_k} \sigma(x_1 \cdots x_k),
\end{equation*}
where $x_1, \dots, x_k \in \mathcal{C}$ and $S_k$ is the symmetric group acting by permuting the elements of the words of length $k$. Let $\Tilde{\mathcal{J}}$ be the ideal in $T(W)$ generated by the Lie bracket $[\noarg , \noarg]$. An important relation between permutations of words and the bracket is given in the following proposition.

\begin{Proposition}\label{prop:perm}
 For any $\omega \in T(W)$ with $|\omega| = k$ and any permutation $\rho \in S_k$, we have
 \[
 \omega - \rho(\omega) \in \Tilde{\mathcal{J}}.
 \]
\end{Proposition}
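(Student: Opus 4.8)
The plan is to prove the statement by induction on the length $k = |\omega|$, reducing to the case where $\omega$ is a single basis word and $\rho$ is a transposition of adjacent letters, since adjacent transpositions generate $S_k$ and the general claim follows by composing such swaps (each intermediate difference lying in $\tilde{\mathcal{J}}$, which is a subspace, so the telescoping sum of these differences does too). By linearity in $\omega$ it suffices to treat a word $\omega = x_1 \cdots x_k$ with $x_i \in \mathcal{C}$.

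The key computation is then the adjacent swap: if $\rho$ exchanges positions $i$ and $i+1$, write $\omega = a \cdot x_i \cdot x_{i+1} \cdot b$ where $a = x_1 \cdots x_{i-1}$ and $b = x_{i+2} \cdots x_k$. Then
\[
\omega - \rho(\omega) = a \cdot x_i \cdot x_{i+1} \cdot b - a \cdot x_{i+1} \cdot x_i \cdot b = a \cdot [x_i, x_{i+1}] \cdot b,
\]
using the bracket \refp{eq:tensor_bracket}. Since $[x_i,x_{i+1}]$ lies in the set generating $\tilde{\mathcal{J}}$ and $\tilde{\mathcal{J}}$ is a two-sided ideal, the element $a \cdot [x_i,x_{i+1}] \cdot b$ belongs to $\tilde{\mathcal{J}}$. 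This settles the base step of the reduction.

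For the inductive/combinatorial bookkeeping: given an arbitrary $\rho \in S_k$, factor it as a product of adjacent transpositions $\rho = \tau_m \cdots \tau_1$, set $\omega_0 = \omega$ and $\omega_j = \tau_j(\omega_{j-1})$, so $\omega_m = \rho(\omega)$. Then
\[
\omega - \rho(\omega) = \sum_{j=1}^{m} \bigl( \omega_{j-1} - \tau_j(\omega_{j-1}) \bigr),
\]
and each summand is in $\tilde{\mathcal{J}}$ by the adjacent-swap computation applied to the word $\omega_{j-1}$ (or a linear combination of words, handled termwise). Hence the sum is in $\tilde{\mathcal{J}}$.

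I do not expect a genuine obstacle here; the only point requiring minor care is making sure the ``ideal generated by the bracket'' is interpreted as the two-sided ideal generated by all elements $[x,y]$ with $x,y \in T(W)$ (equivalently, it is enough to take $x,y \in \mathcal{C}$, since brackets of longer tensors expand into combinations of the $a\cdot[x_i,x_{i+1}]\cdot b$ form anyway), so that left- and right-multiplication by the words $a$ and $b$ stays inside $\tilde{\mathcal{J}}$. Everything else is the standard fact that permutations of a tensor word differ by elements of the commutator ideal.
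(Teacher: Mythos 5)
Your proof is correct and takes essentially the same route as the paper's: both reduce a general permutation to transpositions and rest on the single key computation that one swap produces (word)$\cdot$(bracket)$\cdot$(word), which lies in the two-sided ideal $\tilde{\mathcal{J}}$. The only cosmetic difference is bookkeeping — you telescope over a factorization of $\rho$ into adjacent transpositions, while the paper runs an induction on word length using the swap of the first two letters together with permutations fixing the first position.
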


\begin{proof}
 First notice that for $a,b \in \mathcal{C}$ and a word $\omega \in T(W)$, we have
 \[
 a\cdot b \cdot \omega - b \cdot a \cdot \omega = [a,b] \cdot \omega \in \Tilde{\mathcal{J}}.
 \]
 This implies that the statement is true for words of length $2$. Assume true for $|\omega| < k$ and consider
 \[
 a\cdot \omega - a \cdot \rho(\omega) = a \cdot (\omega - \rho(\omega) ) \in \Tilde{\mathcal{J}}.
 \]
 Any permutation $\sigma$ can be given as a composition of permutations that leave the first position fixed and the permutation that switch the first two positions and leave the rest fixed, hence we get
 \[
 a\cdot \omega - \sigma(a \cdot \omega) \in \Tilde{\mathcal{J}}
 \]
 and the proposition follows by induction.
\end{proof}

\subsection{D-algebra}%\label{sec:D-alg}
The algebra of covariant derivations from a general connection has the structure of an enveloping algebra of a post-Lie algebra, called a D-algebra~\cite{MK_Wright_08}. We need this basic structure in order to understand the free LAT.

For an algebra $(\mathcal{A},\rhd)$, let $(T(\mathcal{A}),\cdot)$ denote the tensor algebra. Let $D(\mathcal{A}) = (T(\mathcal{A}),\cdot,\rhd)$ be defined by extending $\rhd$ to $T(\mathcal{A})$ as
\begin{align}
&x\rhd ({u\dpr v})= {(x\rhd u)\cdot v} + {u\dpr (x\rhd v)}, \qquad \mbox{(Leibniz rule)}\label{eq:DA1}\\
&({x\dpr u})\rhd v= x\rhd(u\rhd v) - (x\rhd u) \rhd v = \text{ass}(x,u,v), \label{eq:DA2}
\end{align}
for all $x\in \mathcal{A}$ and $u,v\in D(\mathcal{A})$. %Let $\mathfrak{g} = \Lie(\mathcal{A})\subset D(\mathcal{A})$ denote the Lie algebra generated from $A$ by $[\noarg,\noarg]$.

\begin{Note} $D(\mathcal{A})$ is introduced in~\cite{MK_Wright_08} with the name \emph{D-algebra}. It forms the enveloping algebra of
the post-Lie algebra $(\mathfrak{g},\rhd,[\noarg,\noarg])$, where $\mathfrak{g} = \Lie(\mathcal{A})\subset D(\mathcal{A})$ and $[ \noarg , \noarg]$ is defined as in equation \eqref{eq:tensor_bracket}. The rich algebraic structure of the D-algebra is studied in~\cite{KMKL_15,MK_Lundervold_2013,MK_Wright_08}.
\end{Note}

\subsection{Trees and the free algebra}\label{sec:Trees}

Let $(\Alg(\mathcal{C}),\rhd)$ denote the free algebra (non-associative, non-commutative) over a set of generators $\mathcal{C}$,
also referred to as a set of \emph{colours}, or sometimes an \emph{alphabet}.
The free magma $\Mag(\mathcal{C})$ is the set of all bracketed expressions in the generators,
\[\Mag(\{\ab\}) = \{\ab,\ab\rhd\ab, (\ab\rhd\ab)\rhd \ab, \ab\rhd(\ab\rhd \ab),\dots \} . \]
We identify $\Mag(\mathcal{C})$ with planar binary trees, where the \emph{leaves} are coloured from $\mathcal{C}$ and the internal nodes represent $\rhd$.
$\Alg(\mathcal{C})$ is the linear space where the trees in $\Mag(\mathcal{C})$ form a~basis, called \emph{the monomial basis}.

Another basis for $\Alg(\mathcal{C})$ we call the \emph{tree basis}, is given by $\OT(\mathcal{C})$, the planar (or ordered) trees where \emph{all} nodes are coloured from $\mathcal{C}$. We define $\OT(\mathcal{C})\subset\Alg(\mathcal{C})$ as
\[\OT(\mathcal{C}) = \bigl\{({t_1\dpr t_2\cdots t_k}) \rhd c \mid t_1,\dots,t_k\in \OT(\mathcal{C}),\, k\geq 0,\, c\in \mathcal{C}\bigr\} .\]
The trees are graphically represented as branches $t_1,\dots,t_k$ attached to the root $c$. For $k=0$ we get $\mathcal{C} \subset \OT(\mathcal{C})$ and build up general trees by recursion:
\[\OT(\{\ab\}) = \bigl\{\ab,\aabb,\aaabbb,\aababb,\dots,\overrightarrow{\aabaabbb}, \overrightarrow{\aaabbabb},\dots\bigr\} ,\]
where the arrow indicates that the branches are ordered. The arrow is omitted in the graphical presentation if the tree is symmetric with respect to branch permutations.
We can rewrite from tree basis to monomial basis using~\refp{eq:DA1} and~\refp{eq:DA2}, e.g.,
\[\overrightarrow{\aabaabbb} = \text{ass}\bigl(\ab,\aabb,\ab\bigr) = \text{ass}(\ab,\ab\rhd\ab,\ab) = \ab\rhd((\ab\rhd\ab)\rhd\ab)-(\ab\rhd(\ab\rhd\ab))\rhd\ab .\]
In the tree basis, the product in $\Alg(\mathcal{C})$ is expressed as \emph{left grafting}~\cite{Al-Kaabi_14, MK_Wright_08}. For $t,t_1,\dots,t_k\in \OT(\mathcal{C})$, $c\in \mathcal{C}$, \refp{eq:DA1} and~\refp{eq:DA2} give
\begin{gather*}
 t\rhd(({t_1\dpr t_2\cdots t_k})\rhd c) = {(t\dpr t_1\dpr t_2\cdots t_k})\rhd c + {((t\rhd t_1)\dpr t_2\cdots t_k})\rhd c
+\cdots \\ \hphantom{ t\rhd(({t_1\dpr t_2\cdots t_k})\rhd c) =}{}
+({t_1\dpr t_2\cdots (t\rhd t_k)})\rhd c,
\end{gather*}
as shown in this example
\[\AB\rhd \aababb = \ord{\aABababb}+ \ord{\aaABbabb} + \ord{\aabaABbb} .\]

Below, in Corollary \ref{prop:permute}, we show that any permutation of branches in a tree can be expressed via an iterated triple bracket. The simplest example is
\[\ord{\pABabq}- \ord{\pabABq} = \text{ass}(\AB,\ab,\pq) -\text{ass}(\ab,\AB,\pq) = [\AB,\ab,\pq] .\]

\subsection{Free pre-Lie}Recall that an algebra is pre-Lie if $[\noarg,\noarg,\noarg]\equiv 0$. Thus, the free pre-Lie is the free algebra divided by this relation:
\begin{Definition}[free pre-Lie] The free pre-Lie algebra is $\preLie(\mathcal{C}):= \Alg(\mathcal{C})/\J$, where $\J$ is the ideal generated by the triple bracket.
\end{Definition}
We revisit the classical characterisation of the free pre-Lie algebra~\cite{Chapoton_and_Livernet_01,Dzhumadildeav_and_Lofwall_02}.
Let $\Tree(\mathcal{C})$ be the non-planar trees over $\mathcal{C}$, obtained by forgetting the ordering of branches in $\OT(\mathcal{C})$ and
$(\T_{\mathcal{C}},\vartr)$ the linear span of $\Tree(\mathcal{C})$ with product $\vartr$ being grafting without branch ordering, as
\[\AB\vartr \aababb = {\aABababb}+ 2{\aaABbabb}.\]
In this non-planar case, the enveloping algebra is the symmetric algebra $S(\T_{\mathcal{C}})$.
$S(\T_{\mathcal{C}})$ acts on~$\T_{\mathcal{C}}$ as~\refp{eq:DA1} and~\refp{eq:DA2}, forgetting the ordering.
Define the projection $\pi\colon D(\Alg(\mathcal{C}))\rightarrow S(\T_{\mathcal{C}})$ by forgetting the ordering. This is a homomorphism for both products $\rhd$ and $\cdot$, and is uniquely defined from the universal property of $D(\Alg(\mathcal{C}))$. The following result is a corollary to Proposition~\ref{prop:perm}.

\begin{Corollary}\label{prop:permute} For any $\omega \in D(\Alg(C))$, $|\omega|=k$, $r\in \Alg(\mathcal{C})$ and any permutation $\rho\in S_k$, we~have
\[
\omega\rhd r - \rho(\omega)\rhd r \in \J.
\]
\end{Corollary}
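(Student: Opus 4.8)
The plan is to reduce the statement about left grafting in $\Alg(\mathcal{C})$ to the purely word-level statement already proven in Proposition~\ref{prop:perm}. The key observation is that $\J$, the ideal in $\Alg(\mathcal{C})$ generated by the triple bracket, is exactly the image under the grafting action of the Lie ideal $\Tilde{\J}\subset T(\Alg(\mathcal{C}))$ generated by the commutator~\refp{eq:tensor_bracket}. Indeed, by~\refp{eq:DA2} the triple bracket $[x,y,z]=\text{ass}(x,y,z)-\text{ass}(y,x,z)=(x\dpr y)\rhd z-(y\dpr x)\rhd z=[x,y]\dpr z$ viewed inside $D(\Alg(\mathcal{C}))$, acting on the right on $z$; so the generators of $\J$ are precisely $[x,y]\rhd r$ for $x,y\in\Alg(\mathcal{C})$, $r\in\Alg(\mathcal{C})$, up to the action. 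More generally I would show that $\omega\rhd r\in\J$ whenever $\omega\in\Tilde{\J}$: write $\omega=\sum u_i\dpr[a_i,b_i]\dpr v_i$, and use the Leibniz rule~\refp{eq:DA1} repeatedly to move the grafting of $u_i$ inside, reducing everything to terms of the form $w\rhd([a,b]\dpr v)\rhd r$-type expressions that lie in $\J$ because $\J$ is an ideal for $\rhd$ and $[a,b]\dpr v$ acts as a triple-bracket-type operator. This is the step I expect to require the most care: one has to check that applying~\refp{eq:DA1} and~\refp{eq:DA2} does not take us outside $\J$, which amounts to verifying that $\J$ is stable under the full $D(\Alg(\mathcal{C}))$-module structure, not just under right multiplication by single colours.

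Granting that $\omega\in\Tilde{\J}\implies\omega\rhd r\in\J$, the corollary follows immediately: by Proposition~\ref{prop:perm}, $\omega-\rho(\omega)\in\Tilde{\J}$ for any word $\omega$ of length $k$ and any $\rho\in S_k$, and then $\omega\rhd r-\rho(\omega)\rhd r=(\omega-\rho(\omega))\rhd r\in\J$. The general case of $\omega\in D(\Alg(\mathcal{C}))$ of homogeneous degree $k$ follows by linearity, since such $\omega$ is a linear combination of words and $\rho$ acts linearly.

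An alternative, more self-contained route avoids the ideal characterisation and mirrors the inductive proof of Proposition~\ref{prop:perm} directly in the grafting picture. First I would treat $k=2$: for $a,b\in\Alg(\mathcal{C})$, the example $\ord{\pABabq}-\ord{\pabABq}=[\AB,\ab,\pq]$ in the text shows $(a\dpr b)\rhd r-(b\dpr a)\rhd r=[a,b,r]\in\J$, which is the transposition case. Then, using the Leibniz rule~\refp{eq:DA1} in the form $a\dpr\omega\rhd r=a\rhd(\omega\rhd r)-(a\rhd\omega)\rhd r$ — so that $a\rhd(\omega\rhd r)=(a\dpr\omega)\rhd r+(a\rhd\omega)\rhd r$ — I would show that if the claim holds for all words of length $<k$ then $a\dpr\omega\rhd r-a\dpr\rho(\omega)\rhd r=a\rhd((\omega-\rho(\omega))\rhd r)-(a\rhd(\omega-\rho(\omega)))\rhd r$, where $(\omega-\rho(\omega))\rhd r\in\J$ by the inductive hypothesis and $\J$ is an ideal, and the second term lies in $\J$ because $a\rhd(\omega-\rho(\omega))$ is a sum of words each differing from a permutation by something that is handled by induction. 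Finally, as in Proposition~\ref{prop:perm}, any $\sigma\in S_k$ is a composite of permutations fixing the first position and the transposition of the first two positions, so the two cases just established generate everything by induction. The main obstacle in either approach is the same bookkeeping point: one must be disciplined about the fact that the Leibniz rule trades a permutation on a longer word for a permutation on shorter words plus genuine triple-bracket terms, and confirm that $\J$ — being an ideal for the product $\rhd$ — absorbs all of these; once that is set up, the induction is routine.
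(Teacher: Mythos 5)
Your first route is essentially the paper's own proof: invoke Proposition~\ref{prop:perm} to get $\omega-\rho(\omega)\in\Tilde{\mathcal{J}}$, write this as a combination of terms $u\cdot[y,z]\cdot v$, and use~\refp{eq:DA1},~\refp{eq:DA2} together with $[y,z]\rhd w=[y,z,w]$ to land in $\J$. The argument is correct; the alternative inductive route you sketch is a reasonable variant but not needed.
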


\begin{proof}
 By Proposition \ref{prop:perm}, we have $\omega - \rho(\omega) \in \Tilde{\mathcal{J}} \subset D(\Alg(\mathcal{C}))$. Then $\omega -\rho(\omega)$ must be a~linear combination of elements on the form $u\cdot [y,z]\cdot v$ for $u,v \in D(\Alg(\mathcal{C}))$ and $y,z \in \Alg(\mathcal{C})$. By~(\ref{eq:DA1}) and~(\ref{eq:DA2}), we may rewrite $(u \cdot [y,z] \cdot v) \rhd r$ into something only depending on $\rhd$, and by the observation
 \begin{equation*}
 [y,z] \rhd w = [y,z,w], \qquad \forall \ y,z,w \in \Alg(\mathcal{C}),
 \end{equation*}
 it follows that $\omega \rhd r - \rho(\omega) \rhd r \in \J$.
\end{proof}

\begin{Theorem}[free pre-Lie algebra]\label{th:pre-Lie}
\begin{equation*}\begin{tikzcd}
0\arrow[r] & \J \arrow[r, "\iota"] & \Alg(\mathcal{C}) \arrow[r, "\pi"]& \T_C \arrow[r] &0
\end{tikzcd}%\label{eq:FPLexact}
\end{equation*}
is a short exact sequence, hence $\preLie(\mathcal{C}) = (\T_{\mathcal{C}},\vartr)$.
\end{Theorem}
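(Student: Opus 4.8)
The plan is to show exactness of the sequence at each spot and then read off the isomorphism. Exactness at $\J$ is immediate since $\iota$ is the inclusion of the ideal, hence injective. Exactness at $\T_C$ amounts to surjectivity of $\pi$, which is clear: every non-planar tree lifts to (any of) its planar representatives, and $\pi$ sends that representative to it. The content of the theorem is exactness at $\Alg(\mathcal{C})$, i.e.\ $\ker\pi = \J$.

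For the inclusion $\J \subseteq \ker\pi$, I would argue that $\pi$ kills every triple bracket. Since $\pi$ is an algebra homomorphism for $\rhd$, it suffices to note that $\pi$ maps the associator in $\Alg(\mathcal{C})$ to the associator in $S(\T_{\mathcal{C}})$ acting on $\T_{\mathcal{C}}$; but the target product $\vartr$ (grafting without ordering) is \emph{pre-Lie}, its triple bracket vanishes identically, so $\pi([x,y,z]) = [\pi x,\pi y,\pi z] = 0$. As $\J$ is the ideal generated by such elements and $\pi$ is a homomorphism for both products, $\J \subseteq \ker\pi$.

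The substantial direction is $\ker\pi \subseteq \J$, equivalently: if two elements of $\Alg(\mathcal{C})$ have the same image under ``forget the branch ordering'', their difference lies in $\J$. Here I would use the tree basis $\OT(\mathcal{C})$ together with Corollary~\ref{prop:permute}. Concretely, I expand an element of $\ker\pi$ in the tree basis; $\pi$ identifies two ordered trees precisely when one is obtained from the other by permuting branches at some nodes (recursively), and the multiplicity with which a non-planar tree appears is the orbit size. So modulo $\J$ — using Corollary~\ref{prop:permute}, applied at the root and then recursively inside each branch, to replace any ordered tree by any reordering of its branches — every element of $\Alg(\mathcal{C})$ is congruent to a canonical representative, one chosen tree per non-planar isomorphism class, with the correct rational coefficient. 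Then $\pi$ restricted to the span of these canonical representatives is a linear bijection onto $\T_{\mathcal{C}}$; an element of $\ker\pi$, reduced to this form, must therefore be $0$ modulo $\J$. This gives $\ker\pi = \J$.

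The main obstacle is the bookkeeping in that last step: one must be careful that the reduction to canonical representatives via Corollary~\ref{prop:permute} is well-defined (independent of the order in which branch-permutations are applied, and compatible with the recursion into subtrees) and that the resulting map on canonical representatives is genuinely injective into $\T_{\mathcal{C}}$ with no further relations hiding in $\J$. A clean way to package this is to observe that $\pi$ admits a linear section $s$ sending each non-planar tree to a chosen planar lift, that $\Alg(\mathcal{C}) = \Img(s) \oplus \J$ as vector spaces (spanning by Corollary~\ref{prop:permute}, directness because $\pi\circ s = \id$ and $\pi(\J)=0$), and hence $\ker\pi = \J$. Once exactness is established, $\preLie(\mathcal{C}) = \Alg(\mathcal{C})/\J \cong \T_{\mathcal{C}}$ as vector spaces, and since $\pi$ is a homomorphism for $\rhd$ (inducing $\vartr$ on the quotient), this is an isomorphism of algebras $(\T_{\mathcal{C}},\vartr)$.
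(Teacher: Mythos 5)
Your proposal is correct and follows essentially the same route as the paper: $\J\subseteq\ker\pi$ because $\pi$ is a homomorphism killing the triple bracket, and $\ker\pi\subseteq\J$ because any element of the kernel is a sum of differences of planar trees related by branch permutations, which lie in $\J$ by Corollary~\ref{prop:permute}. Your extra packaging via a linear section $s$ with $\Alg(\mathcal{C})=\Img(s)\oplus\J$ is a more careful write-up of the same idea, which the paper states in one line.
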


\begin{proof} Since $\pi$ is a homomorphism for $\rhd$ and $\pi([x,y,z])=\pi\bigl((x\dpr y)\rhd z - (y\dpr x)\rhd z\bigr) = 0$, we must have
$\pi(\J)=0$. If $\pi(a) = 0$, then $a$ must be a sum of differences of equivalent trees with permuted branches, and Corollary~\ref{prop:permute} implies $a\in \J$, so the sequence is exact.
\end{proof}

\subsection{Free Lie admissible triple algebras}
The free Lie admissible triple algebra is the quotient $\LAT(\mathcal{C}) = \Alg(\mathcal{C})/\I$, where $\I$ is the two sided ideal generated by~\refp{eq:LAT1} and~\refp{eq:LAT2}. Let $H(S)$ denote a Hall basis for the Lie triple system generated by $S$, henceforth called the ``Hall triple set'', see Definition~\ref{def:H3}.

\begin{Theorem}\label{th:LATbasis}
A basis for $\LAT(\mathcal{C}) = \Alg(\mathcal{C})/\I$ is the smallest subset $\mathcal{B} \subset \Alg(\mathcal{C})$ such that
\[\mathcal{B} = H(S),\]
where
 \[S= \{\s(b_1\cdot b_2\cdots b_k)\rhd c \mid b_1,\dots,b_k\in \mathcal{B},\, c\in \mathcal{C},\, k\geq 0\}\]
are symmetrised trees of basis elements. \end{Theorem}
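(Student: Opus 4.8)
The plan is to prove separately that the elements of $\mathcal{B}$ span $\LAT(\mathcal{C})$ and that their images there are linearly independent; together these say that the composite $\spn\mathcal{B}\hookrightarrow\Alg(\mathcal{C})\twoheadrightarrow\LAT(\mathcal{C})$ is an isomorphism, i.e.\ $\mathcal{B}$ is a basis. Everything is graded by the number of generators occurring (the \emph{degree}), and the recursion $\mathcal{B}=H(S)$ is degree-increasing: a symmetrised tree $\s(b_1\cdots b_k)\rhd c$ has degree $|b_1|+\cdots+|b_k|+1$, strictly above each $|b_i|$, and a Hall triple bracket $[x,y,z]$ has degree $|x|+|y|+|z|$, strictly above each of $|x|,|y|,|z|$. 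Hence $\mathcal{B}$ is well defined as the union of finite graded pieces built degree by degree, and $\mathcal{C}\subseteq S\subseteq H(S)=\mathcal{B}$.

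\emph{Spanning.} I would show by induction on degree that every tree basis element $(t_1\cdots t_k)\rhd c$ of $\OT(\mathcal{C})$ lies modulo $\I$ in $\spn\mathcal{B}$. Since $\I$ is a two-sided ideal and left grafting is expressed purely through $\rhd$ by \refp{eq:DA1}--\refp{eq:DA2}, each branch $t_i$ may first be replaced by a combination of lower-degree elements of $\mathcal{B}$, reducing to $\omega=(b_1\cdots b_k)\rhd c$ with $b_i\in\mathcal{B}$, $c\in\mathcal{C}$. Then symmetrise the branches: expanding $\omega-\s(b_1\cdots b_k)\rhd c$ as an average of differences $\bigl((b_1\cdots b_k)-\sigma(b_1\cdots b_k)\bigr)\rhd c$ and writing $\sigma$ through adjacent transpositions, the argument behind Proposition \ref{prop:perm} and Corollary \ref{prop:permute} --- now run modulo $\I$ rather than $\J$, using the identity $[y,z]\rhd w=[y,z,w]$ from the proof of that corollary --- turns the correction into a combination of triple brackets $[b_i,b_j,w]$, where $w$ is a tree on the remaining branches over the root $c$, plus contributions in which a triple bracket sits inside a branch and is pulled to the top by the derivation relation \refp{eq:LAT2}. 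A secondary induction on the number of branches collapses the surviving tree to $\s(b_1\cdots b_k)\rhd c\in S\subseteq\mathcal{B}$, and every triple-bracket term produced has all its entries in the Lie triple system generated by $S$; by the Hall basis for Lie triple systems (Proposition \ref{prop:LTS_Hall}) together with the LTS axioms of Lemma \ref{lemma:LTS}, such a bracket rewrites over $H(S)=\mathcal{B}$. This proves spanning.

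\emph{Linear independence.} This is the main obstacle, since it amounts to confluence of the rewriting above, i.e.\ uniqueness of the $\mathcal{B}$-normal form. I would get it by constructing the quotient explicitly: equip the vector space $\spn\mathcal{B}$ with the product $\rhd$ sending $(b,b')$ to the normal form of the corresponding tree, show this is \emph{well defined} and verifies \refp{eq:LAT1}--\refp{eq:LAT2}, and invoke the universal property to obtain a homomorphism $\LAT(\mathcal{C})\to(\spn\mathcal{B},\rhd)$ that inverts, on $\mathcal{B}$, the spanning surjection $\spn\mathcal{B}\to\LAT(\mathcal{C})$. Well-definedness splits into two normal-form statements that must be reconciled: the non-planar (symmetric) grafting of $\mathcal{B}$-branches onto a $\mathcal{C}$-root, exactly as in the free pre-Lie computation of Theorem \ref{th:pre-Lie}, and the Hall normal form for the internal triple brackets. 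Reconciling them is what the auxiliary ``ordered symmetric/skew-symmetric'' basis for the free tensor algebra (Theorem \ref{th:OSBB}) and the derived basis for $\Alg(\mathcal{C})$ in Section \ref{sec:bases} are for: they give a PBW-type unique factorisation of words, hence of trees, into a symmetric part over a Hall-reduced Lie-triple-system alphabet, and one verifies that the relations \refp{eq:LAT1}--\refp{eq:LAT2} do nothing to this factorisation beyond identifying skew-symmetrised associators with Lie triple brackets. (Equivalently, once spanning is known, it is enough to match graded dimensions of $\spn\mathcal{B}$ and $\LAT(\mathcal{C})$ computed from the presentation, but that repackages the same difficulty.)

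I expect the genuinely delicate point to be exactly this compatibility --- between the symmetric grafting onto the root and the Hall normalisation of the internal triple brackets --- which is why the proof is deferred until after the combinatorial Theorem \ref{th:OSBB} and the Lie triple system Hall basis of Proposition \ref{prop:LTS_Hall}.
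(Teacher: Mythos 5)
Your two-part architecture (spanning by rewriting modulo $\I$, independence by exhibiting the quotient explicitly) is sound, and the spanning half is essentially the paper's proof: the rewriting you describe --- reduce branches to $\mathcal{B}$ by induction on degree, symmetrise the forest over the root, push any forest sitting over a triple bracket into the bracket's arguments via \refp{eq:LAT2}, then Hall-normalise the resulting brackets via Proposition~\ref{prop:LTS_Hall} --- is exactly the composite $\phi=\pi_H\circ\pi_D$ constructed in Section~\ref{sec:proof}, together with the statement $x-\phi(x)\in\I$ (Propositions~\ref{id-D_in_I} and~\ref{id-h_in_I}, resting on Lemma~\ref{Lemma_D_property}). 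You also correctly identify the role of Theorem~\ref{th:OSBB} and Section~\ref{sec:bases}: they produce an ordered basis $\mathcal{D}$ of $\Alg(\mathcal{C})$ containing $\mathcal{B}$ (via the triangular automorphism $\psi$), which simultaneously makes the normal-form map well defined as a linear map and yields Corollary~\ref{cor:B_lin_ind}, the linear independence of $\mathcal{B}$ already inside $\Alg(\mathcal{C})$.

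Where you diverge is in how independence in the quotient is closed. You propose to endow $\spn\{\mathcal{B}\}$ with the normal-form product, verify \refp{eq:LAT1}--\refp{eq:LAT2} for it, and invoke the universal property. The paper never verifies the axioms on $\spn\{\mathcal{B}\}$; instead it proves $\ker\phi=\I$ directly. Since $x-\phi(x)\in\I$ for every $x$, it suffices that no nonzero element of $\Img(\phi)$ lies in $\I$, and this is argued from shape alone: elements of $\Img(\phi)$ contain no sub-expression of the form $(y_1\cdots y_n)\rhd[r_1,r_2,r_3]$ and no non-Hall triple bracket, whereas every nonzero element of $\I$ must contain a factor of one of the generating shapes (Lemma~\ref{lemma:ker}, Step~1). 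This sidesteps the axiom verification entirely, and that is the real payoff of the paper's packaging. Your route would also work in principle, but the sentence ``one verifies that the relations do nothing to this factorisation'' is precisely the confluence statement that has to be proved, and it is not easier in your formulation than in the paper's; as written, your proposal names the delicate point accurately but does not discharge it. Your parenthetical fallback of matching graded dimensions is not available either, since the graded dimension of $\LAT(\mathcal{C})$ is not known independently of the theorem.
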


The proof will be presented in Section \ref{sec:proof}. We present the basis explicitly. For $\mathcal{C}=\{\ab\}$, the basis elements are up to order 5:
\begin{gather*} \mathcal{B} = \bigl\{\ab,{\color{magenta}\aabb},{\color{green}\aababb,\aaabbb},{\color{red}\aabababb,\aabaabbb,\aaababbb,\aaaabbbb, [\aabb,\ab,\ab]},
\\ \hphantom{\mathcal{B} = \{}{}
{\color{blue}\aaabababbb, \aaabaabbbb, \aaaababbbb, \aaaaabbbbb, [\aabb,\ab,\ab]\tr\ab, \aabaababbb, \aabaaabbbb,\aaabbaabbb, \aababaabbb, \aababababb,
[\aaabbb,\ab,\ab],[\aababb,\ab,\ab],[\aabb,\ab,\aabb]},
\dots \bigr\} ,
\end{gather*}
where non-planar trees are identified with the corresponding symmetrised sum of planar trees
\[\aabaabbb := \frac12 \bigl( \ord{\aaabbabb}+\ord{\aabaabbb}\bigr), \qquad \aababaabbb = \frac13 \bigl(\ord{\aaabbababb}+\ord{\aabaabbabb}+\ord{\aababaabbb}\bigr), \qquad\mbox{etc.}\]
The dimensions of the first graded components are given as
$1, 1, 2, 5, 13, 34, 96, 263, \dots$.

From order 5 and higher, we get trees where the \emph{leaves} can be from the Hall triple set:
\[\bigl[\aabb,\ab,\ab\bigr]\rhd \ab,\qquad \mathfrak{s}\bigl(\ab \cdot \bigl[\aabb,\ab,\ab\bigr]\bigr)\rhd \ab,\qquad \mbox{etc.}\]
However, due to~\refp{eq:LAT2}, we never get trees with internal nodes from the Hall triple set.

{\it Summary}: \emph{The basis $\mathcal B$ for $\LAT(\mathcal{C})$ consists of
\begin{itemize}\itemsep=0pt
\item Trees with internal nodes from $C$ and leaves from the Hall triple set of trees.
\item The Hall triple set of trees.
\end{itemize}}

\section{Hall basis for free Lie triple system}\label{sec:Hall}

Recall that an LTS is a vector space with a tri-linear bracket satisfying~\refp{eq:LTS1}--\refp{eq:LTS3}.
We construct a Hall basis for $\LTS(\mathcal{C})$, the free LTS over a set $\mathcal{C}$.
A classical result~\cite{Jacobson1951,Loos1} states that an LTS can always be embedded in a Lie algebra $(\mathfrak{g},[\noarg,\noarg])$ as a subspace
closed under the triple bracket $[a,b,c] := [[a,b],c]$.
In particular, $\LTS(\mathcal{C})$ can be embedded in $\Lie(\mathcal{C})$, the free Lie algebra~\cite{Reutenauer1993free} over $\mathcal{C}$.
$\Lie(\mathcal{C})$ has a $\ZZ_2$ grading where Lie monomials with an odd (resp.\ even) degree have grade~$1$ (resp.\ $0$). The odd monomials span $\LTS(\mathcal{C})$. In particular, any odd monomial can be rewritten as a sum of iterated triple brackets.
For example, using the Jacobi rule and skew symmetry of $[\noarg,\noarg]$, we find
\begin{equation*}
[[[a,b],[c,d]],e] = [a,b,[c,d,e]] - [c,d,[a,b,e]] .%\label{eq:triplerewrite}
\end{equation*}
Hence any Hall basis for $\Lie(\mathcal{C})$ yields a basis for $\LTS(\mathcal{C})$, by restricting to the odd monomials. However, such bases will in general contain odd monomials which are not simple iterated triple brackets, but must be rewritten as sums of triple brackets.
For example, the Lyndon basis~\cite{Reutenauer1993free} for $\Lie(\{a,b,c,d,e\})$ contains the odd monomial $[[[a,e],[c,d]],b]$.

To obtain a basis for $\LTS(\mathcal{C})$ consisting of only triple brackets, we propose a Hall-type basis obtained directly on $\LTS(\mathcal{C})$, independent of an embedding in $\Lie(\mathcal{C})$. Let $(\Mag_3(\mathcal{C}),[\noarg,\noarg,\noarg])$ denote the \emph{triple magma} over $\mathcal{C}$, i.e., all possible iterated triple brackets formed from $\mathcal{C}$. This can be represented as planar ternary trees with leaves coloured by $\mathcal{C}$. Let $|\noarg|\colon\Mag_3(\mathcal{C})\rightarrow \ZZ$ be the grading $|c|=1$ for $c\in \mathcal{C}$ and $|[a,b,c]| = |a|+|b|+|c|$ for general $a$, $b$, $c$. Let $\Alg_3(\mathcal{C})$ denote the vector space spanned by $\Mag_3(\mathcal{C})$ over some field $K$, and let $\mathcal{M} \subset \Alg_3(\mathcal{C})$ be the ideal generated by the Lie triple relations \refp{eq:LTS1}--\refp{eq:LTS3}.

\begin{Definition}[Hall triple set]\label{def:H3}
We define the Hall triple set $H$ as
\begin{itemize}\itemsep=0pt
\item $\mathcal{C}\subset H\subset \Mag_3(\mathcal{C})$.
\item $H$ has a total ordering $<$ such that $|u|<|v|$ implies $u<v$. The ordering on the same grade is arbitrary, e.g., a lexicographical rule.
\item $[u,v,w]\in H$ if and only if $u,v,w\in H$, $u>v\leq w$ and one of the following hold:
\begin{itemize}\itemsep=0pt
 \item either $u\in \mathcal{C}$,
 \item or $u = [a,b,c]$ where $c\leq v$.
\end{itemize}
\end{itemize}
\end{Definition}

\begin{Proposition}\label{prop:LTS_Hall}
$H$ is a basis for the free Lie triple system $\LTS(\mathcal{C})$.
\end{Proposition}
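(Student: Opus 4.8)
The plan is to mimic the classical proof that a Hall set is a basis for a free Lie algebra, transported to the setting of Lie triple systems via the $\ZZ_2$-graded embedding $\LTS(\mathcal{C}) \hookrightarrow \Lie(\mathcal{C})$. First I would establish that $H$ is a \emph{spanning set}: every element of $\Mag_3(\mathcal{C})$ can be rewritten, modulo $\mathcal{M}$, as a $K$-linear combination of elements of $H$. This is a straightforward induction on the grading $|\noarg|$, using the rewriting rules coming from \refp{eq:LTS1}--\refp{eq:LTS3}. Given $[u,v,w]$ with $u,v,w$ already expressed in $H$, skew-symmetry \refp{eq:LTS1} lets us arrange $u > v$; if $v > w$ we use \refp{eq:LTS2} (together with \refp{eq:LTS1}) to swap roles of $w$ with $u$ or $v$; and if $u = [a,b,c]$ with $c > v$, the ``Jacobi-type'' relation \refp{eq:LTS3} (applied as in the displayed identity $[[[a,b],[c,d]],e] = [a,b,[c,d,e]] - [c,d,[a,b,e]]$ in the text, or rather its LTS-internal analogue expressing $[[a,b,c],v,w]$ in terms of brackets with $c$ pushed inward) reduces to strictly ``smaller'' configurations in a suitable well-founded order. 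One must check that this rewriting terminates; the standard device is to order monomials by a multiset/lexicographic measure that strictly decreases under each rewrite, exactly as in the Hall-basis proof for Lie algebras.

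The more delicate half is \emph{linear independence} of $H$ in $\LTS(\mathcal{C}) = \Alg_3(\mathcal{C})/\mathcal{M}$. Here I would use the embedding rather than argue directly. Compose the natural map $\Alg_3(\mathcal{C}) \to \LTS(\mathcal{C})$ with the inclusion $\LTS(\mathcal{C}) \hookrightarrow \Lie(\mathcal{C})$ given by $[a,b,c] \mapsto [[a,b],c]$; this is well-defined because the Lie triple relations \refp{eq:LTS1}--\refp{eq:LTS3} hold for the double bracket $[[\noarg,\noarg],\noarg]$ in any Lie algebra, so $\mathcal{M}$ lies in the kernel. It therefore suffices to show the images of the elements of $H$ are linearly independent in $\Lie(\mathcal{C})$. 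The key point is that the image of a Hall-triple monomial $[u,v,w]$ of grade $d$ is an \emph{odd} Lie element of degree $d$, and I claim each such image can be written as a Hall-basis element of $\Lie(\mathcal{C})$ plus a combination of strictly lower-in-order Hall monomials of the same degree --- i.e.\ the change-of-basis matrix from (images of) $H$ to a genuine Hall basis of the odd part of $\Lie(\mathcal{C})$ is unitriangular. Concretely: choose a Hall order on $\Lie(\mathcal{C})$ refining grade, pick a compatible Hall basis; then $[[u,v],w] = $ (a Hall word) $+$ (lower terms), and the recursive structure of Definition~\ref{def:H3} (in particular the condition ``$u = [a,b,c]$ with $c \le v$'', which mirrors the standard Hall condition that in $[[x,y],z]$ one needs $y \le z$) guarantees the leading Hall word is a bijective function of the triple $(u,v,w) \in H$. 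Since the odd monomials span $\LTS(\mathcal{C})$ inside $\Lie(\mathcal{C})$ and $|H|$ in each grade matches $\dim$ of that graded piece (forced by the spanning half together with unitriangularity), independence follows, and the embedding is automatically injective as a byproduct.

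The main obstacle I expect is the bookkeeping in the linear-independence step: setting up an order on ternary-tree monomials and on Lie monomials simultaneously so that (a) the Hall conditions of Definition~\ref{def:H3} match up correctly with the Hall conditions on $\Lie(\mathcal{C})$ under $[u,v,w]\mapsto[[u,v],w]$, and (b) the ``lower terms'' produced by the Jacobi identity when normalising $[[u,v],w]$ into a Hall basis of $\Lie(\mathcal{C})$ are genuinely lower in the chosen order. An alternative, cleaner route --- which I would pursue if the direct order-matching gets unwieldy --- is a dimension count: prove the spanning claim, then show $|H_d| = \dim \LTS(\mathcal{C})_d$ for every grade $d$ by an independent computation of the latter (e.g.\ via the known generating function for the odd part of the free Lie algebra, $\sum_d \dim\LTS(\mathcal{C})_d\, t^d = \tfrac12\bigl(\prod_{k\,\mathrm{odd}}(1-t^k)^{-\ell_k} - \prod_{k\,\mathrm{even}}(1-t^k)^{-\ell_k}\bigr)$-type formula, with $\ell_k$ the number of degree-$k$ Lyndon words), checking that the Hall recursion produces exactly this count. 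Either way the essential content is the same classical argument; only the combinatorial matching is new.
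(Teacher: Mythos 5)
Your spanning argument follows the same route as the paper: induct on the grading, use \refp{eq:LTS1}--\refp{eq:LTS2} to move the smallest entry to the middle, and use \refp{eq:LTS3} to handle the case $u=[a,b,c]$ with $c>v$. However, you dismiss termination as following from ``a multiset/lexicographic measure that strictly decreases under each rewrite, exactly as in the Hall-basis proof for Lie algebras,'' and this is precisely where the real work lies. After applying the rewrite $[[a,b,c],v,w]\mapsto [a,b,[c,v,w]]-[c,[a,b,v],w]-[c,v,[a,b,w]]$ one must re-normalise the second and third terms with step (1), which can move the large inner bracket $[a,b,v]$ or $[a,b,w]$ back into the first slot, so the grade of the first argument does not monotonically decrease and no obvious lexicographic measure drops at every step. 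The paper's termination argument is not a decreasing-measure argument at all: it shows that the Hall elements $[a,b,h_i]$ produced by the first term can never reappear in the rewriting of the other two terms, so each round permanently eliminates some Hall words. You would need to either exhibit an actual well-founded measure (and verify it against the problematic second and third terms) or reproduce an argument of this kind.

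For linear independence you take a genuinely different route from the paper, which argues intrinsically that the rewriting endows $\operatorname{span}(H)$ with an LTS structure, so that $\operatorname{span}(H)$ realises the quotient $\Alg_3(\mathcal{C})/\mathcal{M}$. Your plan --- push $H$ into $\Lie(\mathcal{C})$ via $[a,b,c]\mapsto[[a,b],c]$ and show unitriangularity against a Hall basis of the odd part --- is plausible in principle (the classical fact that the free LTS is the odd part of the free Lie algebra is available and cited in the paper), but the central claim is left entirely unproved: the image of a Hall triple element is an iterated double bracket whose arguments are themselves sums of Lie monomials, the Hall condition ``$u=[a,b,c]$ with $c\le v$'' does not transparently correspond to the Hall condition on $\Lie(\mathcal{C})$ under this map, and the asserted bijection between Hall triple elements and leading Hall words is exactly the content you would need to establish. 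Your fallback via the count $|H_d|=\dim\LTS(\mathcal{C})_d$ is likewise only a statement of what must be verified; extracting this enumeration from the recursion in Definition~\ref{def:H3} and matching it to the odd Witt numbers is a nontrivial computation that is not carried out. As written, both halves of the proposal stop short of the steps that actually carry the difficulty.
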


\begin{proof}
First, we show by induction in the grading that any monomial in $\Alg_3(\mathcal{C})$ can be expressed as a sum of the Hall elements added with terms contained in $\mathcal{M}$.
We introduce a~rewriting based on~\refp{eq:LTS1}--\refp{eq:LTS3}.
Given a monomial $[u,v,w]$. As induction hypothesis, we assume~$u,v,w\in H$.
\begin{enumerate}\itemsep=0pt
\item[(1)] Using~\refp{eq:LTS1} and~\refp{eq:LTS2}, rewrite $[u,v,w]$ so that the smallest element is in the middle:
 \begin{enumerate}\itemsep=0pt
 \item If $u=v$, $[u,v,w]\mapsto 0$.
 \item If $u<v$, $[u,v,w]\mapsto -[v,u,w]$.
 \item If $u>v> w$, $[u,v,w] \mapsto [u,w,v]-[v,w,u]$.
 \end{enumerate}
 After this, we have obtained terms of the form $[u,v,w]$, where $u>v\leq w$. If $u\in \mathcal{C}$ or $u=[a,b,c]$, where $c\leq v$ this is Hall, and we stop.
\item[(2)] For $[u,v,w]$, where $u>v\leq w$, $u=[a,b,c]$ and $c>v$, rewrite using~\refp{eq:LTS3}:
\begin{equation}
[[a,b,c],v,w] \mapsto [a,b,[c,v,w]] - [c,[a,b,v],w] - [c,v,[a,b,w]]. \label{eq:rewrite2}
\end{equation}
Repeat (1) and (2) on the three terms in~\refp{eq:rewrite2} until all terms are Hall.
\end{enumerate}
We need to show that this rewriting will always stop.

Consider the first term on the right in~\refp{eq:rewrite2}, $[a,b,[c,v,w]]$.
If $c\in \mathcal{C}$ or if $c=[c',c'',c''']$ with $c'''\leq v$ then $[a,b,[c,v,w]]\in H$. To see this, recall we have assumed $u=[a,b,c]\in H$, so $a = [a',a'',a''']$ with $a'''>b$ is not possible. Furthermore, $b<a$ and $b\leq c$ implies $b< [c,v,w]$, and we have $c>v\leq w$, which shows that it is Hall. If $c=[c',c'',c''']$ with $c> v$, we use the induction hypothesis to write $[c,v,w]$ as a sum of
Hall elements $h_i\in H$, $h_i>b$. This yields Hall elements $[a,b,h_i]\in H$.

Now we continue to rewrite the two rightmost terms in~\refp{eq:rewrite2}. We will argue that the Hall elements
$[a,b,h_i]\in H$ produced from the first term cannot reappear in the rewriting of the last two terms. If this is so, each round of rewriting removes some Hall words completely from the remaining terms, and the iteration must eventually stop.

To see that $[a,b,h_i]\in H$ will not reappear, we first consider the second term $[c,[a,b,v],w]$. Rewriting could give
$[[a,b,v],c,w]$ or $[[a,b,v],w,c]$. However, since $v<c$ and $v\leq w$, we will never use~\refp{eq:rewrite2} and get back terms of the form $[a,b,[\dots]]$.

The last term is $[c,v,[a,b,w]]$, where $v<c$ and $v\leq w$, thus $v<[a,b,w]$. The smallest term is $v$ in the middle, so we
will not use rewriting (c)
and get back terms with $a$, $b$ moved to the left, hence $[a,b,[\dots]]$ can not reappear.

This shows that $H$ spans $\LTS(\mathcal{C})$. To see that $H$ is a basis, we note that the rewriting rules guarantee that~\refp{eq:LTS1}--\refp{eq:LTS3} hold for all $a,b,x,y,z\in H$. By tri-linearity, these relations must also hold in the span of $H$, and hence $H$ is a basis for the quotient $\LTS(\mathcal{C}) = \Alg_3(\mathcal{C})/\langle \refp{eq:LTS1},\refp{eq:LTS2},\refp{eq:LTS3}\rangle$.
\end{proof}

\begin{Definition}\label{def:Hall_map}
Given a totally ordered set $\mathcal{C}$, let $\mathfrak{H} \colon \Alg_3 (\mathcal{C}) \rightarrow \Alg_3(\mathcal{C})$ be the linear projection that rewrites any element of $\Mag_3(\mathcal{C})$ into a sum of Hall elements according to the algorithm described in the proof above, i.e., such that $\ker(\mathfrak{H}) = \mathcal{M}$ and $Im(\mathfrak{H}) = \LTS(\mathcal{C})$.
\end{Definition}

For $\LTS(\{a,b\})$, the Hall triple set up to order 7 is
\begin{align*}
H = \{{}&a, b,
[baa], [bab],
[[baa]aa], [[baa]ab],
[[baa]bb], [[bab]bb], [ba[baa]],
[ba[bab]],
[[[baa]aa]aa], \\ &{} [[[baa]aa]ab], [[[baa]aa]bb],
[[[baa]ab]bb], [[[baa]bb]bb], [[[bab]bb]bb],
[ba[[baa]aa]], \\ &{}
[ba[[baa]ab]],[ba[[baa]bb]], [ba[[bab]bb]],
[ba[ba[baa]]], [ba[ba[bab]]],
[[baa]a[baa]],
\\ &{}
[[baa]b[baa]],
[[bab]b[baa]], [[baa]a[bab]],
[[baa]b[bab]], [[bab]b[bab]], \dots\} .
\end{align*}
The dimensions of the graded components, $2, 2, 6, 18, 56, 186, \dots$ are computed by the classical Witt formula for odd grades, see \href{https://oeis.org/A001037}{A001037}.

\section{OSBB-words: a basis for the free tensor algebra}\label{sec:OSBB}

In this section, consider a vector space $W$ over a field $K$ with a totally ordered basis $\mathcal{C}$, and let~$\prec$ denote the total order on $\mathcal{C}$. Let $T(W)$ be the free tensor algebra over $W$, equipped with the structure and notation introduced in Section \ref{sec:tensor}.

Recall from Section \ref{sec:Trees} that one way to generate the planar rooted trees is to \emph{graft} an \emph{ordered forest} (or a word) onto a \emph{root}. If instead this process is done by only grafting \emph{unordered forests} $\mathfrak{s}(x_1 \cdots x_k)$ onto a root, we would generate the non-planar rooted trees, which is a basis for the free pre-Lie algebra. This algebra is defined by the relation $[\noarg, \noarg, \noarg]=0$, hence the triple-brackets together with non-planar trees should in some sense be sufficient to describe the full free algebra $\Alg(\mathcal{C})$. The triple-bracket can be described by grafting a bracket $[x,y]$ onto a tree; $[x,y]\rhd z = [x,y,z]$.

The motivation to continue this journey of combinatorics on the tensor algebra is to be able to generate a basis for the free algebra using triple-brackets and non-planar trees instead of planar trees, which corresponds to a basis on the free tensor algebra using symmetrized elements and brackets. This alternative basis on the tensor algebra is presented in Theorem \ref{th:OSBB} below, after we have developed some language.
\begin{itemize}\itemsep=0pt
 \item \textit{Symmetric word:} Let $x_1, \dots, x_k$, $k\geq 0$ be elements from the set $\mathcal{C}$. The element $\mathfrak{s}(x_1 \cdots x_k)$ is called a \emph{symmetric word}. Note that the empty word is a symmetric word.
 \item \textit{Symmetric-bracket word:} If $y,z \in \mathcal{C}$, then the element $\mathfrak{s}(x_1 \cdots x_k) \cdot [y,z]$ is called a~\emph{sym\-met\-ric-bracket word}.
 \item \textit{SBB-word} An element $\omega = \omega_1 \cdots \omega_s$ is called a \emph{symmetric-bracket-block word}, or an SBB-word, if $\omega_i$ is a symmetric-bracket word for $1\leq i <s$ and $\omega_s$ is a symmetric word. As long as $\omega_s$ is non-empty, we say that $\omega$ has $s$ \emph{blocks}, where $\omega_i$ is a block of $\omega$ for $1\leq i \leq s$. If~$\omega_s$ is the empty word, $\omega$ has $s-1$ blocks.
 \item \textit{OSB-word:} A symmetric-bracket word $\mathfrak{s}(x_1 \cdots x_k) \cdot [y,z]$ is \emph{ordered} if $z\prec y$ and $z \preceq x_i$ for all $i$, where $\prec$ is the given total order on $\mathcal{C}$. We abbreviate \emph{ordered symmetric-bracket word} to OSB-word.
 \item \textit{OSBB-word:} An element $\omega = \omega_1 \cdots \omega_s$ is called an \emph{ordered symmetric-bracket-block word}, or an OSBB-word, if $\omega$ is an SBB-word, and in addition $\omega_i$ is ordered for $1\leq i <s$.
\end{itemize}

We make a clarifying note on the use of the word \emph{word}. A word is an element of the natural basis of $T(W)$. The list above contains five special types of elements of homogeneous degree in~$T(W)$ which are strictly speaking not words, but since they are closely associated concepts, we have chosen to name them this way. %Continuing, whenever we mention a \emph{word}, if it is not referring specifically to any of the special elements on this list, it will still mean an element of the natural basis of $T(W)$.

\begin{Theorem}\label{th:OSBB}
 Let $\mathcal{C}$ be a basis of some vector space $W$, and let $\prec$ be a total order on $\mathcal{C}$. Then the OSBB-words over $\mathcal{C}$ form a basis of the free tensor algebra $T(W)$.
\end{Theorem}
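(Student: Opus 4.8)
The plan is to establish two facts and combine them: (i) in each homogeneous degree $m$ the number of OSBB-words equals $n^m=\dim W^{\otimes m}$, where $n=|\mathcal{C}|$; and (ii) the OSBB-words span $T(W)$. Every OSBB-word is homogeneous for the tensor grading, so it suffices to work degree by degree, and in the finite-dimensional space $W^{\otimes m}$ a spanning family of size $n^m$ is automatically a basis; hence (i) and (ii) give the theorem.

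For (i) I would use generating functions, graded most cleanly by the multiset $\mu$ of colours (the version graded by total degree is the specialisation $s_c\mapsto t$). Symmetric words contribute $Q=\prod_{c\in\mathcal{C}}(1-s_c)^{-1}$. An OSB-word $\mathfrak{s}(x_1\cdots x_k)\cdot[y,z]$ is determined by its distinguished minimal colour $z$, a colour $y\succ z$, and a multiset of colours $\succeq z$, so summing over $z$ gives $B=\sum_{z}s_z\bigl(\sum_{y\succ z}s_y\bigr)\prod_{c\succeq z}(1-s_c)^{-1}$, and a general OSBB-word is a possibly empty sequence of OSB-words followed by a symmetric word, with generating function $Q/(1-B)$. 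The crux is the closed form $1-B=(1-\sum_c s_c)\,Q$, proved by a short telescoping computation; it yields $Q/(1-B)=(1-\sum_c s_c)^{-1}$, which is exactly the Hilbert series of $T(W)$ graded by multiset, i.e.\ $n^m$ OSBB-words in degree $m$.

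For (ii) I would induct on $m$. Since $w\equiv\mathfrak{s}(w)$ modulo the bracket ideal $\Tilde{\mathcal{J}}$ by Proposition~\ref{prop:perm}, and $\mathfrak{s}(w)$ is itself a single-block OSBB-word, it is enough to write every degree-$m$ element of $\Tilde{\mathcal{J}}$ as an OSBB-combination, which I would do in three moves. (a) Push the bracket of a generator $u\cdot[y',z']\cdot v$ as far left as possible: replacing $u\cdot[y',z']$ by $\mathfrak{s}(u)\cdot[y',z']+(u-\mathfrak{s}(u))\cdot[y',z']$, the first term already has the target shape while the second is a combination of bracket-generators with fewer letters before the bracket, so induction on that number reduces everything to elements $\mathfrak{s}(\mu)\cdot[y,z]\cdot w$ with $z\prec y$. (b) Enforce $z\preceq\mu$: if some colour $a\in\mu$ has $a\prec z$, pull $a$ to the right end of the symmetric block using $\mathfrak{s}(\mu)=\tfrac1{|\mu|}\sum_{b\in\mu}\mathfrak{s}(\mu\setminus b)\cdot b$ and commute it past the bracket via $a\cdot[y,z]=[y,z]\cdot a+[a,[y,z]]$ together with $[a,[y,z]]=[[a,y],z]-[[a,z],y]$; every resulting term either has one fewer colour below $z$ in its block, or carries a bracket whose minimal colour lies strictly below $z$. (c) Once $\mathfrak{s}(\mu)\cdot[y,z]$ is an OSB-word, its trailing word $w$ has degree $<m$, so by the induction hypothesis $w$ is an OSBB-combination, and prepending the OSB block $\mathfrak{s}(\mu)\cdot[y,z]$ to an OSBB-word again gives an OSBB-word.

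The main obstacle is the termination of move (b): commuting a colour past a bracket and re-symmetrising the resulting prefix (which itself spawns further brackets) must be shown to halt. The natural well-founded measure processes blocks from left to right and assigns to the current block the pair (rank of its bracket's minimal colour, number of colours in its symmetric prefix lying below that colour), compared lexicographically; verifying that every rewrite strictly decreases it — in particular that a colour once removed from a block does not return and that no re-symmetrisation lowers the bracket-colour of an earlier block — is the delicate bookkeeping of the proof. An alternative to (ii) is to prove linear independence directly: projecting to $S(W)=T(W)/\Tilde{\mathcal{J}}$ kills every OSBB-word containing a bracket and handles the purely symmetric ones for free, after which one orders the remaining OSBB-words by their lexicographically smallest constituent word and argues by triangularity — but this requires the parallel combinatorial lemma that the leading-word assignment is injective, of comparable difficulty.
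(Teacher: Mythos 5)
Your overall architecture coincides with the paper's: one step shows the OSBB-words span, the other counts them and finds exactly $n^k$ in tensor degree $k$, and the two are combined exactly as you say. Where you genuinely diverge is in the count. The paper sets up a recursion $\alpha_k(n)$ by conditioning on the first block and closes it with a separate binomial identity proved by induction in an appendix. Your generating-function route --- $Q=\prod_{c}(1-s_c)^{-1}$ for symmetric blocks, $B$ for OSB blocks, and the telescoping identity $1-B=\bigl(1-\sum_c s_c\bigr)Q$ giving $Q/(1-B)=\bigl(1-\sum_c s_c\bigr)^{-1}$ --- is a cleaner packaging of the same enumeration; the identity does check out (with $P_i=\prod_{k\ge i}(1-s_{c_k})^{-1}$ one gets $P_1=1+\sum_i s_{c_i}P_i$ and $P_1-P_i=\sum_{k<i}s_{c_k}P_k$, whence $\bigl(1-\sum_j s_{c_j}\bigr)P_1=1-B$), and the multiset grading has the added benefit of making each graded piece finite-dimensional even for infinite $\mathcal{C}$, a case the paper's count does not literally cover.

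For spanning, your moves (a)--(c) are essentially the paper's Step 1 (reduce to SBB-words, then repair the first block with the Jacobi identity, inducting on total length). The one point where your plan as written would not go through is the termination measure for move (b). The pair (rank of the bracket's minimal colour, number of offending prefix colours) does not decrease on all terms the rewrite produces: when you re-symmetrise a prefix such as $\mathfrak{s}(\mu\setminus a)\cdot z$, the correction term lies in $\Tilde{\mathcal{J}}$ and its SBB-decomposition can begin with a block whose bracket is formed from two \emph{large} letters of $\mu$, so the bracket's minimal colour can strictly increase. What saves the argument --- and what the paper's proof uses as its measure --- is that every such term either has a strictly shorter first block (the bracket has moved left) or keeps the block length while strictly lowering the bracket minimum. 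So prepend the first-block length to your lexicographic measure; with the triple (first-block length, bracket-minimum rank, offender count) every rewrite strictly decreases and the rest of your plan is sound.
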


Before we are ready to prove this theorem, we need to prove the weaker claim that any word in $T(W)$ can be written as a linear combination of SBB-words.

\begin{Proposition}\label{cor:SBB-word}
Any word can be rewritten as a sum of SBB-words.
\end{Proposition}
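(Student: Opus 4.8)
The plan is to prove this by induction on the length of the word, rewriting a given word $x_1 \cdots x_k$ into a normal form built of symmetric-bracket words followed by a terminal symmetric word. The fundamental tool is Proposition~\ref{prop:perm}: any word equals any permutation of itself modulo the bracket ideal $\Tilde{\mathcal{J}}$. The idea is to exploit this freedom to collect letters greedily from the left. Starting from $x_1 \cdots x_k$, I would first try to pull together the maximal initial stretch of letters that can be symmetrized: using Proposition~\ref{prop:perm} we may replace $x_1 \cdots x_k$ by $\mathfrak{s}(x_1 \cdots x_k)$ plus elements of $\Tilde{\mathcal{J}}$, and the latter are precisely sums of words of the form $u \cdot [y,z] \cdot v$ with $u,v$ (possibly empty) words and $y,z \in \mathcal{C}$.

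The key step is then to show that the ``error terms'' of the form $u \cdot [y,z] \cdot v$ can themselves be normalized into SBB-form, with a strict decrease in some well-chosen complexity measure so that the recursion terminates. Here the natural move is: within $u$, which is a shorter word, apply Proposition~\ref{prop:perm} to symmetrize it, producing $\mathfrak{s}(u) \cdot [y,z] \cdot v$ plus further bracket-ideal terms $u' \cdot [y',z'] \cdot u'' \cdot [y,z] \cdot v$ in which the first bracket has been pushed strictly to the left. Each such rewriting either increases the number of brackets already ``locked in'' at the front as symmetric-bracket blocks, or shortens the unprocessed tail; since both quantities are bounded (the total degree $k$ bounds the number of brackets), the process terminates. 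Concretely I would set up the induction on the pair (number of bracket-blocks still to be formed, length of the currently unprocessed suffix) ordered lexicographically, or more simply induct on word length $k$ with an inner induction handling the placement of a single bracket.

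A cleaner way to organize the same argument: induct on $k = |\omega|$. For $k \le 1$ the word is already a (terminal) symmetric word. For $k \ge 2$, write $\omega = \mathfrak{s}(\omega) + r$ with $r \in \Tilde{\mathcal{J}}$ a sum of terms $u \cdot [y,z] \cdot v$ where $|u| + |v| = k-2$. For each such term, the word $u$ has length $< k$, so by the induction hypothesis $u$ rewrites as a sum of SBB-words; prepending a block is harmless, and it remains to absorb the trailing $[y,z] \cdot v$. Moving $[y,z]$ leftward past the last symmetric block of the SBB-form of $u$ (again via Proposition~\ref{prop:perm} applied inside that block, which only spawns terms with more brackets or shorter blocks) lets us merge it into the block structure; then $v$, of length $< k$, is rewritten by induction and grafted on as the final symmetric word or absorbed into subsequent blocks. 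Careful bookkeeping shows every newly produced term has strictly smaller complexity, so the recursion closes.

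\textbf{Main obstacle.} The delicate point is termination: naively, rewriting $u \cdot [y,z] \cdot v$ produces new terms still containing a ``loose'' bracket next to an unsymmetrized tail, and one must verify these are genuinely simpler. The argument hinges on choosing the right complexity measure — something like the multiset of suffix-lengths-after-each-bracket, or the position of the rightmost not-yet-normalized bracket — and checking that each application of Proposition~\ref{prop:perm} strictly decreases it. I expect this termination bookkeeping, rather than any single algebraic identity, to be the crux of the proof; the algebraic content is entirely contained in Proposition~\ref{prop:perm} and the trivial observation that $\mathfrak{s}(\omega)$ is a symmetric word by definition.
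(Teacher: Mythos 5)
Your ``cleaner way'' paragraph is essentially the paper's proof: induct on $k=|\omega|$, write $\omega = \mathfrak{s}(\omega) + \nu$ with $\nu \in \Tilde{\mathcal{J}}$ a sum of terms $\nu_1 \cdot [y,z] \cdot \nu_2$, and apply the induction hypothesis to $\nu_1$ and $\nu_2$, both of which have length $<k$. However, the termination bookkeeping you identify as the crux is not actually needed for this proposition: an SBB-word carries no ordering condition, so once $\nu_1 = \eta_1 \cdots \eta_s$ is in SBB form the trailing bracket simply attaches to the final symmetric block to give the symmetric-bracket word $\eta_s \cdot [y,z]$, after which concatenation with the SBB form of $\nu_2$ is already an SBB-word --- no leftward movement of brackets and no auxiliary complexity measure is required, and the induction on length closes immediately. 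The difficulties you anticipate (ordering the brackets, controlling the cascade of new terms produced by repeated applications of Proposition~\ref{prop:perm}) are real, but they belong to the proof of Theorem~\ref{th:OSBB}, where the ordering condition on OSB-words forces exactly the kind of iterative rewriting and termination argument you describe.
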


\begin{proof}
 This is immediately true for words of length 1. Assume true for words of length less than $k$ and let $\omega \in T(W)$ be any word of length $k$. Then $\omega - \s(\omega) \in \Tilde{\mathcal{J}}$ by Proposition \ref{prop:perm}. We~get
 \[
 \omega = \s(\omega) + (\omega - \s(\omega)) = \s(\omega) + \nu,
 \]
 where $\nu$ is a linear combination of elements on the form $\nu_1 \cdot [x,y] \cdot \nu_2$ with both $|\nu_1|<k$ and $|\nu_2|<k$, hence we can rewrite them as a sum of SBB-words by assumption.
\end{proof}

\begin{proof}[Proof of Theorem \ref{th:OSBB}]
 \textit{Step $1$.} Prove that any word can be written as a sum of OSBB-words. By Proposition \ref{cor:SBB-word}, it is sufficient to show that this is possible for any SBB-word. Assume this is true for SBB-words of length less than $k$ and let $\omega$ be a SBB-word of length $k$. Then $\omega= \omega_1 \cdots \omega_s$. Assume that $\omega_1 = \s(x_1 \cdots x_n \cdot u) \cdot [y,z]$ with $u \prec z \prec y$. The Jacobi identity gives
 \[
 u \cdot[y,z] = [y,z] \cdot u - y \cdot [z, u] + [z,u]\cdot y + z \cdot [y,u] - [y,u]\cdot z =: J(u \prec z \prec y).
 \]
 Then we can write
 \[
 \begin{split}
 \omega_1 &= \frac{1}{n+1} \Biggl( \s(x_1 \cdots x_n) \cdot u \cdot [y,z] + \sum_{i=1}^n \s(x_1 \cdots \hat{x}_i \cdots x_n \cdot u)\cdot x_i \cdot [y,z] \Biggr) \\
 &= \frac{1}{n+1} \s(x_1 \cdots x_n) \cdot J(u \prec z \prec y) + \sum_{i=1}^n \biggl( \frac{1}{n+1} \omega_1 + \nu_i(x_1 \cdots x_n \cdot u) \cdot [y,z]\biggr),
 \end{split}
 \]
 where $\nu_i(x_1 \cdots x_n \cdot u) \in \Tilde{\mathcal{J}}$ is the terms you get from rewriting $\s(x_1 \cdots \hat{x}_i \cdots x_n \cdot u) \cdot x_i - \s{(x_1 \cdots x_n \cdot u)}$ according to Proposition \ref{prop:perm}. Solving for $\omega_1$, we get
 \begin{align}
 \omega_1 &= \s(x_1 \cdots x_n) \cdot J(u \prec z \prec y) + \sum_i \nu_i(x_1 \cdots x_n \cdot u) \cdot [y,z]\label{eq:OSBB_rewrite}\\
 &= \s(x_1 \cdots x_n) \cdot ( [y,z] \cdot u - y \cdot [z, u] + [z,u]\cdot y + z \cdot [y,u] - [y,u]\cdot z ) + \sum_i \nu_i \cdot [y,z].\nonumber
 \end{align}
 We need to show that each of these parts resolve into OSBB-words when concatenated with $\omega_2 \cdots \omega_s$.
 \begin{itemize}\itemsep=0pt
 \item The terms $[z,u]\cdot y$ and $[y,u]\cdot z$ in $J(u\prec z\prec y)$ are similar and give words on the form $\eta \cdot p \cdot\omega_2 \cdots \omega_s$, where $p$ is either $y$ or $z$. Notice that $\eta$ is an OSB-word and $|p \cdot \omega_2 \cdots \omega_s|<k$ and can therefore be written as a OSBB-word by assumption.
 \item The term $[y,z]\cdot u$ from $J(u \prec z \prec y)$ gives us a SB-word $\s(x_1 \cdots x_n)\cdot [y,z]$ connected to a word $u \cdot \omega_2 \cdots \omega_s$. The latter part has length smaller than $k$ and can be written as a OSBB-word by assumption. If $\s(x_1 \cdots x_n)\cdot [y,z]$ is not ordered, we find the smallest element $x_i$ and rewrite as above, but for $x_i$ instead of $u$. Since repeating this process reduces the length of the word by one, hence the first block will resolve into an OSB-word in the end, while the rest is a word of length shorter than $k$ and can therefore be rewritten as an OSBB-word.
 \item Consider $y \cdot [z,u]$. By Proposition~\ref{prop:perm}, we get $\s(x_1 \cdots x_n) \cdot y \cdot [z,u] = \s(x_1 \cdots x_n \cdot y) \cdot [z,u] + \eta \cdot [z,u]$. The first term is OSB. We can write $\eta$ as a sum of SBB-words by Proposition~\ref{cor:SBB-word}, $\eta = \eta_1 \cdots \eta_r$. Since $|\eta_2 \cdots \eta_r \cdot [z,u] \cdot \omega_2 \cdots \omega_s|<k$, we can rewrite this as a sum of OSBB-words by assumption. If $\eta_1$ is not ordered, we detect the smallest element and redo this process. Since this reduces the length of the first block, the process must terminate.
 The case $z \cdot [y,u]$ is the same.
 \item Lastly, consider $\nu_i(x_1 \cdots x_n \cdot u)$. We rewrite this as a sum of SBB-words $\eta_1 \cdots \eta_r$. Again $\eta_2 \cdots \eta_r \cdot [y,z] \cdot \omega_2 \cdots \omega_s$ has length shorter than $k$ and can be rewritten as an OSBB-word by assumption. If $\eta_1$ is not OSB, we detect the smallest element and redo this process. Since the length of $\eta_1$ is reduced by each iteration, it must terminate.
 \end{itemize}

 \textit{Step $2$.} Prove that the rewriting is unique. Let $\alpha_k(n)$ denote the number of OSBB-words of length $k$ made from $n$ letters where the letters can be used any number of times. For convenience, let $\alpha_0(n)=1$ for all $n$. We need to show that $\alpha_k(n) = n^k$. First, note the following recursive formula for $\alpha_k(n)$:
 \begin{equation}\label{eq:rec_alpha}
 \alpha_k(n) = \binom{k + (n-1)}{(n-1)} + \sum_{i=2}^k \Biggl( \sum_{j=1}^{n-1} (n-j) \binom{(i-2) + (n-j)}{(n-j)} \Biggr) \alpha_{k-i}(n).
 \end{equation}
 Each part is explained as follows:
 \begin{itemize}\itemsep=0pt
 \item $\binom{k + (n-1)}{(n-1)}$ is the number of symmetric words we can make of length $k$ from $n$ letters.
 \item The $i$ denotes the length of the first block in an OSBB-word that is not symmetric, hence the shortest possible length of the first block is $2$ in the case it is just a bracket.
 \item The $j$ denotes the number of the last letter in the bracket of the first block with the enumeration being given by the order on the letters, counting from the smallest letter as number $1$, and the largest letter is then number $n$. Notice that $j$ only counts until $n-1$, since there must at least be one larger letter available to put in the bracket.
 \item $(n-j) \binom{(i-2) + (n-j)}{(n-j)}$ is the number of OSB-words of length $i$ with the last letter of the bracket being number $j$ counting from smallest to largest. The factor $(n-j)$ corresponds to choosing a larger letter to be the first letter in the bracket, and $\binom{(i-2) + (n-j)}{(n-j)}$ corresponds to choosing letters for the symmetric part, all of which is larger or equal to the last letter of the bracket.
 \item If we ignore the $n$, we can read the formula as: The number of OSBB-words of length~$k$ is equal to the number of symmetric words of length $k$ plus the number OSB-words of length~$i$ times the number of OSSB-words of length $k-i$ for each $2\leq i \leq k$, that is, for each possible length of the first block.
 %\item We let $\alpha_0(n)=1$.
 \end{itemize}
 We want to prove that $\alpha_k(n)=n^k$ by induction on $k$. First, note that clearly $\alpha_1(n) = n$. Assume that $\alpha_r(n)=n^r$ for all $r<k$, and note that by \eqref{eq:rec_alpha}, we have
 \begin{equation}\label{eq:rec_alpha2}
 \alpha_{k-1}(n) = \binom{(k-1) + (n-1)}{(n-1)} + \sum_{i=2}^{k-1} \Biggl( \sum_{j=1}^{n-1} (n-j) \binom{(i-2) + (n-j)}{(n-j)} \Biggr) \alpha_{k-1-i}(n).
 \end{equation}
 By assumption, $\alpha_{k-i}(n) = n^{k-i}$ as long as $i$ is positive, hence $n\alpha_{k-1-i}(n) = \alpha_{k-i}(n)$. We note that the coefficient of $\alpha_{k-1-i}(n)$ in \eqref{eq:rec_alpha2} is equal to the coefficient of $\alpha_{k-i}(n)$ in \eqref{eq:rec_alpha}. These observations let us rewrite $\alpha_k(n)$ as
 \begin{gather*}
 \alpha_k(n) = n \alpha_{k-1}(n) + \sum_{j=1}^{n-1}(n-j)\binom{(k-2) + (n-j)}{(n-j)} + \binom{k + (n-1)}{(n-1)}\\ \hphantom{\alpha_k(n) =}{}
 - n\binom{(k-1)+(n-1)}{(n-1)}.
 \end{gather*}
 Here $\sum_{j=1}^{n-1}(n-j)\binom{(k-2) + (n-j)}{(n-j)}$ is the term in \eqref{eq:rec_alpha} corresponding to the term in the summation when $i=k$ which can not be captured in $\alpha_{k-1}(n)$, and the last two terms correspond to the first term in \eqref{eq:rec_alpha} and the first term in \eqref{eq:rec_alpha2}. By assumption, $n\alpha_{k-1}(n)=n^k$, and the rest of the terms cancel by Lemma \ref{lemma:bin_ind}, hence we have $\alpha_k(n)=n^k$, and the theorem follows by induction.
\end{proof}

\subsection{An order on the OSBB-words}

To construct OSBB-words, we depend on a total order on the elements we construct the words with. We want to use OSBB-words to construct a basis for the free algebra, but then we must construct a total order on the basis simultaneously. We start by constructing a general order on the OSBB-words, depending on the order of the elements the words are made from.

\begin{Definition}%\label{def:Delta}
 %Let $\omega = \omega_1 \cdots \omega_k$ and $\eta = \eta_1 \cdots \eta_r$ be two OSBB-words in $D(\A)$.
 Let $W$ be a vector space with a basis $\mathcal{C}$ and a total order $\prec = \prec_{\mathcal{C}}$ on $\mathcal{C}$. Define a total order $\prec_{\Delta} = \prec_{\Delta_{\mathcal{C}}}$ on the set of OSBB-words in $T(W)$ by
 \begin{enumerate}\itemsep=0pt
 \item[(i)] \textit{Compare symmetric words:} Let $\omega = \mathfrak{s}(x_1 \cdots x_n)$ and $\eta = \mathfrak{s}(y_1 \cdots y_m)$ be two symmetric words and let the indexing be such that $x_1 \succeq \dots \succeq x_n$ and $y_1 \succeq \dots \succeq y_m$. We set $\omega \succ_{\Delta} \eta$~if
 \begin{itemize}\itemsep=0pt
 \item $n>m$, or
 \item $|\omega| = |\eta|$, $x_i = y_i$ for $i<j$ and $x_j \succ y_j$.
 \end{itemize}
 \item[(ii)] \textit{Compare OSB-words:} Let $\omega = \mathfrak{s}(x_1 \cdots x_n)\cdot[u,v]$ and $\eta = \mathfrak{s}(y_1 \cdots y_m)\cdot[w,z]$ be two ordered symmetric-bracket words and let the indexing be such that $x_1 \succeq \dots \succeq x_n$ and $y_1 \succeq \dots \succeq y_m$. We set $\omega \succ_{\Delta} \eta$ if
 \begin{itemize}\itemsep=0pt
 \item $n>m$, or
 \item $n=m$ and $v \succ z$, or
 \item $n=m$, $v=z$ and $u\succ w$, or
 \item $n=m$, $v=z$, $u=w$, and $\mathfrak{s}(x_1 \cdots x_n) \succeq_{\Delta} \mathfrak{s}(y_1 \cdots y_m)$.
 \end{itemize}
 \item[(iii)] \textit{Compare symmetric word and OSB-word:} Let $\omega = \mathfrak{s}(x_1 \cdots x_n)\cdot[u,v]$ be an OSB-word and $\eta = \mathfrak{s}(y_1 \cdots y_m)$ a symmetric word, again assuming the $x_i$'s and $y_i$'s are listed in decreasing order. Then $\omega \succ_{\Delta} \eta$ if and only if
 \begin{itemize}\itemsep=0pt
 \item $|\omega| > |\eta|$.
 \end{itemize}
 %If $l(\omega) \leq l(\eta)$, then $\omega \prec \eta$.
 \item[(iv)] \textit{Compare OSBB-words:} let $\omega = \omega_1 \cdots \omega_k$ and $\eta = \eta_1 \cdots \eta_r$ be two OSBB-words. Let $\omega \succ_{\Delta} \eta$ if
 \begin{itemize}\itemsep=0pt
 \item $|\omega| > |\eta|$, or
 %\item $l(\omega)=l(\eta)$ and $k>r$, or
 \item $|\omega|=|\eta|$, $\omega_i = \eta_i$ for $i<j$ and $\omega_j \succ_{\Delta} \eta_j$.
 \end{itemize}
 \end{enumerate}
\end{Definition}

The following lemma is a generalization of a normal quality of the lexicographical order on words: if one letter in a word is replaced by a smaller letter (lets say we replace an ``A'' with a ``B'') the resulting word would be smaller. When considering OSBB-words, the case becomes a bit more technical, since replacing letters with smaller letters means the resulting word need not be an OSBB-word.

An OSBB-word $\omega$ will consist of $|\omega|$ elements from the underlying algebra put together by $\mathfrak{s}$, $[\noarg , \noarg]$ and $\cdot$. If $\omega = \mathfrak{s}\bigl(x_1^1 \cdots x_{n_1}^1\bigr) \cdot\bigl[y^1, z^1\bigr] \cdots \mathfrak{s}\bigl(x^k_1 \cdots x^k_{n_k}\bigr)$ we introduce the notation $\Bar{\omega}\bigl(u_1 , \dots , u_{|\omega|}\bigr) = \mathfrak{s}\bigl(u_1 \cdots u_{n_1}\bigr) \cdot \bigl[u_{n_1 +1}, u_{n_1 +2}\bigr] \cdots \mathfrak{s}\bigl(u_{|\omega|-n_k} \cdots u_{|\omega|}\bigr)$, by which we mean: maintain the same structure of the word with respect to $\mathfrak{s}$, $[\noarg , \noarg]$ and $\cdot$, but exchange the $i$'th input with~$u_i$. Notice that $\Bar{\omega}\bigl(u_1 , \dots , u_{|\omega|}\bigr)$ need not be an OSBB-word since the condition that the second argument of each bracket must be minimal in its block might not be satisfied.

\begin{Lemma}\label{lemma:smaller_OSBB}
 Let $\omega = \Bar{\omega}\bigl(u_1 , \dots , u_{|\omega|}\bigr)$ be an OSBB-word, and let $v_i \preceq u_i$ for all $i$. Rewrite $\Bar{\omega}\bigl(v_1 , \dots , v_{|\omega|}\bigr)$ as a linear combination of OSBB-words. Then each term $\eta$ in this linear expansion satisfies $\eta \preceq_{\Delta} \omega$.
\end{Lemma}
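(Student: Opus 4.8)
I would argue by induction on $k=|\omega|$, peeling off the leading block. If $\omega$ is a single symmetric word the claim is immediate from clause~(i) in the definition of $\prec_\Delta$ together with the elementary fact that $v_i\preceq u_i$ for all $i$ forces the decreasingly sorted sequence of the $v_i$ to be lexicographically $\preceq$ that of the $u_i$. Otherwise write $\omega=\omega_1\cdot\Phi_\omega$, with $\omega_1$ an OSB-word of degree $m_1$ ($2\le m_1\le k$) and $\Phi_\omega$ an OSBB-word (possibly the empty word), so that $\bar\omega(v)=\Psi\cdot\Phi$ with $\Psi=\bar{\omega_1}(v)$ a (not necessarily ordered) symmetric-bracket word and $\Phi=\bar{\Phi_\omega}(v)$. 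The induction hypothesis applied to $\Phi_\omega$ (a shorter OSBB-word, and the relevant $v_i$ are still $\preceq$ the relevant $u_i$) shows the OSBB-expansion of $\Phi$ uses only OSBB-words $\preceq_\Delta\Phi_\omega$; and a short check from clause~(iv) shows $\prec_\Delta$ is monotone under left concatenation by a fixed OSB-word. So everything reduces to the leading block.

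Next I would split on $\Psi$. If its two bracket letters agree, $\Psi=0$ and there is nothing to prove. If, possibly after a sign-changing transposition of the bracket letters, $\Psi$ becomes an ordered symmetric-bracket word $\Psi'$, then $\bar\omega(v)=\pm\,\Psi'\cdot\Phi$ is already in OSBB-form once $\Phi$ is, and a direct comparison via clauses~(i)--(ii) --- using $v_i\preceq u_i$, the inequalities making $\omega_1$ an OSB-word, and the sorting fact above --- gives $\Psi'\preceq_\Delta\omega_1$; combined with the treatment of $\Phi$ and concatenation-monotonicity, every term is $\preceq_\Delta\omega$. The remaining case is $\Psi=\mathfrak{s}(a_1\cdots a_n)\cdot[b_1,b_2]$ (relabelled so $b_1\succ b_2$), with $n\ge1$ and $a^*:=\min_i a_i\prec b_2\prec b_1$; here I apply verbatim the Jacobi rewriting \eqref{eq:OSBB_rewrite} from the proof of Theorem~\ref{th:OSBB} to the offending triple $(a^*,b_2,b_1)$, which expresses $\Psi$ as $\mathfrak{s}(\tilde a_1\cdots\tilde a_{n-1})\cdot\bigl([b_1,b_2]\cdot a^*-b_1\cdot[b_2,a^*]+[b_2,a^*]\cdot b_1+b_2\cdot[b_1,a^*]-[b_1,a^*]\cdot b_2\bigr)+\sum_i\nu_i\cdot[b_1,b_2]$ with $\nu_i\in\Tilde{\mathcal{J}}$ of degree $m_1-2$, all multiplied on the right by $\Phi$.

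The core of the proof is to show that in this last case every OSBB-word produced has leading block $\prec_\Delta\omega_1$; then clause~(iv) gives $\eta\prec_\Delta\omega$ for each resulting $\eta$. Each summand above carries an explicit bracket factor, hence lies in $\Tilde{\mathcal{J}}$; since $\Tilde{\mathcal{J}}$ is spanned by the OSBB-words with at least two blocks ($T(W)/\Tilde{\mathcal{J}}\cong S(W)$), its OSBB-expansion uses only OSBB-words whose leading block is an OSB-word, so clause~(ii) governs the comparison of leading blocks. For the terms $\mathfrak{s}(\tilde a)\cdot[b_1,b_2]\cdot a^*\cdot\Phi$, $\mathfrak{s}(\tilde a)\cdot[b_2,a^*]\cdot b_1\cdot\Phi$, $\mathfrak{s}(\tilde a)\cdot[b_1,a^*]\cdot b_2\cdot\Phi$, together with the $\nu_i$-terms and all $\Tilde{\mathcal{J}}$-remainders that arise when a stray letter is absorbed into a symmetric part via Proposition~\ref{prop:perm}, the leading symmetric-bracket factor has degree $<m_1$; running the ordering-rewrite of Theorem~\ref{th:OSBB} only shortens this leading factor further, or locks in an already-ordered one of the same degree, while stray trailing letters are absorbed into \emph{later} blocks and never touch block~$1$ --- and because a bracket factor is always present, no pure symmetric word can ever slide to the front and inflate the degree of block~$1$. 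Hence block~$1$ ends up with fewer than $m_1-2$ symmetric leaves, and clause~(ii) puts it $\prec_\Delta\omega_1$ (which has exactly $m_1-2$ symmetric leaves). For the two exceptional terms $\mathfrak{s}(\tilde a)\cdot b_1\cdot[b_2,a^*]\cdot\Phi$ and $\mathfrak{s}(\tilde a)\cdot b_2\cdot[b_1,a^*]\cdot\Phi$, absorbing $b_1$ (resp.\ $b_2$) into the symmetric block splits off a \emph{main} piece $\mathfrak{s}(\tilde a\cdot b_1)\cdot[b_2,a^*]\cdot\Phi$ (resp.\ $\mathfrak{s}(\tilde a\cdot b_2)\cdot[b_1,a^*]\cdot\Phi$) whose leading symmetric-bracket factor is \emph{already ordered}, has degree exactly $m_1$, and has second bracket letter $a^*$; since $a^*\prec b_2\preceq u_{b_2}$ (the second bracket letter of $\omega_1$), clause~(ii) again puts this leading block $\prec_\Delta\omega_1$, while the leftover piece lies in $\Tilde{\mathcal{J}}$ of degree $m_1-2$ and is handled as before.

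The main obstacle is precisely this last bookkeeping: keeping the leading block under control through the iterated, deeply nested ordering-rewrite. The two facts that make it work are that the rewrite locks in an ordered leading block as soon as one appears, and that every intermediate term retains a bracket factor --- equivalently, the relevant elements stay inside $\Tilde{\mathcal{J}}$, which is spanned by OSBB-words with at least two blocks --- so a pure symmetric word can never migrate to the front. Together these confine the leading block either to degree strictly below $m_1$ (the ``strictly fewer symmetric leaves'' case of clause~(ii)) or, for the two main terms, to degree $m_1$ but with a strictly smaller second bracket letter; in every case clause~(iv) then yields $\eta\preceq_\Delta\omega$, completing the induction.
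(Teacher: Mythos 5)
Your proof is correct and follows essentially the same route as the paper's: induction on $|\omega|$, reduction to the leading block via concatenation-monotonicity of $\prec_{\Delta}$, and an analysis of the Jacobi rewriting \eqref{eq:OSBB_rewrite} showing that every resulting leading block either has strictly fewer symmetric letters than $\omega_1$ or the same number but a strictly smaller second bracket letter. The only organizational difference is that the paper first reduces to replacing a single letter at a time (using uniqueness of the OSBB-expansion), which shortens its case analysis, whereas you replace all letters simultaneously and compensate by tracking the leading block through the iterated rewrite in more detail---detail the paper compresses into ``by inspection.''
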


\begin{proof}
 We do this by induction on the length $|\omega|$. The statement is clearly true for $|\omega|=1$, and assume it is true whenever $|\omega|<k$. Consider the case $|\omega|=k$. We introduce the notation $\{u_i \leftrightarrow v_i \}$ to mean that $u_i$ is replaced by $v_i$. Notice that rewriting $\Bar{\omega}\bigl(u_1, \dots , \{ u_i \leftrightarrow v_i \} , \dots ,u_{|\omega|}\bigr)$ as a sum of OSBB-words $\eta_j$, then changing $\{u_l \leftrightarrow v_l \}$ in each of the OSBB-words $\eta_j$ and then rewriting all those terms as OSBB-words will give the same result as rewriting $\Bar{\omega}\bigl(u_1, \dots , \{ u_i \leftrightarrow v_i \} , \dots , \{ u_l \leftrightarrow v_l \} , \dots ,u_{|\omega|}\bigr)$ into a linear combination of OSBB-words directly. This is because we have equality at each step, and the rewriting into OSBB-words is unique. Hence it is sufficient to show that the lemma holds when $u_i = v_i$ for all $i$ except one. Furthermore, if $\omega = \omega_1 \cdots \omega_s$ and we exchange an element from another block than $\omega_1$, by assumption the lemma holds for the OSBB-word $\omega_2 \cdots \omega_s$, and we get $\eta = \omega_1 \cdot \eta_0$ where $\eta_0$ is the result after solving the reduced problem. Then $\eta_0 \preceq_{\Delta} \omega_2 \cdots \omega_s$ implies $\eta \preceq_{\Delta} \omega$.

 Let $\omega_1 = \mathfrak{s}(x_1 \cdots x_n)\cdot[y,z]$.
 \begin{itemize}\itemsep=0pt
 \item Let $\Tilde{z}\preceq z$, then $\Tilde{\omega}_1 \cdot \mathfrak{s}(x_1 \cdots x_n)\cdot[y,\Tilde{z}]$ is an ordered-symmetric-bracket block and $\Tilde{\omega}_1 \cdot \omega_2 \cdots \omega_s \preceq_{\Delta} \omega$.
 \item Let $\Tilde{y} \preceq y$, then $\Tilde{\omega}_1 \cdot \mathfrak{s}(x_1 \cdots x_n)\cdot[\Tilde{y},z]$ is either an OSB-word, equal to zero (if $\Tilde{y}=z$), or $\Tilde{\omega}_1 = - \mathfrak{s}(x_1 \cdots x_n)\cdot[z,\Tilde{y}]$ is an OSB-word. In all cases $\Tilde{\omega}_1 \cdot \omega_2 \cdots \omega_s \preceq_{\Delta} \omega$.
 \item If $\Tilde{x}_i \preceq x_i$, we must rewrite $\mathfrak{s}(x_1 \cdots \{ x_i \leftrightarrow \Tilde{x}_i\} \cdots x_n)\cdot[y,z]\cdot \omega_2 \cdots \omega_s$ as a sum of OSBB-words. If $\Tilde{x}_i \succeq z$, we are done. If not, we must consider equation (\ref{eq:OSBB_rewrite}) from the proof of Theorem \ref{th:OSBB}. By inspection, we can see that the first block after rewriting will either have~$\Tilde{x}_i$ as the last term of the bracket or have shorter length than $\omega_1$, and we get $\eta \preceq_{\Delta} \omega$.
 \end{itemize}
 This concludes the proof.
\end{proof}

\section{Using OSBB-words to build bases for the free algebra}\label{sec:bases}

The basis $\mathcal{B}$ of $\LAT(\mathcal{C})$ presented in Theorem \ref{th:LATbasis} is not naturally compatible with the tree basis $\OT(\mathcal{C})$ of the free algebra. In fact, it is not a priori clear that the elements of $\mathcal{B}$ are linearly independent in $\Alg(\mathcal{C})$ as the elements in $\mathcal{B}$ could be very complicated to write in the basis $\OT(\mathcal{C})$. By using Theorem \ref{th:OSBB}, we will be able to construct a basis for $\Alg(\mathcal{C})$ in which $\mathcal{B}$ is a~subset, hence we conclude at the end of this section that the suggested basis for $\LAT(\mathcal{C})$ is at least a linearly independent set.

\subsection[An ordered basis of Alg(C)]{An ordered basis of $\boldsymbol{\Alg(\mathcal{C})}$}%\label{subsec:S_order}

The tree basis for the free algebra $\Alg(\mathcal{C})$ is the set of planar rooted trees colored by the set~$\mathcal{C}$. Any tree $\tau$ in this basis will have a specific number of vertices, and we let $|\tau|_V$ denote this number: The number of vertices $| \noarg |_V$ of a tree in $\OT(\mathcal{C})$ is given by
\begin{itemize}\itemsep=0pt
 \item $|c|_V=1$ for any $c \in \mathcal{C}$, and
 \item $|(t_1 \cdots t_n) \rhd c|_V = 1 + |t_1|_V + \dots + |t_n|_V$ for $t_1 , \dots , t_n \in \OT(\mathcal{C})$.
\end{itemize}

Then the free algebra $\Alg(\mathcal{C})$ is a naturally graded vector space
\[
\Alg(\mathcal{C}) \cong \bigoplus_{i=1}^{\infty} V_i,
\]
where $V_k$ is the vector space spanned by all the trees with $k$ vertices. This definition can immediately be generalized to the standard basis elements of the D-algebra by $|t_1 \cdots t_n|_V = |t_1|_V + \dots + |t_n|_V$. Furthermore, it is clear that rewriting an element $\omega$ from the standard basis preserves both the length of the word $|\omega|$ and the number of vertices $|\omega|_V$, hence $|\noarg|_V$ is well defined for OSBB-words of elements on which $|\noarg|_V$ is well defined. Lastly, if $\omega$ is an OSBB-word and $c \in \mathcal{C}$, we may define $|\omega \rhd c|_V = 1+|\omega|_V$, something that corresponds to the notion that $\omega \rhd c \in V_k$ where $k = 1+ |\omega|_V$.

Let $\mathcal{C}$ be a totally ordered set. Consider the following set $\mathcal{S} \subset \Alg(\mathcal{C})$:
\begin{itemize}\itemsep=0pt
 \item if $c\in \mathcal{C}$ then $c \in \mathcal{S}$,
 \item if $\omega$ is an OSBB-word of elements from $\mathcal{S}$, then $\omega \rhd c \in \mathcal{S}$,
\end{itemize}
where we use the order $\prec_{\mathcal{S}}$ defined by $\omega \rhd c_1 \prec_{\mathcal{S}} \eta \rhd c_2$ if
\begin{enumerate}\itemsep=0pt
 \item[(i)] $|\omega|_V < |\eta|_V$, or
 \item[(ii)] $|\omega|_V = |\eta|_V$ and $\omega \prec_{\Delta} \eta$, or
 \item[(iii)] $\omega = \eta$ and $c_1 < c_2$.
\end{enumerate}

\begin{Proposition}%\label{prop:SBasis}
 The set $\mathcal{S}$ is a basis for $\Alg(\mathcal{\mathcal{C}})$.
\end{Proposition}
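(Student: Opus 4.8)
The plan is to show that $\mathcal{S}$ is obtained from the tree basis $\OT(\mathcal{C})$ by a unitriangular change of basis with respect to the order $\prec_{\mathcal{S}}$, exactly as Theorem~\ref{th:OSBB} expresses the OSBB-words as a unitriangular rewriting of the word basis of $T(W)$. First I would observe that $\mathcal{S}$ is graded by $|\noarg|_V$, so it suffices to work inside each finite-dimensional component $V_k$ separately. Inside $V_k$, a tree basis element has the form $(t_1\dpr t_2\cdots t_n)\rhd c$ with $t_1,\dots,t_n$ trees of total vertex count $k-1$; by induction on $k$ we may assume $\mathcal{S}$ restricted to smaller vertex counts is already a basis, so each $t_i$ is a linear combination of elements of $\mathcal{S}$, and hence $t_1\dpr t_2\cdots t_n$ is a linear combination of words (in the $D$-algebra sense) in elements of $\mathcal{S}$.

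Next I would apply Theorem~\ref{th:OSBB} (with the alphabet being the relevant finite set of $\mathcal{S}$-elements, ordered by $\prec_{\mathcal{S}}$): each such word rewrites uniquely as a linear combination of OSBB-words of $\mathcal{S}$-elements, and grafting onto the root $c$ sends these to elements of $\mathcal{S}$. This shows $\mathcal{S}$ \emph{spans} $\Alg(\mathcal{C})$ in each degree. For linear independence I would count dimensions: by Theorem~\ref{th:OSBB} the number of OSBB-words of a given length over an $m$-letter alphabet equals $m^\ell$, the number of all words of that length, and the left-grafting description of the product shows that the number of elements of $\mathcal{S}$ with vertex count $k$ (rooted at a fixed $c\in\mathcal{C}$, with $\mathcal{S}$-branch-words of total count $k-1$) matches the number of tree-basis elements of $\OT(\mathcal{C})$ with $k$ vertices. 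Since $\dim V_k$ is finite and a spanning set of the right cardinality is automatically a basis, $\mathcal{S}\cap V_k$ is a basis of $V_k$, and summing over $k$ gives the claim.

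The one genuinely delicate point — the main obstacle — is checking that the rewriting from the tree basis into $\mathcal{S}$ is \emph{triangular} with respect to $\prec_{\mathcal{S}}$, i.e.\ that expanding $t_1\dpr\cdots t_n\rhd c$ in terms of $\mathcal{S}$ never produces a strictly larger element, so that no cancellation can collapse the spanning set below full rank. This is where Lemma~\ref{lemma:smaller_OSBB} does the work: replacing each branch $t_i$ by its $\prec_{\mathcal{S}}$-expansion and then normalising via the OSBB-rewriting can only yield $\mathcal{S}$-elements $\eta\rhd c$ with $\eta\preceq_\Delta(\text{the OSBB-normal form of }t_1\dpr\cdots t_n)$, hence $\eta\rhd c\preceq_{\mathcal{S}}$ the ``leading'' tree, while the vertex count is preserved exactly. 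Combined with the induction hypothesis that the branch expansions are themselves unitriangular, this gives a unitriangular transition matrix on each $V_k$, which is invertible; I would write this out carefully as it is the step most prone to an off-by-one in the ordering conventions. Everything else — the grading, the cardinality count, and the reduction to finite-dimensional pieces — is routine given Theorems~\ref{th:OSBB} and the left-grafting formula.
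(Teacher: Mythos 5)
Your spanning argument is the paper's argument: induct on the vertex grading $|\noarg|_V$, expand each branch of a tree basis element $(t_1\dpr\cdots\dpr t_n)\rhd c\in V_k$ into $\mathcal{S}$ by the induction hypothesis, and normalise the resulting words over the alphabet of $\mathcal{S}$-elements with fewer than $k$ vertices into OSBB-words via Theorem~\ref{th:OSBB}. Where you diverge is linear independence. The paper gets it essentially for free: both rewriting steps are \emph{unique} (the induction hypothesis says the $\mathcal{S}$-elements of lower vertex count form a basis, and Theorem~\ref{th:OSBB} says the OSBB-words over a basis are again a basis), and since a tree basis element of $V_k$ is determined by the word of its branches together with its root colour, the whole procedure is a composition of bijective linear maps --- no counting and no triangularity required. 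Your cardinality count is a workable substitute, but it needs two repairs: Theorem~\ref{th:OSBB} counts OSBB-words by word \emph{length} (there are $m^{\ell}$ of them), whereas you need the count inside each vertex-weight-homogeneous component (this does follow, since words and OSBB-words are both homogeneous bases of the same graded space, but it must be said); and ``a spanning set of the right cardinality is a basis'' requires each graded piece to be finite-dimensional, so you are implicitly assuming $\mathcal{C}$ finite or refining the grading by the multiset of colours.

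The triangularity discussion that you flag as ``the one genuinely delicate point'' is a red herring for this proposition. There is no order $\prec_{\Delta}$ on the tree basis against which the expansion could be triangular, and Lemma~\ref{lemma:smaller_OSBB} is about replacing the letters of a \emph{fixed} OSBB-shape by smaller letters, not about normalising an arbitrary word into OSBB form; neither enters the paper's proof here. Triangularity of exactly the kind you describe is the crux of the \emph{next} proposition --- that $\psi$ is an automorphism, proved by showing $\psi(x)=x+q$ with $q$ a combination of $\prec_{\mathcal{S}}$-smaller basis elements via Lemma~\ref{lemma:smaller_OSBB} --- so you have anticipated the right idea but attached it to the wrong statement. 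Dropping that paragraph and patching the count as above (or, more simply, replacing the count by the uniqueness argument) yields a correct proof.
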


\begin{proof}
 By induction on $|\noarg|_V$. Since $c \in \mathcal{S}$ for any $c \in \mathcal{C}$, and this is a basis for $V_1$, we see that the elements in $\mathcal{S}$ with $|x|_V=1$ is indeed a basis for $V_1$. Assume that the elements $y \in \mathcal{S}$ with $|y|_V= j$ is an ordered basis for $V_j$ for all $j<k$. Thus by assumption, any tree in the tree basis of $V_j$ can be written uniquely as a linear combination of elements in $\mathcal{S}$.

 \textit{Step $1$.} Show that any element in $V_k$ can be written uniquely as a linear combination of elements in $\mathcal{S}$. Let $x \in V_k$ be a planar rooted tree in the tree basis for $\Alg(\mathcal{C})$. Then $x = (t_1 \cdots t_n) \rhd c$. By assumption, each $t_i$ can be uniquely rewritten as a linear combination of~$\mathcal{S}$ giving $x = \sum_i a_i (\omega_i \rhd c)$, where $a_i \in \mathbb{R}$ and $\omega_i$ are words of elements form $\mathcal{S}$ satisfying $|\omega_i|=n$ and $|\omega_i|_V = k-1$. By Theorem \ref{th:OSBB}, each of these words $\omega_i$ can be uniquely rewritten as a~linear combination of OSBB-words, since $\omega_i$ is a word of elements $y^i_j$ from $\mathcal{S}$ with $\bigl|y^i_j\bigr|_V <k$, and these are ordered by assumption. this means that any element in $V_k$ can be uniquely written as a linear combination of elements from $\mathcal{S}$.

 \textit{Step $2$.} Show that the order $\prec_{\mathcal{S}}$ is well defined on elements in $V_k$. Let $x , y\in \mathcal{S}$ be two elements with $|x|_V = |y|_V = k$. Then $x = \omega \rhd c_1$ and $y= \eta \rhd c_2$ for some OSBB-words~$\omega$ and~$\eta$ of elements in $\mathcal{S}$ with less than $k$ vertices. We may compare $x$ and $y$ by comparing~$\omega$ and~$\eta$ using $\prec_{\Delta}$ and by comparing $c_1$ and $c_2$ using the given total order on $\mathcal{C}$. Notice that the order on $\mathcal{S}$ prioritize the number of vertices, hence $x$ is clearly comparable to any $z \in V_j$ for $j<k$.

 By induction on the number of vertices, the elements $x$ of $\mathcal{S}$ with $|x|_V=i$ is an ordered basis of~$V_i$ for any $i$, hence $\mathcal{S}$ is a totally ordered basis for $\Alg(\mathcal{C})$.
\end{proof}

\subsection[An ordered basis for Alg(C) that contains B]{An ordered basis for $\boldsymbol{\Alg(\mathcal{C})}$ that contains $\boldsymbol{\mathcal{B}}$}%\label{sec5}

We will now define another basis for the free algebra that is the one that most closely capture the important features for understanding the Lie admissible triple algebra. Although the basis~$\mathcal{S}$ does give an understanding of the intrinsic relations that follows from the definition of the triple-bracket, it is not a convenient basis when working with the LAT-relations.

\begin{Definition}
 Define a map $\psi$ by
 \begin{align*}
 \psi\colon \ \mathcal{S} &\longrightarrow \Alg(\mathcal{C}), \\
 c &\longmapsto c ,\\
 \mathfrak{s}(x_1 \cdots x_n) \rhd c &\longmapsto \mathfrak{s}(\psi(x_1) \cdots \psi(x_n)) \rhd c, \\
 (\mathfrak{s}(x_1 \cdots x_n) \cdot [y, z] \cdot \omega) \rhd c & \longmapsto \mathfrak{s}(\psi(x_1) \cdots \psi(x_n)) \rhd [\psi(y), \psi(z), \psi(\omega \rhd c)]
 \end{align*}
 and extend to $\Alg(\mathcal{C})$ by linearity.
\end{Definition}

\begin{Proposition}
 The map $\psi$ is a linear automorphism of $\Alg(\mathcal{C})$.% that is, an isomorphism from $\Alg(\mathcal{C})$ to itself.
\end{Proposition}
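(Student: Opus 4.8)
The plan is to show that $\psi$ is well-defined, linear, and bijective by exhibiting it as a ``triangular'' change of basis with respect to the grading by $|\noarg|_V$ and the order $\prec_{\mathcal S}$. Since $\mathcal S$ is a basis for $\Alg(\mathcal C)$ (by the previous proposition), it suffices to check that $\psi$ restricted to $\mathcal S$ is injective with image spanning $\Alg(\mathcal C)$; linearity is then automatic from the definition. First I would observe that $\psi$ preserves the number of vertices: by induction on $|\noarg|_V$, using that $\mathfrak{s}(\psi(x_1)\cdots\psi(x_n))\rhd c$ and $\mathfrak{s}(\psi(x_1)\cdots\psi(x_n))\rhd[\psi(y),\psi(z),\psi(\omega\rhd c)]$ have the same vertex count as the corresponding elements of $\mathcal S$ (the triple-bracket $[a,b,c]$ has $|a|_V+|b|_V+|c|_V$ vertices, matching $|\mathfrak{s}(x_1\cdots x_n)\cdot[y,z]\cdot\omega|_V + 1$). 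Hence $\psi$ maps $V_k$ to $V_k$ for every $k$, and it is enough to prove that $\psi\colon V_k \to V_k$ is a bijection for each $k$ separately.

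The core of the argument is a triangularity claim: for $x \in \mathcal S$ with $|x|_V = k$, when $\psi(x)$ is expanded in the basis $\mathcal S$, the leading term (with respect to $\prec_{\mathcal S}$) is $x$ itself, with coefficient $1$, and all other terms are $\prec_{\mathcal S} x$. I would prove this by induction on $|\noarg|_V$. The base case $x = c \in \mathcal C$ is immediate since $\psi(c) = c$. For the inductive step, write $x = \omega \rhd c$ with $\omega$ an OSBB-word in elements of $\mathcal S$ of strictly smaller vertex count. By the inductive hypothesis, each $\psi$ of an entry of $\omega$ equals that entry plus $\prec_{\mathcal S}$-smaller corrections. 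The key technical input is Lemma \ref{lemma:smaller_OSBB}: replacing entries of an OSBB-word (or, here, of a symmetric word and a bracket) by $\prec_{\mathcal S}$-smaller elements and re-expanding into OSBB-words produces only $\preceq_\Delta$-smaller terms. Applying this to $\mathfrak{s}(\psi(x_1)\cdots\psi(x_n))$ and to $[\psi(y),\psi(z),\psi(\omega'\rhd c)]$ — after rewriting the latter via $[\noarg,\noarg]$ and the D-algebra relations \refp{eq:DA1}, \refp{eq:DA2} back into the $\mathcal S$-shape $(\mathfrak{s}(\cdots)\cdot[\cdot,\cdot]\cdot(\cdots))\rhd c$ — shows that $\psi(x) = x + (\text{lower order in }\prec_{\mathcal S})$. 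The one subtlety is that $[\psi(y),\psi(z),\psi(\omega'\rhd c)]$ with the corrections inserted may no longer be in OSB form (the second bracket entry need not be minimal); but this is exactly the situation Lemma \ref{lemma:smaller_OSBB} is designed to handle, since any such re-ordering via \refp{eq:OSBB_rewrite} only decreases the $\prec_\Delta$-value.

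Granting the triangularity claim, the matrix of $\psi$ restricted to $V_k$, expressed in the ordered basis $\mathcal S \cap V_k$, is unitriangular, hence invertible; so $\psi\colon V_k \to V_k$ is a linear isomorphism for every $k$, and therefore $\psi$ is a linear automorphism of $\Alg(\mathcal C)$. The main obstacle I anticipate is the bookkeeping in the inductive step: one must carefully track how the D-algebra rewriting of the nested triple-bracket $[\psi(y),\psi(z),\psi(\omega'\rhd c)]$ interacts with the block structure of OSBB-words and confirm that the leading OSBB-term recovered from it is precisely the block $\mathfrak{s}(x_1\cdots x_n)\cdot[y,z]\cdot\omega'$ — i.e., that the ``leading part'' of the triple-bracket rewriting is the term where no Jacobi/Leibniz reshuffling occurred. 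Establishing this cleanly, rather than by an opaque computation, is where most of the care is needed; the rest follows formally from Lemma \ref{lemma:smaller_OSBB} and the fact that $\mathcal S$ is an ordered basis.
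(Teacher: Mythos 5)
Your proposal is correct and follows essentially the same route as the paper: an induction on the vertex grading showing $\psi(x)=x+(\text{terms strictly smaller in }\prec_{\mathcal S})$, with Lemma \ref{lemma:smaller_OSBB} supplying the key monotonicity step for the triple-bracket case, and the conclusion drawn from unitriangularity of $\psi$ on each graded component $V_k$ in the ordered basis $\mathcal S$. The paper's proof is exactly this argument, stated slightly more tersely.
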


\begin{proof}
 To prove this, we start by showing that $\psi(x) = x + q$ where $q$ is a linear combination of elements in $\mathcal{S}$ that is smaller than $x$. This is clearly true for $c \in V_1$. Assume this is true for $y \in V_j$ with $j<k$, and let $x \in V_k$. There are two cases:
 \begin{itemize}\itemsep=0pt
 \item If $x = \mathfrak{s}(x_1 \cdots x_n) \rhd c$, we have $\psi(x) = \mathfrak{s}(\psi(x_1) \cdots \psi(x_n)) \rhd c$. By assumption, $\psi(x_i) = x_i +q_i$, and since all the terms in $q_i$ are smaller than $x_i$ by assumption, we immediately have $\psi(x) = x + q$, where all the terms in $q$ are smaller than $x$.
 \item If $x = (\mathfrak{s}(x_1 \cdots x_n) \cdot [y, z] \cdot \omega) \rhd c$, we have $\psi(x) = \mathfrak{s}(\psi(x_1) \cdots \psi(x_n)) \rhd [\psi(y), \psi(z), \psi(\omega \rhd c)]$. By assumption, $\psi(x_i) = x_i + q_i$, $\psi(y) = y + p$, $\psi(z) = z + r$ and $\psi(\omega \rhd c) = \omega \rhd c + s$. It is clear that the terms in $s$ has the form $\eta \rhd c$. We need to show that if $\Tilde{x}_i\preceq x_i$, $\Tilde{y} \preceq y$, $\Tilde{z} \preceq z$ and $\eta \preceq_{\Delta} \omega$, then
 \[
 \mathfrak{s}(\Tilde{x}_1 \cdots \Tilde{x}_n) \cdot [\Tilde{y}, \Tilde{z}] \cdot \eta \preceq_{\Delta} \mathfrak{s}(x_1 \cdots x_n) \cdot [y, z] \cdot \omega.
 \]
 This follows from Lemma \ref{lemma:smaller_OSBB}.
 \end{itemize}
 This means that the map $\psi$ acts as an upper triangular matrix with $1$'s on the diagonal on each graded component $V_i$ with respect to the ordered basis $\mathcal{S}$. Clearly such a matrix is invertible, hence $\psi$ is an automorphism.
\end{proof}

Let $\mathcal{D}$ be defined as the image of $\mathcal{S}$ by the map $\psi$. Then $\mathcal{D}$ is a basis of $\Alg(\mathcal{C})$. We may define $\mathcal{D}$ iteratively by
\begin{enumerate}\itemsep=0pt
 \item[(i)] if $c \in \mathcal{C}$, then $c \in \mathcal{D}$,
 \item[(ii)] if $x = \mathfrak{s}(x_1 \dots x_n) \rhd c$, where $x_i \in \mathcal{D}$, then $x \in \mathcal{D}$,
 \item[(iii)] if $x = \mathfrak{s}(x_1 \dots x_n) \rhd [y_1, y_2, y_3]$, where $x_i, y_j \in \mathcal{D}$ with $y_1 > y_2$ and $y_2 \leq x_i$ for $i=1, \dots , n$ and $j=1,2,3$, then $x \in \mathcal{D}$.
\end{enumerate}
It is sufficient to compare this definition with the image of $\psi$ to see that the two definitions of~$\mathcal{D}$ are the same. The set $\mathcal{D}$ is a totally ordered set by $\psi(x) \prec \psi(y)$ if and only if $x \prec_{\mathcal{S}} y$.

\begin{Corollary}\label{cor:B_lin_ind}
 The set $\mathcal{B}$ from Theorem $\ref{th:LATbasis}$ is linearly independent in $\Alg(\mathcal{C})$.
\end{Corollary}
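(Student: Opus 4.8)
The plan is to prove the sharper statement that $\mathcal{B}$ is literally a subset of $\mathcal{D}$; since $\mathcal{D}$ has just been shown to be a basis of $\Alg(\mathcal{C})$, any subset of it is automatically linearly independent. Both sets are produced by recursions graded by the vertex number $|\noarg|_V$: the set $\mathcal{D}$ through its clauses (i)--(iii), and $\mathcal{B}=H(S)$ as the Hall triple set built over the generating set $S=\{\mathfrak{s}(b_1\cdots b_n)\rhd c\mid b_i\in\mathcal{B},\ c\in\mathcal{C},\ n\geq 0\}$, so that $\mathcal{B}$ consists of the elements of $S$ together with the Hall triples over $S$. Hence the argument is a single induction on $|\noarg|_V$, run in parallel with the construction of $\mathcal{B}$.

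First I would pin down the orderings. I build $H(S)$ using $|\noarg|_V$ as its underlying grading; this grading is positive ($|x|_V\geq 1$) and additive under triple brackets ($|[a,b,c]|_V=|a|_V+|b|_V+|c|_V$), and the proof of Proposition~\ref{prop:LTS_Hall} uses only these two features of the grading, so it still produces a Hall basis. On elements of equal $|\noarg|_V$ the Hall order is free, and I choose it, grade by grade, to restrict the order on $\mathcal{D}$ — legitimate because, by the time the induction reaches vertex number $k$, it will already have identified the part of $\mathcal{B}$ of smaller vertex number with a subset of the ordered set $\mathcal{D}$. Since the order on $\mathcal{D}$ compares elements first by $|\noarg|_V$, the net effect is that the Hall order and the order on $\mathcal{D}$ agree wherever both are defined.

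Now the induction: every $b\in\mathcal{B}$ with $|b|_V=k$ lies in $\mathcal{D}$. For $k=1$, $b\in\mathcal{C}\subseteq\mathcal{D}$. For the step, either $b=\mathfrak{s}(b_1\cdots b_n)\rhd c\in S$, in which case each $b_i\in\mathcal{B}$ has $|b_i|_V<k$, hence $b_i\in\mathcal{D}$ by hypothesis, and $b\in\mathcal{D}$ by clause~(ii); or $b=[u,v,w]$ is a Hall triple, in which case $u,v,w\in\mathcal{B}$ with $|u|_V,|v|_V,|w|_V<k$, hence $u,v,w\in\mathcal{D}$ by hypothesis, while the Hall conditions $u>v\leq w$ (read in an order that on this grade-range coincides with the order on $\mathcal{D}$) supply exactly the order constraints demanded by clause~(iii) with $n=0$, i.e.\ with empty symmetric word; therefore $b=[u,v,w]\in\mathcal{D}$. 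After placing every element of $\mathcal{B}$ of vertex number $k$ inside $\mathcal{D}$, I extend the Hall order on that grade to the restriction of the order on $\mathcal{D}$, which closes the induction. Thus $\mathcal{B}\subseteq\mathcal{D}$, and the corollary follows.

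The delicate part, and where I expect the main obstacle, is the interleaving of the two orders. A priori the order on $\mathcal{D}$ lives on $\mathcal{D}$ and the Hall order on $H(S)$, and the very definition of $\mathcal{B}$ already presupposes a Hall order; so ``$\mathcal{B}\subseteq\mathcal{D}$'' and ``the two orders agree below the current grade'' must be threaded together through one induction, and one has to verify carefully that the Hall-basis construction of Proposition~\ref{prop:LTS_Hall} survives replacing its unit grading by the vertex grading $|\noarg|_V$ (which forces one to rely only on the grading being positive and additive). Modulo this bookkeeping, the matching of the two recursions defining $\mathcal{B}$ and $\mathcal{D}$ is routine.
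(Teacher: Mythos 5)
Your proposal is correct and takes exactly the paper's route: the paper's entire proof is the one-line remark ``Observe that $\mathcal{B}\subset\mathcal{D}$,'' and your induction on $|\noarg|_V$, matching the recursive clauses defining $\mathcal{B}=H(S)$ against clauses (i)--(iii) defining $\mathcal{D}$, simply supplies the details the paper leaves implicit. Your extra care about running the Hall construction with the vertex grading and aligning the Hall order with the order on $\mathcal{D}$ addresses a point the paper glosses over, but it does not change the approach.
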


\begin{proof}
 Observe that $\mathcal{B} \subset \mathcal{D}$.
\end{proof}

\section{Proof of Theorem \ref{th:LATbasis}}\label{sec:proof}

To prove Theorem \ref{th:LATbasis}, we will construct a linear map $\phi \colon \Alg(\mathcal{C}) \rightarrow \Alg(\mathcal{C})$ such that $\ker(\phi) = \I$ and $\Img(\phi)= \spn\{\mathcal{B}\}$. The map will be defined explicitly (or iteratively) on the basis $\mathcal{D}$.

Let $\pi_D$ be a linear projection defined by
\begin{align*}
 \pi_D \colon \ \Alg(\mathcal{C}) &\longrightarrow \Alg(\mathcal{C}), \\
 c & \longmapsto c \\ %\quad \text{ for } c\in \mathcal{C} \\
 [x_1, x_2 ,x_3] &\longmapsto [\pi_D(x_1),\pi_D(x_2),\pi_D(x_3)], \\
 \mathfrak{s}(x_1 \cdots x_n) \rhd c &\longmapsto \mathfrak{s}(\pi_D(x_1) \cdots \pi_D(x_n)) \rhd c , \\
 \mathfrak{s}(x_1 \cdots x_n) \rhd [y_1, y_2, y_3] &\longmapsto \pi_D \biggl( \sum_{I,J,K} [\mathfrak{s}\bigl(x_{i_1} \cdots x_{i_{|I|}}\bigr) \rhd y_1, \mathfrak{s}(x_{j_1} \cdots x_{j_{|J|}}) \rhd y_2,\\
 & \hphantom{\longmapsto \pi_D \biggl( \sum_{I,J,K}} \mathfrak{s}(x_{k_1} \cdots x_{k_{|K|}}) \rhd y_3] \biggr),
\end{align*}
where $I$, $J$, $K$ are pairwise disjoint subsets of $\{1, \dots , n\}$ such that $I \cup J \cup K = \{1, \dots , n \}$, and for any $s < t$ we have $i_s < i_t$, $j_s < j_t$ and $k_s < k_t$. Observe that $\Img(\pi_D)$ contains no elements on the form $\mathfrak{s}(x_1 \cdots x_n) \rhd [y_1,y_2,y_3]$. Define a set
\[
\Bar{\mathcal{D}} = \{ x \in \mathcal{D} \mid x = \mathfrak{s}(x_1 \cdots x_n) \rhd c , \, x_i \in \mathcal{D} \} \subset \mathcal{D}
\]
and note that $(\Img(\pi_D), [\noarg,\noarg,\noarg]) \subset \Alg_3(\Bar{\mathcal{D}})$, hence we may define a Hall map $\mathfrak{H}$ as in Definition~\ref{def:Hall_map} and restrict it to the image of $\pi_D$.

Let $\pi_H$ be the projections defined by
\begin{align*}
 \pi_H\colon \ \Img(\pi_D) &\longrightarrow \Alg(\mathcal{C}), \\
 c & \longmapsto c, \\
 [x_1, x_2 ,x_3] &\longmapsto \mathfrak{H}([\pi_H(x_1), \pi_H(x_2) , \pi_H(x_3)] ), \\
 \mathfrak{s}(x_1 \cdots x_n) \rhd c & \longmapsto \mathfrak{s}(\pi_H(x_1) \cdots \pi_H(x_n))\rhd c.
\end{align*}

\begin{Note}
 The notation $\pi_D$ and $\pi_H$ is chosen for the following reasons: Firstly, they are both linear projections, which we indicate by the letter $\pi$. The projection $\pi_D$ is constructed to account for the relation \eqref{eq:LAT2}, which is a derivation property by the triangle product on the triple-bracket, hence the letter $D$. In particular, this choice is not connected to the set $\mathcal{D}$. The letter $H$ in $\pi_H$ is there to indicate its relation with Hall bases.
\end{Note}

\begin{Proposition}\label{id-D_in_I}
For any $x \in \Alg(\mathcal{C})$, we have $x-\pi_D(x) \in \mathcal{I}$.
\end{Proposition}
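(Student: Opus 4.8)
The statement asserts that the map $\pi_D$ collapses only things that are already in the LAT-ideal $\I$. Since $\pi_D$ is defined on the basis $\mathcal{D}$, the natural strategy is induction on the number of vertices $|\noarg|_V$, following the recursive structure of the four clauses in the definition of $\pi_D$. For the base case $x = c \in \mathcal{C}$ there is nothing to prove, as $\pi_D(c) = c$. For the inductive step, fix $x \in \mathcal{D}$ with $|x|_V = k$ and assume $y - \pi_D(y) \in \I$ for all $y$ with fewer vertices; we must check each way $x$ can be built.

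\textbf{The easy clauses.} If $x = \mathfrak{s}(x_1 \cdots x_n) \rhd c$ with the $x_i \in \mathcal{D}$, then $\pi_D(x) = \mathfrak{s}(\pi_D(x_1)\cdots \pi_D(x_n))\rhd c$. Writing each $\pi_D(x_i) = x_i + q_i$ with $q_i \in \I$ (by induction), and expanding multilinearly, every term of $x - \pi_D(x)$ contains at least one factor from some $q_i$. Since $\I$ is a two-sided ideal and $\rhd$ (together with $\cdot$ in the D-algebra) is built from the generating products, such terms lie in $\I$; one has to be slightly careful that the symmetrization $\mathfrak{s}$ and the operation ``graft onto $c$'' do not leave the ideal, but this is immediate because $\I$ is closed under these operations by construction. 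The clause $[x_1,x_2,x_3] \mapsto [\pi_D(x_1),\pi_D(x_2),\pi_D(x_3)]$ is handled identically using tri-linearity of the triple bracket and the fact that $\I$ absorbs brackets.

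\textbf{The main obstacle: the last clause.} The real content is the case $x = \mathfrak{s}(x_1\cdots x_n)\rhd [y_1,y_2,y_3]$, where $\pi_D$ first replaces this by the sum over partitions $I \sqcup J \sqcup K = \{1,\dots,n\}$ of the bracketed terms $[\mathfrak{s}(x_I)\rhd y_1,\, \mathfrak{s}(x_J)\rhd y_2,\, \mathfrak{s}(x_K)\rhd y_3]$ and then applies $\pi_D$ again. The key algebraic identity to establish is that
\[
\mathfrak{s}(x_1\cdots x_n)\rhd [y_1,y_2,y_3] - \sum_{I,J,K}[\mathfrak{s}(x_I)\rhd y_1,\, \mathfrak{s}(x_J)\rhd y_2,\, \mathfrak{s}(x_K)\rhd y_3] \in \I,
\]
which is exactly the symmetrized, iterated form of the LAT-relation~\refp{eq:LAT2}: $w\rhd[x,y,z] = [w\rhd x,y,z] + [x,w\rhd y,z] + [x,y,w\rhd z]$. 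One proves this by induction on $n$, peeling off one element $x_j$ at a time: a single application of~\refp{eq:LAT2} (valid modulo $\I$) distributes $x_j \rhd$ across the three bracket slots, and the combinatorics of which further subset each remaining $x_i$ lands in reproduces exactly the partition sum; the symmetrizer $\mathfrak{s}$ makes the order of peeling irrelevant. This is the step I expect to be the main obstacle, since one must match the Leibniz-style expansion against the sum over ordered partitions and confirm the multiplicities agree — a bookkeeping argument that is conceptually routine but easy to get wrong. Once this identity is in hand, we have $x - \sum_{I,J,K}(\cdots) \in \I$; applying $\pi_D$ to each summand and invoking the inductive hypothesis (each $\mathfrak{s}(x_I)\rhd y_i$ has fewer vertices than $x$, or at worst we recurse on the strictly simpler bracket structure after the outer $\pi_D$ acts) gives $\sum_{I,J,K}(\cdots) - \pi_D(x) \in \I$, and adding the two memberships yields $x - \pi_D(x) \in \I$, completing the induction.
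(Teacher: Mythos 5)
Your proposal is correct and follows essentially the same route as the paper: induction on $|\noarg|_V$, with the easy clauses handled by multilinearity and the whole weight resting on the identity $\mathfrak{s}(x_1\cdots x_n)\rhd[y_1,y_2,y_3]-\sum_{I,J,K}[\cdots]\in\I$, which the paper isolates as Lemma~\ref{Lemma_D_property} and proves in the appendix by exactly the peeling-off induction on $n$ you describe, before combining it with the inductive hypothesis via the same telescoping argument.
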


By Lemma \ref{Lemma_D_property}, we know that
\begin{gather*}
 (x_1 \cdots x_n) \rhd [r_1, r_2, r_3] \\
\qquad - \sum_{I,J,K} \bigl[\bigl(x_{i_1} \cdots x_{i_{|I|}}\bigr) \rhd r_1, \bigl(x_{j_1} \cdots x_{j_{|J|}}\bigr) \rhd r_2, \bigl(x_{k_1} \cdots x_{k_{|K|}}\bigr) \rhd r_3\bigr] \in \mathcal{I},
\end{gather*}
where $I$, $J$, $K$ are pairwise disjoint subsets of $\{1, \dots , n\}$ such that $I \cup J \cup K = \{1, \dots , n \}$, and for any $s < t$ we have $i_s < i_t$, $j_s < j_t$ and $k_s < k_t$. In this proof we will always let $I$, $J$ and $K$ denote a triplet of sets with these properties.

\begin{proof} By induction on $|x|_V$: If $|x|_V=1$, we have $x \in \mathcal{C}$ and $x-\pi_D(x) = x-x =0 \in \mathcal{I}$. Assume $x-\pi_D(x) \in \mathcal{I}$ when $|x|_V <k$, then consider the case $|x|_V=k$. By linearity of $\pi_D$, we may assume $x \in \mathcal{D}$. If $x = \mathfrak{s}(x_1 \cdots x_n) \rhd c$, we have
\[
x - \pi_D(x) = \sum_{i=1}^n \mathfrak{s}(\pi_D(x_1) \cdots (x_i-\pi_D(x_i)) \cdots x_n) \rhd c.
\]
Since $|x_i|_V < k$, we have $x_i - \pi_D(x_i) \in \mathcal{I}$, hence also $x-\pi_D(x) \in \mathcal{I}$. If $x = \mathfrak{s}(x_1 \cdots x_n) \rhd [r_1, r_2, r_3]$, we have
\begin{gather*}
 x - \pi_D(x) = \mathfrak{s}(x_1 \cdots x_n) \rhd [r_1, r_2, r_3] \\ \hphantom{x - \pi_D(x) =}{}
 - \pi_D \biggl( \sum_{I,J,K} \bigl[\mathfrak{s}\bigl(x_{i_1} \cdots x_{i_{|I|}}\bigr) \rhd r_1, \mathfrak{s}\bigl(x_{j_1} \cdots x_{j_{|J|}}\bigr) \rhd r_2, \mathfrak{s}\bigl(x_{k_1} \cdots x_{k_{|K|}}\bigr) \rhd r_3\bigr] \biggr) \\ \hphantom{x - \pi_D(x) }{}
 = \mathfrak{s}(x_1 \cdots x_n) \rhd [r_1, r_2, r_3] \\ \hphantom{x - \pi_D(x) =}{}
 - \sum_{I,J,K} \bigl[\mathfrak{s}\bigl(x_{i_1} \cdots x_{i_{|I|}}\bigr) \rhd r_1, \mathfrak{s}\bigl(x_{j_1} \cdots x_{j_{|J|}}\bigr) \rhd r_2, \mathfrak{s}\bigl(x_{k_1} \cdots x_{k_{|K|}}\bigr) \rhd r_3\bigr] \\ \hphantom{x - \pi_D(x) =}{}
 + \sum_{I,J,K} \bigl[\mathfrak{s}\bigl(x_{i_1} \cdots x_{i_{|I|}}\bigr) \rhd r_1, \mathfrak{s}\bigl(x_{j_1} \cdots x_{j_{|J|}}\bigr) \rhd r_2, \mathfrak{s}\bigl(x_{k_1} \cdots x_{k_{|K|}}\bigr) \rhd r_3\bigr] \\ \hphantom{x - \pi_D(x) =}{}
 - \pi_D \biggl( \sum_{I,J,K} [\mathfrak{s}\bigl(x_{i_1} \cdots x_{i_{|I|}}\bigr) \rhd r_1, \mathfrak{s}\bigl(x_{j_1} \cdots x_{j_{|J|}}\bigr) \rhd r_2, \mathfrak{s}\bigl(x_{k_1} \cdots x_{k_{|K|}}\bigr) \rhd r_3] \biggr) \\ \hphantom{x - \pi_D(x)}{}
 = \alpha + \sum_{I,J,K} [\beta_I - \pi_D(\beta_I),\beta_J, \beta_K] + [\pi_D(\beta_I), \beta_J - \pi_D(\beta_J), \beta_K] \\ \hphantom{x - \pi_D(x) =\alpha + \sum_{I,J,K}}{}
 + [\pi_D(\beta_I), \pi_D(\beta_J), \beta_K -\pi_D(\beta_K)],
\end{gather*}
where
\begin{gather*}
\alpha = \mathfrak{s}(x_1 \cdots x_n) \rhd [r_1, r_2, r_3] \\ \hphantom{\alpha =}{}
- \sum_{I,J,K} \bigl[ \mathfrak{s}\bigl(x_{i_1} \cdots x_{i_{|I|}}\bigr) \rhd r_1, \mathfrak{s}\bigl(x_{j_1} \cdots x_{j_{|J|}}\bigr) \rhd r_2, \mathfrak{s}\bigl(x_{k_1} \cdots x_{k_{|K|}}\bigr) \rhd r_3\bigr] \in \mathcal{I}
\end{gather*}
by Lemma \ref{Lemma_D_property} and
\begin{gather*}
 \beta_I = \mathfrak{s}\bigl(x_{i_1} \cdots x_{i_{|I|}}\bigr) \rhd r_1, \qquad
 \beta_J = \mathfrak{s}\bigl(x_{j_1} \cdots x_{j_{|J|}}\bigr)\rhd r_2, \qquad
 \beta_K = \mathfrak{s}\bigl(x_{k_1} \cdots x_{k_{|K|}}\bigr) \rhd r_3.
\end{gather*}
Since $|\beta_t|_V < p$, we have $\beta_t - \pi_D(\beta_t) \in \mathcal{I}$ for $t = I,J,K$, hence $x-\pi_D(x) \in \mathcal{I}$.
\end{proof}

\begin{Proposition}\label{id-h_in_I}
If $x \in \Img(\pi_D)$, then $x-\pi_H(x) \in \mathcal{I}$.
\end{Proposition}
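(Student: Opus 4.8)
The plan is to induct on the vertex count $|x|_V$, in the same spirit as the proof of Proposition~\ref{id-D_in_I}, but now tracking the Hall rewriting $\mathfrak{H}$ rather than the derivation rewriting $\pi_D$. By linearity of $\pi_H$ it suffices to treat $x$ of one of the three forms occurring in the definition of $\pi_H$, with all components $x_i\in\Img(\pi_D)$ of strictly smaller vertex count; that such elements span $\Img(\pi_D)$ is immediate from the definition of $\pi_D$. The base case $x=c\in\mathcal{C}$ is trivial since $\pi_H(c)=c$.

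For $x=\mathfrak{s}(x_1\cdots x_n)\rhd c$ I would expand telescopically, exactly as in the proof of Proposition~\ref{id-D_in_I}:
\[
x-\pi_H(x)=\sum_{i=1}^{n}\mathfrak{s}\bigl(\pi_H(x_1)\cdots\bigl(x_i-\pi_H(x_i)\bigr)\cdots x_n\bigr)\rhd c .
\]
Since $|x_i|_V<|x|_V$, the induction hypothesis gives $x_i-\pi_H(x_i)\in\mathcal{I}$; and because $\mathcal{I}$ is a two-sided ideal for $\rhd$, it is stable under substitution into the $\rhd$-expression $\mathfrak{s}(\cdots)\rhd c$ (each $x_i$ enters it linearly), so $x-\pi_H(x)\in\mathcal{I}$.

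For $x=[x_1,x_2,x_3]$ I would split $x-\pi_H(x)=A+B$ with
\[
A=[x_1,x_2,x_3]-[\pi_H(x_1),\pi_H(x_2),\pi_H(x_3)],\qquad B=w-\mathfrak{H}(w),\quad w:=[\pi_H(x_1),\pi_H(x_2),\pi_H(x_3)].
\]
For $A$ I would telescope over the three slots; as $|x_i|_V<|x|_V$ the induction hypothesis yields $x_i-\pi_H(x_i)\in\mathcal{I}$, and since the triple bracket is a multilinear polynomial in $\rhd$, a triple bracket with one argument in the $\rhd$-ideal $\mathcal{I}$ again lies in $\mathcal{I}$; hence $A\in\mathcal{I}$. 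For $B$: by Definition~\ref{def:Hall_map}, $\mathfrak{H}$ is a linear projection with $\ker(\mathfrak{H})=\mathcal{M}$, the ideal of $\Alg_3(\Bar{\mathcal{D}})$ generated by the Lie triple relations \refp{eq:LTS1}--\refp{eq:LTS3}, so $B=(\mathrm{id}-\mathfrak{H})(w)\in\mathcal{M}$. It then remains to check that the natural trilinear-bracket homomorphism from $\Alg_3(\Bar{\mathcal{D}})$ into $\Alg(\mathcal{C})$ (the ternary bracket being the associator bracket, the letters being their honest expressions in $\Alg(\mathcal{C})$) carries $\mathcal{M}$ into $\mathcal{I}$. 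A homomorphism maps the ideal generated by a set into the ideal generated by the images, so it is enough to treat the three generators: \refp{eq:LTS1} holds identically in $\Alg(\mathcal{C})$ by the definition of the triple bracket; \refp{eq:LTS2} is precisely \refp{eq:LAT1}, a defining generator of $\mathcal{I}$; and \refp{eq:LTS3} follows from \refp{eq:LAT2} by the computation already invoked in Lemma~\ref{lemma:LTS}, hence lies in $\mathcal{I}$. Since $\mathcal{I}$ is itself closed under forming ternary brackets in any slot (the bracket being built from $\rhd$ and $\mathcal{I}$ a $\rhd$-ideal), the image of $\mathcal{M}$ lies in $\mathcal{I}$; therefore $B\in\mathcal{I}$ and $x-\pi_H(x)=A+B\in\mathcal{I}$.

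The step I expect to be the crux is the term $B$: one must keep cleanly apart the two effects of $\pi_H$ --- the recursive normalization of the ``letters'' $x_i$ (absorbed by the telescoping and the induction hypothesis) and the Hall rewriting of the outer iterated bracket (controlled by identifying $\mathrm{id}-\mathfrak{H}$ with projection onto $\mathcal{M}$) --- and then argue that the abstract Hall-rewriting ideal $\mathcal{M}\subset\Alg_3(\Bar{\mathcal{D}})$ is sent inside $\mathcal{I}\subset\Alg(\mathcal{C})$. Equivalently, and perhaps more concretely, one may note that $\mathfrak{H}(w)$ arises from $w$ by finitely many applications of the rewriting rules built from \refp{eq:LTS1}--\refp{eq:LTS3}, each of which alters the value in $\Alg(\mathcal{C})$ only by an element of $\mathcal{I}$. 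All the remaining steps are the routine bookkeeping already carried out for $\pi_D$ in Proposition~\ref{id-D_in_I}.
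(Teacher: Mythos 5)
Your proposal is correct and follows essentially the same route as the paper's proof: induction on $|x|_V$, the telescoping decomposition in each case, and the split $x-\pi_H(x)=A+B$ with $B=w-\mathfrak{H}(w)$. The only difference is that you spell out why $w-\mathfrak{H}(w)\in\mathcal{M}$ lands in $\mathcal{I}$ (via the images of the generators \refp{eq:LTS1}--\refp{eq:LTS3}), a step the paper compresses into ``by the definition of $\mathfrak{H}$''; your added justification is sound.
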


\begin{proof}
If $|x|_V = 1$, we have $x-\pi_H(x) = 0 \in \mathcal{I}$ by definition, since $x \in \mathcal{C}$. Assume the proposition holds for all $x$ with $|x|_V<k$, then consider the case $|x|_V=k$. %We may assume $x\in \hat{\mathfrak{D}}$.
If $x = \mathfrak{s}(x_1 \cdots x_n) \rhd c$, we have
\[
x-\pi_H(x) = \sum_{i=1}^n \mathfrak{s}(\pi_H(x_1) \cdots (x_i - \pi_H(x_i)) \cdots x_n) \rhd c,
\]
where $|x_i|_V <k$, hence $x_i - \pi_H(x_i) \in \mathcal{I}$ by assumption, and we get $x-\pi_H(x) \in \mathcal{I}$. If $x = [x_1 ,x_2, x_3]$, we have
\begin{gather*}
 x- \pi_H(x) = [x_1, x_2, x_3] - \mathfrak{H}([\pi_H(x_1), \pi_H(x_2), \pi_H(x_3)]) \\ \hphantom{x- \pi_H(x)}{}
 = [x_1, x_2, x_3] - [\pi_H(x_1), \pi_H(x_2), \pi_H(x_3)] + [\pi_H(x_1), \pi_H(x_2), \pi_H(x_3)]\\ \hphantom{x- \pi_H(x)=}{}
 - \mathfrak{H}([\pi_H(x_1), \pi_H(x_2), \pi_H(x_3)]) \\ \hphantom{x- \pi_H(x)}{}
 = [x_1 - \pi_H(x_1), x_2 , x_3] + [\pi_H(x_1), x_2 - \pi_H(x_2), x_3]\\ \hphantom{x- \pi_H(x)=}{}
 + [\pi_H(x_1), \pi_H(x_2), x_3 - \pi_H(x_3)]+ ( [\pi_H(x_1), \pi_H(x_2), \pi_H(x_3)] \\ \hphantom{x- \pi_H(x)=}{}
 - \mathfrak{H}([\pi_H(x_1), \pi_H(x_2), \pi_H(x_3)]) ).
\end{gather*}
Since $x_i - \pi_H(x_i) \in \mathcal{I}$ by assumption and
\[
[\pi_H(x_1), \pi_H(x_2), \pi_H(x_3)] - \mathfrak{H}([\pi_H(x_1), \pi_H(x_2), \pi_H(x_3)]) \in \mathcal{I}
\]
 by the definition of $\mathfrak{H}$, we conclude that $x-\pi_H(x) \in \mathcal{I}$. The proposition follows by induction.
\end{proof}

Let $\phi := \pi_H \circ \pi_D$.

\begin{Lemma}\label{lemma:ker}
The ideal $\mathcal{I}$ is the kernel of $\phi$.
%$\ker(\phi) = \I$.
%Let $x \in \Alg(\mathcal{C})$. If $x \in \mathcal{I}$ then $\phi(x)=0$.
\end{Lemma}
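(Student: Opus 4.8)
The goal is to show $\ker(\phi) = \mathcal{I}$, where $\phi = \pi_H \circ \pi_D$. The plan is to prove the two inclusions separately, using the infrastructure already assembled.

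\textbf{The inclusion $\mathcal{I} \subseteq \ker(\phi)$.} First I would observe that for any $x \in \Alg(\mathcal{C})$ we have $x - \phi(x) = (x - \pi_D(x)) + (\pi_D(x) - \pi_H(\pi_D(x)))$. The first summand lies in $\mathcal{I}$ by Proposition~\ref{id-D_in_I}, and the second lies in $\mathcal{I}$ by Proposition~\ref{id-h_in_I} applied to $\pi_D(x) \in \Img(\pi_D)$; hence $x - \phi(x) \in \mathcal{I}$ for all $x$. Now it suffices to check that $\phi$ annihilates the generators of $\mathcal{I}$, namely the elements coming from \refp{eq:LAT1} and \refp{eq:LAT2}. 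For a generator $g$ of \refp{eq:LAT1}, i.e.\ $g = [x,y,z]+[y,z,x]+[z,x,y]$: applying $\pi_D$ produces an element of $\Img(\pi_D) \subset \Alg_3(\Bar{\mathcal{D}})$ which is again a cyclic sum of triple-brackets (since $\pi_D$ commutes with the triple bracket on the relevant inputs, up to the lower-order corrections already shown to lie in $\mathcal{I}$), and then $\pi_H$, which incorporates the Hall map $\mathfrak{H}$, kills cyclic-sum and skew-symmetry relations by Proposition~\ref{prop:LTS_Hall} (these are exactly \refp{eq:LTS1}--\refp{eq:LTS2}, which $\mathfrak{H}$ sends to zero since $\ker(\mathfrak{H}) = \mathcal{M}$). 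For a generator of \refp{eq:LAT2}, i.e.\ $w\rhd[x,y,z] - [w\rhd x,y,z] - [x,w\rhd y,z] - [x,y,w\rhd z]$: here the point of $\pi_D$ is precisely that its action on $\mathfrak{s}(\cdots)\rhd[y_1,y_2,y_3]$ is the Leibniz expansion over all ordered set-partitions $I,J,K$, which is the higher-arity generalization of \refp{eq:LAT2}; so $\pi_D$ already sends \refp{eq:LAT2}-generators into the span of \refp{eq:LAT1}-type and Lie-triple-type relations, which $\pi_H$ then kills. Making this precise is the bulk of the work; one cleanly reduces it by induction on $|x|_V$, exactly paralleling the proofs of Propositions~\ref{id-D_in_I} and~\ref{id-h_in_I}, using Lemma~\ref{Lemma_D_property} for the \refp{eq:LAT2} case and the definition of $\mathfrak{H}$ for the \refp{eq:LTS1}--\refp{eq:LTS3} cases.

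\textbf{The inclusion $\ker(\phi) \subseteq \mathcal{I}$.} Here I would use that $\phi$ is a linear projection onto $\spn\{\mathcal{B}\}$ (this is the content of the companion lemma establishing $\Img(\phi) = \spn\{\mathcal{B}\}$, which I may assume is proven alongside; in any case $\phi^2 = \phi$ follows because $\pi_D$ and $\pi_H$ fix their own images and $\pi_H \pi_D \pi_H \pi_D = \pi_H \pi_D$ by the projection identities, since $\Img(\pi_H\pi_D) \subseteq \Img(\pi_D)$ and $\pi_D$ fixes $\Bar{\mathcal{D}}$-elements while $\pi_H$ fixes Hall-reduced elements). Given $\phi^2 = \phi$, if $x \in \ker(\phi)$ then $x = x - \phi(x) \in \mathcal{I}$ by the identity established in the first part. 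This direction is therefore short once $\phi^2 = \phi$ is in hand.

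\textbf{Main obstacle.} The genuinely delicate step is showing $\phi$ vanishes on the \refp{eq:LAT2}-generators: one must track how $\pi_D$ interacts with an inner $\rhd$ that is itself sitting inside a symmetrised forest, and verify that the ordered-set-partition expansion absorbs the discrepancy into $\mathcal{I}$ rather than producing a genuinely new term. This is where Lemma~\ref{Lemma_D_property} does the heavy lifting, and the induction must be set up carefully so that the $w\rhd(\text{block})$ interactions in lower blocks are handled by the inductive hypothesis. A secondary subtlety is confirming $\pi_H$ is well-defined on $\Img(\pi_D)$ and that $\mathfrak{H}$ applied block-wise is consistent with the Lie-triple relations globally rather than merely formally — but this is controlled by $(\Img(\pi_D),[\noarg,\noarg,\noarg]) \subset \Alg_3(\Bar{\mathcal{D}})$, which was noted when $\pi_H$ was defined.
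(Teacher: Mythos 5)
Your second inclusion, $\ker(\phi)\subseteq\mathcal{I}$, is correct and coincides with the paper's argument: once $x-\phi(x)\in\mathcal{I}$ is known from Propositions \ref{id-D_in_I} and \ref{id-h_in_I}, the hypothesis $\phi(x)=0$ gives $x=x-\phi(x)\in\mathcal{I}$ immediately. (You do not actually need $\phi^2=\phi$ or the identification $\Img(\phi)=\spn\{\mathcal{B}\}$ for this direction, so that part of your discussion is superfluous.)

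The gap is in the first inclusion. You reduce $\mathcal{I}\subseteq\ker(\phi)$ to the claim that $\phi$ annihilates the \emph{generators} of $\mathcal{I}$, namely the relations \refp{eq:LAT1} and \refp{eq:LAT2}. But $\mathcal{I}$ is the two-sided ideal generated by these relations in a nonassociative algebra: a general element is a linear combination of expressions obtained by repeatedly multiplying generators on either side of $\rhd$ by arbitrary elements. Since $\phi$ is only a linear map and not an algebra homomorphism, knowing $\phi(g)=0$ for each generator $g$ does not yield $\phi(a\rhd g)=0$ or $\phi(g\rhd a)=0$, so the reduction to generators is unjustified, and that reduction is precisely where the difficulty of the lemma lives. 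The paper avoids this entirely by a structurally different argument: Step~1 of its proof shows that every element of $\Img(\phi)$ is in a normal form containing no subterm of the shape $(y_1\cdots y_n)\rhd[r_1,r_2,r_3]$ and no non-Hall triple bracket, and hence that a nonzero element of $\Img(\phi)$ cannot lie in $\mathcal{I}$; in other words $\Img(\phi)\cap\mathcal{I}=\{0\}$. Then for $x\in\mathcal{I}$ one gets $\phi(x)=x-(x-\phi(x))\in\mathcal{I}\cap\Img(\phi)=\{0\}$. If you wish to keep your generator-by-generator route, you would have to prove in addition that $\phi$ kills every ideal element built by multiplying a generator into a larger expression, which amounts to establishing something at least as strong as the paper's Step~1; as written, your argument does not close.
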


\begin{proof}
\textit{Step $1$.} It is clear by the definition of $\pi_D$ that $\phi(x)$ can never contain a term or a~factor of a term on the form
\[
(y_1 \cdots y_n) \rhd [r_1, r_2, r_3]
\]
and hence $\phi(x)$ can never contain a factor satisfying
\[
u \rhd [v,w,z] - [u\rhd v ,w,z] - [v, u \rhd w , z] - [v,w,u \rhd z].
\]
It is also clear by the definition of $\pi_H$ that $\phi(x)$ can not contain a non-Hall triple-bracket as a~term or a factor of a term, hence $\phi(x)$ can never contain a factor of the form
\[
[u,v,w] + [v,w,u] + [w,u,v].
\]
We conclude that either $\phi(x)=0$ or $\phi(x) \notin \mathcal{I}$.

\textit{Step $2$.} Let $x \in \mathcal{I}$ and assume $\phi(x) \neq 0$. By Propositions \ref{id-D_in_I} and \ref{id-h_in_I}, we know that $x-\phi(x)\in \mathcal{I}$, but by step 1 we have $\phi(x) \notin \mathcal{I}$. This is a contradiction and we conclude that $\phi(x)=0$, which proves that $\I \subset \ker(\phi)$.

\textit{Step $3$.} Now, let $x$ be any element in $\ker(\phi)$. By Propositions \ref{id-D_in_I} and \ref{id-h_in_I}, we know that $x- \phi(x) \in \I$, but $\phi(x)=0$, hence $x\in \I$ and $\I \subset \ker(\phi)$.
\end{proof}

\begin{Lemma}\label{lemma:Im}
The image of $\phi$ is equal to the span of $\mathcal{B}$.
 %If $x\in B$, then $\phi(x)=x$
\end{Lemma}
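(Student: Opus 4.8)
The plan is to establish the two inclusions $\spn\{\mathcal{B}\}\subseteq\Img(\phi)$ and $\Img(\phi)\subseteq\spn\{\mathcal{B}\}$, in both cases arguing by induction on the vertex count $|\noarg|_V$ and following the recursive descriptions of $\mathcal{B}=H(S)$ and of the basis $\mathcal{D}$ of $\Alg(\mathcal{C})$.

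\emph{First inclusion.} I would first check that $\pi_D$ fixes every element of $\mathcal{B}$: a colour is fixed by definition; for $b=\mathfrak{s}(b_1\cdots b_n)\rhd c\in S$ the rule defining $\pi_D$ commutes with $\mathfrak{s}$ and with $\rhd c$, so $\pi_D(b)=\mathfrak{s}(\pi_D(b_1)\cdots\pi_D(b_n))\rhd c=b$ by the inductive hypothesis; and for a Hall bracket $b=[b_1,b_2,b_3]\in H(S)$ the rule for $\pi_D$ distributes over the triple bracket, so $\pi_D(b)=[\pi_D(b_1),\pi_D(b_2),\pi_D(b_3)]=b$. Thus $\mathcal{B}\subseteq\Img(\pi_D)$, so $\pi_H$ is defined on $\mathcal{B}$, and the same case analysis shows $\pi_H$ fixes $\mathcal{B}$; the only new input is the bracket case, where $\pi_H(b)=\mathfrak{H}([\pi_H(b_1),\pi_H(b_2),\pi_H(b_3)])=\mathfrak{H}([b_1,b_2,b_3])=b$, which holds because a Hall word over the sub-alphabet $S\subseteq\Bar{\mathcal{D}}$ is automatically a Hall word over $\Bar{\mathcal{D}}$ and is therefore left fixed by $\mathfrak{H}$. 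Consequently $\phi(b)=\pi_H(\pi_D(b))=b$ for all $b\in\mathcal{B}$, whence $\mathcal{B}\subseteq\Img(\phi)$ and $\spn\{\mathcal{B}\}\subseteq\Img(\phi)$.

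\emph{Second inclusion.} By linearity it suffices to show $\phi(x)\in\spn\{\mathcal{B}\}$ for every basis element $x\in\mathcal{D}$, again by induction on $|x|_V$. Colours are immediate. If $x=\mathfrak{s}(x_1\cdots x_n)\rhd c$ then, as above, $\phi(x)=\mathfrak{s}(\phi(x_1)\cdots\phi(x_n))\rhd c$; each $\phi(x_i)$ lies in $\spn\{\mathcal{B}\}$ by induction, and multilinearity of $\mathfrak{s}$ writes $\phi(x)$ as a linear combination of elements $\mathfrak{s}(b_1\cdots b_n)\rhd c$ with $b_i\in\mathcal{B}$, i.e.\ of elements of $S\subseteq H(S)=\mathcal{B}$. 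If $x=\mathfrak{s}(x_1\cdots x_n)\rhd[y_1,y_2,y_3]$, the last rule defining $\pi_D$ rewrites $x$ as a sum of triple brackets $[\mathfrak{s}(x_I)\rhd y_1,\mathfrak{s}(x_J)\rhd y_2,\mathfrak{s}(x_K)\rhd y_3]$ over the triples $(I,J,K)$ of Proposition~\ref{id-D_in_I} (so $x_I$ denotes the subword indexed by $I$), and each of the three entries has strictly fewer vertices than $x$ because $|y_2|_V,|y_3|_V\geq 1$. Propagating $\pi_D$ and then $\pi_H$ through the outer bracket and invoking the defining rule of $\pi_H$ on triple brackets, one obtains $\phi(x)=\sum_{I,J,K}\mathfrak{H}\bigl([\phi(\mathfrak{s}(x_I)\rhd y_1),\phi(\mathfrak{s}(x_J)\rhd y_2),\phi(\mathfrak{s}(x_K)\rhd y_3)]\bigr)$; the three arguments lie in $\spn\{\mathcal{B}\}$ by induction, so by linearity of $\mathfrak{H}$ it is enough to see that $\mathfrak{H}([h_1,h_2,h_3])\in\spn\{\mathcal{B}\}$ whenever $h_1,h_2,h_3\in\mathcal{B}$. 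Each $h_i$ is an iterated triple bracket whose atoms lie in $S$, and the rewriting underlying $\mathfrak{H}$ (based on \refp{eq:LTS1}--\refp{eq:LTS3}) only rearranges those atoms; hence $\mathfrak{H}([h_1,h_2,h_3])$ is a linear combination of iterated triple brackets over $S$ which are Hall over $\Bar{\mathcal{D}}$, and such expressions are exactly the elements of $H(S)=\mathcal{B}$. Combined with the first inclusion, this gives $\Img(\phi)=\spn\{\mathcal{B}\}$.

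\emph{Main difficulty.} The genuine content is the compatibility between the ``global'' Hall machinery---the map $\mathfrak{H}$ built over the large alphabet $\Bar{\mathcal{D}}$ with the order $\prec_\Delta$---and the ``local'' definition $\mathcal{B}=H(S)$. One must verify that $S\subseteq\Bar{\mathcal{D}}$, that the order on $\mathcal{D}$ restricts to the order used in Definition~\ref{def:H3} to build $H(S)$, and therefore that an iterated triple bracket all of whose atoms lie in $S$ is Hall over $\Bar{\mathcal{D}}$ precisely when it is Hall over $S$ (the relevant clauses of Definition~\ref{def:H3} coincide on atoms from $S$), together with the easy observation that the rewriting defining $\mathfrak{H}$ never leaves $\Alg_3(S)$. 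A minor technical point to dispatch first is the degenerate case $n=0$ of the last rule defining $\pi_D$: one checks that it reduces to the plain triple-bracket rule, so that $\pi_D$ genuinely distributes over a standalone triple bracket, as used repeatedly above.
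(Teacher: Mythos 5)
Your proposal is correct and follows essentially the same route as the paper's own proof: show $\phi$ fixes every element of $\mathcal{B}$ by induction on vertex count (giving $\spn\{\mathcal{B}\}\subseteq\Img(\phi)$), then induct over the basis $\mathcal{D}$ to show $\phi(\mathcal{D})\subseteq\spn\{\mathcal{B}\}$, with the bracket case handled by pushing $\pi_D$ and $\pi_H$ through and invoking the Hall map $\mathfrak{H}$. Your additional remarks on the compatibility of $H(S)$ with $\mathfrak{H}$ over $\Bar{\mathcal{D}}$ and on the vertex-count decrease make explicit some points the paper leaves implicit, but the argument is the same.
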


\begin{proof}
By induction on $|\noarg |_V$. %Let $\phi = h \circ D$.
If $x \in \mathcal{B}$ with $|x|_V=1$, then $\phi(x) = x$ since $\phi(c)=c$ for all elements $c \in \mathcal{C}$. %Assume $\phi(x)=x$ for all $x \in \mathcal{B}$ with $|x|_V <k$, and consider an element $x \in \mathcal{B}$ with $|x|_V=k$.
\begin{enumerate}\itemsep=0pt
 \item[(i)] If $x = \mathfrak{s}(x_1 \cdots x_n) \rhd c$, we have by induction
 \[
 \phi(x) = \mathfrak{s}(\phi(x_1) \cdots \phi(x_n))\rhd c = \mathfrak{s}(x_1 \cdots x_n) \rhd c = x.
 \]
 \item[(ii)] If $x = [x_1, x_2, x_3]$, we have by induction
 \[
 \phi(x) = [\phi(x_1),\phi(x_2),\phi(x_3)] = [x_1, x_2, x_3] = x.
 \]
\end{enumerate}
Hence we get $\phi(x)=x$ for all $x \in \mathcal{B}$ which proves that $\mathcal{B} \subset \Img(\phi)$.
Notice that for $|x|_V=1$ we have $\phi(x) \in \spn\{\mathcal{B} \}$. Assume $\phi(y) \in \spn \{\mathcal{B}\}$ for all $y\in \mathcal{D}$ with $|y|_V < k$ and let $x \in \mathcal{D}$ with $|x|_V=k$. If $x = \mathfrak{s}(x_1 \cdots x_n) \rhd c$, we get $\phi(x) = \mathfrak{s}( \phi(x_1) \cdots \phi(x_n)) \rhd c$ and since $|x_i|_V < |x|_V$, we have $\phi(x_i) \in \spn\{ \mathcal{B} \}$ by assumption, and then $\phi(x)$ must be in the span of $\mathcal{B}$.

If $x = \mathfrak{s}(x_1 \cdots x_n) \rhd [y_1, y_2 ,y_3]$, we have
\[
\phi(x) = \sum_{i \in I} \mathfrak{H}([\phi(\alpha_i), \phi(\beta_i), \phi(\gamma_i)]),
\]
where $\alpha_i$, $\beta_i$ and $\gamma_i$ are as described in the definition of $\pi_D$ and $i\in I$ is any indexing of these elements. Since $|\alpha_i|_V<|x|_V$, $|\beta_i|_V<|x|_V$ and $|\gamma_i|_V<|x|_V$ for all $i \in I$, we know that $\phi(\alpha_i) \in \spn \{ \mathcal{B} \}$, $\phi(\beta_i) \in \spn \{ \mathcal{B} \}$ and $\phi(\gamma_i) \in \spn \{ \mathcal{B} \}$ for all $i \in I$, and furthermore, by the definition of $\mathfrak{H}$ we know that $\mathfrak{H}([\phi(\alpha_i), \phi(\beta_i), \phi(\gamma_i)])$ will be a Hall Lie triple element of elements from $\mathcal{B}$, hence it is itself in $\mathcal{B}$.
This means that $\phi(x) \in \spn \{\mathcal{B}\}$ for all $x \in \mathcal{D}$, and since $\mathcal{D}$ is a~basis of $\Alg(\mathcal{C})$, we know that $\Img(\phi) \subset \spn \{\mathcal{B}\}$.
\end{proof}

\begin{proof}[Proof of Theorem~\ref{th:LATbasis}]%{\bf(Theorem~\ref{th:LATbasis})}
Since $\phi$ is a linear map, we have
\[
\Img(\phi) \cong \Alg(\mathcal{C})/\ker(\phi).
\]
By Lemma \ref{lemma:Im}, $\Img(\phi) = \spn \{\mathcal{B} \}$, and by Lemma \ref{lemma:ker}, $\ker(\phi) = \I$. By definition, $\LAT(\mathcal{C}) \cong \Alg(\mathcal{C})/ \I$ and we have
\[
\LAT(\mathcal{C}) \cong \spn \{ \mathcal{B} \}.
\]
Furthermore, by Corollary \ref{cor:B_lin_ind} we know that the set $\mathcal{B}$ is linearly independent, hence $\mathcal{B}$ is a basis for the free Lie admissible triple algebra.
\end{proof}

\section{Embedding of LAT into post-Lie}\label{sec:PL-emb}

Let $\mathcal{L}$ be a Lie algebra with an involutive automorphism $\sigma \colon \mathcal{L} \rightarrow \mathcal{L}$, that is an isomorphism satisfying $\sigma^2(x) = x$ for all $x \in \mathcal{L}$. Then $\mathcal{L}$ decomposes into $\mathcal{L} = \mathcal{L}_- \oplus \mathcal{L}_+$ where $\mathcal{L}_- := \{ x \in \mathcal{L} \mid \sigma(x) = -x \}$ and $\mathcal{L}_+ := \{ x \in \mathcal{L} \mid \sigma(x) = x \}$. We observe that
\[
[\mathcal{L}_+ , \mathcal{L}_+] \subset \mathcal{L}_+, \qquad [\mathcal{L}_- , \mathcal{L}_+] \subset \mathcal{L}_-, \qquad [\mathcal{L}_- , \mathcal{L}_- ] \subset \mathcal{L}_+.
\]
In other words, $\mathcal{L}$ is a $\mathbb{Z}_2$-graded Lie algebra, and we may define a triple-bracket
\[
[\noarg, \noarg, \noarg ] \colon \ \mathcal{L}_- \times \mathcal{L}_- \times \mathcal{L}_- \rightarrow \mathcal{L}_-
\]
by $[x,y,z] := [[x,y],z]$. This makes $\mathcal{L}_-$ into a Lie triple system, defined in Lemma \ref{lemma:LTS}. The following proposition is originally due to Jacobson \cite{Jacobson1951}, and improved by Yamaguti \cite{yamaguti1957}.

\begin{Proposition}\label{prop:LTSembedding}
Let $T$ be a Lie triple system. For $x,y \in T$ let $D_{x,y}$ be the linear transformation $D_{x,y}(z) = [x,y,z]$. Let $\Der(T)$ be the subspace of all linear transformations of $T$ spanned by $D_{x,y}$, and let $\mathcal{L}$ be the direct sum vector space $\Der(T) \oplus T$. Then $\mathcal{L}$ becomes a $\mathbb{Z}_2$-graded Lie algebra with the bracket
\[
[(A,x),(B,y)] = (A\circ B - B \circ A + D_{x,y} , A(y) - B(x)).
\]
The map $\sigma$ defined by $\sigma((A,x)) = (A,-x)$ is an involutive automorphism.
\end{Proposition}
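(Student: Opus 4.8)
The plan is to push everything into a handful of identities inside the associative algebra $\End(T)$, after which the $\mathbb{Z}_2$-grading and the automorphism $\sigma$ come out almost for free. First I would record the two elementary facts about the operators $D_{x,y}$. Skew-symmetry \eqref{eq:LTS1} gives $D_{x,y}=-D_{y,x}$, and the Jacobi-type relation \eqref{eq:LTS3} says exactly that each $D_{x,y}$ is a derivation of the triple product, $D_{x,y}[u,v,w]=[D_{x,y}u,v,w]+[u,D_{x,y}v,w]+[u,v,D_{x,y}w]$; hence every element of $\Der(T)=\spn\{D_{x,y}\}$ is a derivation. The second fact is: for any derivation $A$ of $T$,
\[
[A,D_{x,y}]:=A\circ D_{x,y}-D_{x,y}\circ A = D_{Ax,y}+D_{x,Ay},
\]
which follows by evaluating both sides on $z$ and using that $A$ is a derivation. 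Taking $A=D_{a,b}$ shows $[D_{a,b},D_{c,d}]\in\Der(T)$, so $\Der(T)$ is a Lie subalgebra of $\End(T)$; in particular the first component $A\circ B-B\circ A+D_{x,y}$ of the proposed bracket genuinely lies in $\Der(T)$, so the bracket is a well-defined bilinear map $\mathcal{L}\times\mathcal{L}\rightarrow\mathcal{L}$. Skew-symmetry of this bracket is then immediate from $D_{x,y}=-D_{y,x}$.

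Next comes the Jacobi identity, which is the one step requiring genuine care. Writing $X_i=(A_i,x_i)$, I would expand $\sum_{\circlearrowleft(1,2,3)}[[X_1,X_2],X_3]$ and handle the $\Der(T)$-component and the $T$-component separately. In the $T$-component, the terms $\sum_{\circlearrowleft}[x_1,x_2,x_3]$ vanish by \eqref{eq:LTS2}, while the remaining terms are compositions of the shape $A_iA_jx_k$, which cancel in pairs once the cyclic sum is written out. In the $\Der(T)$-component, the terms $\sum_{\circlearrowleft}[[A_1,A_2],A_3]$ vanish by the ordinary Jacobi identity for commutators in $\End(T)$; the cross terms $\sum_{\circlearrowleft}[D_{x_i,x_j},A_k]$, rewritten via the identity $[A,D_{x,y}]=D_{Ax,y}+D_{x,Ay}$ above and the relation $D_{u,v}=-D_{v,u}$, cancel exactly against the terms $\sum_{\circlearrowleft}D_{A_ix_j-A_jx_i,\,x_k}$. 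Thus $\mathcal{L}$ is a Lie algebra.

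Finally I would identify $\mathcal{L}_+=\{(A,0)\}\cong\Der(T)$ and $\mathcal{L}_-=\{(0,x)\}\cong T$ as the $\pm1$ eigenspaces of $\sigma$. The bracket formula immediately yields $[\mathcal{L}_+,\mathcal{L}_+]\subset\mathcal{L}_+$, $[\mathcal{L}_+,\mathcal{L}_-]\subset\mathcal{L}_-$, $[\mathcal{L}_-,\mathcal{L}_-]\subset\mathcal{L}_+$, so $\mathcal{L}$ is $\mathbb{Z}_2$-graded; and since $[(0,x),(0,y)]=(D_{x,y},0)$ and then $[(D_{x,y},0),(0,z)]=(0,D_{x,y}(z))=(0,[x,y,z])$, the induced triple bracket $[[x,y],z]$ on $\mathcal{L}_-$ recovers the original Lie triple system $T$. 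For $\sigma$: clearly $\sigma^2=\id$, and
\[
\sigma[(A,x),(B,y)] = \bigl(A\circ B-B\circ A+D_{x,y},\,-(A(y)-B(x))\bigr) = \bigl(A\circ B-B\circ A+D_{x,y},\,A(-y)-B(-x)\bigr) = [\sigma(A,x),\sigma(B,y)],
\]
so $\sigma$ is an involutive automorphism. The only real obstacle is the bookkeeping in the Jacobi verification, where the $\Der(T)$-component spawns about a dozen $D$-terms whose cancellation must be tracked using all three Lie triple axioms together with $D_{u,v}=-D_{v,u}$; nothing else in the argument is more than a direct check against the bracket formula.
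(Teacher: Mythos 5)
Your proposal is correct, but it is worth noting that the paper does not actually prove this proposition: its ``proof'' is a one-line citation to Loos (Proposition 2.3 of \emph{Symmetric spaces I}), so you have supplied a self-contained verification where the authors deferred to the literature. Your organization is the standard and efficient one: everything reduces to the single operator identity $A\circ D_{x,y}-D_{x,y}\circ A=D_{Ax,y}+D_{x,Ay}$ for a derivation $A$ (which is exactly what \eqref{eq:LTS3} encodes when $A=D_{u,v}$), together with $D_{x,y}=-D_{y,x}$ from \eqref{eq:LTS1}. This simultaneously gives well-definedness of the bracket (closure of $\Der(T)$ under commutators), skew-symmetry, and the cancellation of the cross terms in the $\Der(T)$-component of the Jacobi sum; the $T$-component needs only \eqref{eq:LTS2} plus the pairwise cancellation of the $A_iA_jx_k$ terms, and the grading and the automorphism property of $\sigma$ are immediate from the bracket formula. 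I checked the cyclic cancellations you describe and they do close up exactly as claimed. One small imprecision: you say the $\Der(T)$-component cancellation uses ``all three Lie triple axioms,'' but \eqref{eq:LTS2} enters only in the $T$-component; the $\Der(T)$-component needs only \eqref{eq:LTS1} and \eqref{eq:LTS3}. This does not affect correctness. What your approach buys over the paper's is a proof readable without consulting Loos, at the cost of the explicit bookkeeping; what the citation buys is brevity and the observation that this is a classical result going back to Jacobson and Yamaguti rather than something new to this paper.
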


\begin{proof}
See \cite[Proposition 2.3]{Loos1}.
\end{proof}

\begin{Remark}\label{rmrk:82}
Let $\mathcal{L} = \Der(T) \oplus T$ be as in the proposition above, but define the bracket by
\[
[(A,x),(B,y)] = (-A\circ B + B \circ A + D_{x,y}, B(x) - A(y)).
\]
This will also define a Lie algebra, and $\sigma\colon (A,x) \mapsto (A,-x)$ is an involutive automorphism of this Lie algebra as well.
\end{Remark}

It is natural to ask how this theorem extends to Lie admissible triple algebras. The relation between LAT and $\mathbb{Z}_2$-graded Lie algebras have been examined in \cite{GS_00,Sokolov_17}. This embedding can be further specialized by demanding that the $\mathbb{Z}_2$-graded Lie algebra has a post-Lie structure, and that the products of the Lie admissible triple algebra agrees with the post-Lie product.

\begin{Proposition}\label{prop:Lie_adm_in_psot-Lie}
Let $(A,[\noarg,\noarg],\blacktriangleright)$ be a post-Lie algebra. Define a product $\rhd$ by
\[
x \rhd y = x \blacktriangleright y + \frac{1}{2}[x,y],
\]
then $(A,\rhd)$ is Lie admissible. The skew associator is
\[
\operatorname{ass}_{\rhd}(x,y,z) - \operatorname{ass}_{\rhd}(y,x,z) = - \frac{1}{4}[[x,y],z].
\]
\end{Proposition}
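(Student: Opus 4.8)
The plan is to verify the two claims by direct computation, using only the post-Lie axioms. Write $\rhd$ in terms of $\vartr$ and $[\noarg,\noarg]$ via $x\rhd y = x\vartr y + \tfrac12[x,y]$, and recall the two post-Lie relations: $[x,y]\vartr z = [x,y,z]_{\vartr}$ (where I abbreviate the $\vartr$-associator bracket) and the derivation property $x\vartr[y,z] = [x\vartr y,z]+[y,x\vartr z]$, together with the fact that $[\noarg,\noarg]$ is a Lie bracket.

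First I would compute the skew $\rhd$-associator. Expand $\operatorname{ass}_{\rhd}(x,y,z) = x\rhd(y\rhd z) - (x\rhd y)\rhd z$ by substituting the formula for $\rhd$ everywhere. This produces a sum of terms: a pure $\vartr$-associator $\operatorname{ass}_{\vartr}(x,y,z)$; mixed terms of the form $\tfrac12 x\vartr[y,z]$, $\tfrac12[x,y\vartr z]$, $-\tfrac12[x\vartr y,z]$, $-\tfrac12[x,y]\vartr z$; and a pure bracket term $\tfrac14[x,[y,z]]-\tfrac14[[x,y],z]$. Then antisymmetrize in $x\leftrightarrow y$. The pure $\vartr$-associator part gives the triple bracket $[x,y,z]_{\vartr} = [x,y]\vartr z$ by the first post-Lie relation. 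The mixed terms: $\tfrac12(x\vartr[y,z]-y\vartr[x,z])$ simplifies using the derivation property, and one checks the various mixed pieces combine to cancel the $[x,y]\vartr z$ contribution — here is where the Leibniz/derivation axiom does the real work. The pure bracket part antisymmetrizes to $-\tfrac14([[x,y],z]-[[y,x],z]) + \tfrac14([x,[y,z]]-[y,[x,z]])$; using skew-symmetry and the Jacobi identity this should collapse to $-\tfrac14[[x,y],z]$ (one may need to fold in a leftover $\tfrac12[x,y]\vartr z$-type term via the first post-Lie relation to get full cancellation of the $\vartr$-contributions).

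Having shown $\operatorname{ass}_{\rhd}(x,y,z)-\operatorname{ass}_{\rhd}(y,x,z) = -\tfrac14[[x,y],z]$, the Lie admissibility is then immediate: by definition the LAT/Lie-admissible relation \refp{eq:LAT1} asks that $[x,y,z]+[y,z,x]+[z,x,y]\equiv 0$ where $[x,y,z]$ here is the skew associator just computed, i.e.\ $-\tfrac14[[x,y],z]$. Summing cyclically gives $-\tfrac14\bigl([[x,y],z]+[[y,z],x]+[[z,x],y]\bigr)$, which vanishes by the Jacobi identity for the Lie bracket $[\noarg,\noarg]$. Equivalently, one invokes the criterion stated in Section~\ref{sec2} that $(A,\rhd)$ is Lie admissible iff $x\rhd y - y\rhd x$ is a Lie bracket; here $x\rhd y - y\rhd x = x\vartr y - y\vartr x + [x,y]$, which is exactly the second Lie bracket attached to a post-Lie algebra (as recalled in the post-Lie bullet of Section~\ref{sec2}), so Lie admissibility holds.

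The main obstacle is bookkeeping in the skew-associator expansion: there are many mixed $\vartr$--bracket terms, and getting every one of the $\vartr$-containing contributions to cancel requires careful and repeated use of both post-Lie relations (not just one), plus Jacobi and skew-symmetry for the bracket. It is a finite, purely formal calculation with no conceptual subtlety, so I would organize it by first collecting all pure-$\vartr$ terms, then all mixed terms, then all pure-bracket terms, simplifying each group separately before combining; the only place an error is likely is a dropped sign or a missed application of the derivation rule $x\vartr[y,z]=[x\vartr y,z]+[y,x\vartr z]$.
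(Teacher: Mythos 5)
Your computation is correct, but note that the paper does not actually carry it out: its ``proof'' of Proposition~\ref{prop:Lie_adm_in_psot-Lie} is a citation to \cite[Proposition~2.7]{MK_Lundervold_2013}, so your direct verification is a self-contained substitute rather than a parallel of the paper's argument. Checking your outline against the expansion: writing $\operatorname{ass}_{\rhd}(x,y,z)=\operatorname{ass}_{\blacktriangleright}(x,y,z)+\tfrac12 x\blacktriangleright[y,z]+\tfrac12[x,y\blacktriangleright z]-\tfrac12[x\blacktriangleright y,z]-\tfrac12[x,y]\blacktriangleright z+\tfrac14[x,[y,z]]-\tfrac14[[x,y],z]$, the derivation axiom reduces the first three mixed terms to $\tfrac12[y,x\blacktriangleright z]+\tfrac12[x,y\blacktriangleright z]$, which is symmetric in $x,y$ and dies under antisymmetrization; the $-\tfrac12[x,y]\blacktriangleright z$ terms double to $-[x,y]\blacktriangleright z$ and exactly cancel the $[x,y]\blacktriangleright z$ coming from the first post-Lie relation applied to the skew $\blacktriangleright$-associator (this is the ``leftover term'' you flag in your parenthetical, and it does work out cleanly); and the pure bracket terms give $-\tfrac12[[x,y],z]+\tfrac14[[x,y],z]=-\tfrac14[[x,y],z]$ via Jacobi. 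So every cancellation you anticipate actually occurs. For the Lie-admissibility claim your second argument is the more economical one: $x\rhd y-y\rhd x=x\blacktriangleright y-y\blacktriangleright x+[x,y]$ is precisely the second Lie bracket of the post-Lie algebra (the bracket $[X,Y]_J$ in the connection-algebra bullet of Section~\ref{sec2}), and Lie admissibility is equivalent to the commutator being a Lie bracket; alternatively, once the skew associator is identified as $-\tfrac14[[x,y],z]$, the cyclic sum vanishes by Jacobi. Either route is valid. What your approach buys is a proof readable without consulting the reference; what the paper's citation buys is brevity and consistency with where the result was first established.
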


\begin{proof}
See \cite[Proposition 2.7]{MK_Lundervold_2013}.
\end{proof}

\begin{Proposition}
Let $(A,[\noarg,\noarg],\blacktriangleright)$ be a post-Lie algebra. Define a product $\rhd$ by
\[
x \rhd y = x \blacktriangleright y + \frac{1}{2}[x,y],
\]
then $(A,\rhd)$ is a Lie admissible triple algebra.
\end{Proposition}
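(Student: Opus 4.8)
The strategy is to take the Lie admissible product $\rhd$ constructed in Proposition~\ref{prop:Lie_adm_in_psot-Lie} and verify the two LAT axioms \refp{eq:LAT1} and \refp{eq:LAT2}. Axiom \refp{eq:LAT1} is immediate: Proposition~\ref{prop:Lie_adm_in_psot-Lie} already gives us that $(A,\rhd)$ is Lie admissible, which is exactly \refp{eq:LAT1} (the cyclic sum of the triple-bracket vanishes). So the entire content is in establishing \refp{eq:LAT2}, namely that $w \rhd [x,y,z] = [w\rhd x, y, z] + [x, w\rhd y, z] + [x, y, w\rhd z]$ for the triple-bracket $[x,y,z] = \operatorname{ass}_\rhd(x,y,z) - \operatorname{ass}_\rhd(y,x,z)$.

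**Key computation.** By Proposition~\ref{prop:Lie_adm_in_psot-Lie}, the triple-bracket of $\rhd$ is $[x,y,z] = -\tfrac14 [[x,y],z]$, i.e.\ it is, up to the harmless nonzero scalar $-\tfrac14$, the triple-bracket $[[\noarg,\noarg],\noarg]$ of the underlying Lie bracket. Since the scalar $-\tfrac14$ cancels from both sides of \refp{eq:LAT2}, it suffices to show
\[
w \rhd [[x,y],z] = [[w\rhd x, y],z] + [[x, w\rhd y],z] + [[x,y], w\rhd z].
\]
Now substitute $w \rhd (\noarg) = w \blacktriangleright (\noarg) + \tfrac12 [w, \noarg]$ throughout. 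The left side becomes $w\blacktriangleright[[x,y],z] + \tfrac12[w,[[x,y],z]]$. For the $w\blacktriangleright$ part, the post-Lie relations give that $\blacktriangleright$ acts as a derivation of the Lie bracket $[\noarg,\noarg]$ (this is one of the post-Lie axioms, $x\blacktriangleright[y,z] = [x\blacktriangleright y, z] + [y, x\blacktriangleright z]$), so $w\blacktriangleright[[x,y],z] = [[w\blacktriangleright x, y],z] + [[x, w\blacktriangleright y],z] + [[x,y], w\blacktriangleright z]$ expands exactly into the derivation pattern. For the $\tfrac12[w,\noarg]$ part, the Jacobi identity of $[\noarg,\noarg]$ gives $[w,[[x,y],z]] = [[[w,x],y],z] + [[x,[w,y]],z] + [[x,y],[w,z]]$, again the derivation pattern. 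Adding these two and collecting terms, each slot $\cdot$ in the triple-bracket receives $w\blacktriangleright(\cdot) + \tfrac12[w,\cdot] = w\rhd(\cdot)$, which is precisely the right-hand side. (One must also note that $w\rhd x$ need not lie in any special subspace, but the computation is purely formal and valid for all $w,x,y,z\in A$.)

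**Conclusion.** Thus both \refp{eq:LAT1} and \refp{eq:LAT2} hold, so $(A,\rhd)$ is a Lie admissible triple algebra. Alternatively, and perhaps more conceptually, one can invoke Lemma~\ref{lemma:LTS} in reverse: the triple-bracket $[[\noarg,\noarg],\noarg]$ of any Lie algebra is a Lie triple system (skew-symmetry and the two identities \refp{eq:LTS2}, \refp{eq:LTS3} all follow from bilinearity, skew-symmetry and Jacobi), and \refp{eq:LAT2} is the statement that left multiplication $w\rhd(\noarg)$ is a derivation of this LTS, which is what the derivation-of-the-Lie-bracket post-Lie axiom plus Jacobi deliver.

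**Main obstacle.** There is no serious obstacle: once Proposition~\ref{prop:Lie_adm_in_psot-Lie} is in hand, the only work is the bookkeeping of the derivation identity, and the single nontrivial input is recognizing that the post-Lie axiom $x\blacktriangleright[y,z] = [x\blacktriangleright y,z]+[y,x\blacktriangleright z]$ and the Jacobi identity are the two ``halves'' of the needed derivation property, assembling coherently because $w\rhd = w\blacktriangleright + \tfrac12[w,\cdot]$. The one point requiring a line of care is checking that the scalar $-\tfrac14$ from Proposition~\ref{prop:Lie_adm_in_psot-Lie} is inherited by $[x,y,z]$ in the present sign convention and that it indeed cancels uniformly in \refp{eq:LAT2}.
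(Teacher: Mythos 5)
Your proposal is correct and follows essentially the same route as the paper's proof: both reduce via Proposition \ref{prop:Lie_adm_in_psot-Lie} to the identity $[x,y,z]=-\tfrac14[[x,y],z]$, then verify \refp{eq:LAT2} by splitting $w\rhd(\noarg)=w\blacktriangleright(\noarg)+\tfrac12[w,\noarg]$ and pairing the post-Lie derivation axiom with the Jacobi identity so that each slot reassembles into $w\rhd(\cdot)$. The only cosmetic difference is that you cancel the scalar $-\tfrac14$ up front while the paper carries it through the computation.
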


\begin{proof}
Define
\[
[x,y,z] := \operatorname{ass}_{\rhd}(x,y,z) - \operatorname{ass}_{\rhd}(y,x,z).
\]
By Proposition \ref{prop:Lie_adm_in_psot-Lie} above, we know that
\[
[x,y,z]+[y,z,x] + [z,x,y] = -\frac{1}{4}([[x,y],z]+[[y,z],x] + [[z,x],y] ) = 0.
\]
We need to show that $\rhd$ acts as a derivation on $[\noarg,\noarg,\noarg]$,
\begin{gather*}
 w \rhd [x,y,z] = w \blacktriangleright \biggl(-\frac{1}{4}[[x,y],z]\biggr) + \frac{1}{2}[w , -\frac{1}{4}[[x,y],z]] \\ \hphantom{w \rhd [x,y,z] }{}
 = -\frac{1}{4} ([w \blacktriangleright [x,y], z] + [[x,y],w\blacktriangleright z] ) +\frac{1}{2} \biggl(-\frac{1}{4} ([[w,[x,y]],z] + [[x,y],[w,z]] ) \biggr) \\ \hphantom{w \rhd [x,y,z] }{}
 = -\frac{1}{4} \biggl( [[w \blacktriangleright x,y], z] + [[x, w\blacktriangleright y],z] + [[x,y],w\blacktriangleright z] \\ \hphantom{w \rhd [x,y,z] =-\frac{1}{4}}{}
 +\frac{1}{2} ([[[w,x],y],z] + [[x,[w,y]],z] + [[x,y],[w,z]] ) \biggr) \\ \hphantom{w \rhd [x,y,z] }{}
 = -\frac{1}{4} \biggl([[w \blacktriangleright x,y], z] + \frac{1}{2}[[[w,x],y],z] \biggr) - \frac{1}{4} \biggl([[x, w\blacktriangleright y],z] +\frac{1}{2}[[x,[w,y]],z] \biggr) \\ \hphantom{w \rhd [x,y,z] =}{}
 - \frac{1}{4} \biggl( [[x,y],w\blacktriangleright z]+\frac{1}{2}[[x,y],[w,z]] \biggr) \\ \hphantom{w \rhd [x,y,z] }{}
 = [w\rhd x ,y,z] + [x,w\rhd y,z] + [x,y,w\rhd z].
\tag*{\qed}
\end{gather*}
\renewcommand{\qed}{}
\end{proof}

\begin{Example}
Let $(\mathcal{A}, \rhd)$ be a LAT-algebra, hence it is also a Lie admissible algebra. Define $[x,y]=x\rhd y - y \rhd x$ for all $x,y \in \mathcal{A}$, then $(\mathcal{A},[\noarg,\noarg])$ is a Lie algebra. For any Lie algebra if we define $x \blacktriangleright y := [y,x]$, we get a post-Lie algebra, hence $(\mathcal{A},[\noarg,\noarg],\blacktriangleright)$ is a post-Lie algebra where $x\blacktriangleright y = y \rhd x - x \rhd y$.
\end{Example}

This is not the embedding we are interested in, as this is not related to the Lie triple embedding in Proposition \ref{prop:LTSembedding}. What we want is to embed the Lie triple algebra $(\mathcal{A},[\noarg,\noarg,\noarg])$ into a Lie algebra $\mathcal{L}=\Der(\mathcal{A})\oplus \mathcal{A}$, and then extend the Lie admissible triple operator $\rhd$ to an operator~$\blacktriangleright$ on~$\mathcal{L}$ in such a way that
\begin{enumerate}\itemsep=0pt
 \item[(i)] $(\mathcal{L},[\noarg,\noarg],\blacktriangleright)$ is a post-Lie algebra,
 \item[(ii)] and the post-Lie operator reduces to the LAT operator when restricted to $\mathcal{A}$, i.e.,
 \[
 \blacktriangleright \big|_{\mathcal{A}} = \rhd.
 \]
\end{enumerate}
If we let $(\mathcal{L},[\noarg,\noarg])$ be the standard Lie algebra embedding of the Lie triple system $(\mathcal{A},[\noarg,\noarg,\noarg])$, where $(\mathcal{A},\rhd)$ is a LAT algebra. %We want to define an operator $\blacktriangleright$ on $\mathcal{L}$.
Using the post-Lie relations, we see that the extension must satisfy \looseness=-1
\begin{gather*}
 x \blacktriangleright y = x \rhd y, \\
 D_{x,y} \blacktriangleright z = [x,y] \blacktriangleright z = [x,y,z] = D_{x,y} (z), \\
 x \blacktriangleright D_{y,z} = x \blacktriangleright [y,z] = [x\blacktriangleright y,z] + [y,x \blacktriangleright z] = D_{x\blacktriangleright y,z} + D_{y, x \blacktriangleright z} ,\\
 D_{x,y} \blacktriangleright D_{u,v} = [x,y] \blacktriangleright [u,v] = [[x,y,u],v] + [u,[x,y,v]] = D_{[x,y,u],v} + D_{u,[x,y,v]}.
\end{gather*}

\begin{Theorem}\label{th:pL_emb}
Let $(\mathcal{A},\rhd)$ be a Lie admissible triple algebra with triple-bracket
\[
[x,y,z] = \operatorname{ass}_{\rhd}(x,y,z) -\operatorname{ass}_{\rhd}(y,x,z).
\]
For each pair $x,y \in \mathcal{A}$, define
\begin{align*}
& D_{x,y}\colon \ \mathcal{A} \longrightarrow \mathcal{A}, \qquad
 z \longmapsto [x,y,z]
\end{align*}
and let $\Der(\mathcal{A}) = \{ D_{x,y} \mid x,y \in \mathcal{A} \}$. Let $\mathcal{L}=\Der(\mathcal{A}) \oplus \mathcal{A}$, and let $x,y \in \mathcal{A}$ and $A,B \in \Der(\mathcal{A})$. Define a bracket by
\begin{gather*}
 [x,y] = D_{x,y}, \qquad
 [A,x] = -A(x), \qquad
 [A,B] = -A \circ B + B \circ A.
\end{gather*}
%be the standard embedding of the Lie triple system $(\mathcal{A},[\noarg,\noarg,\noarg])$ into a Lie algebra.
Extend $\rhd$ to all of $\mathcal{L}$ by
\begin{gather*}
 x \blacktriangleright y = x \rhd y, \qquad
 A \blacktriangleright x = A(x), \\
 (x \blacktriangleright A)(y) = x \blacktriangleright A(y) - A(x\blacktriangleright y), \qquad
 A \blacktriangleright B = A \circ B - B \circ A.
\end{gather*}
%where $x,y \in \mathcal{A}$ and $A,B \in \Der(\mathcal{A})$.
Then $(\mathcal{L},[\noarg,\noarg],\blacktriangleright)$ is a $\mathbb{Z}_2$-graded post-Lie algebra.
\end{Theorem}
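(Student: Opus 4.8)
The plan is to verify, in turn, that (a) the bracket $[\noarg,\noarg]$ makes $\mathcal{L}$ into a $\mathbb{Z}_2$-graded Lie algebra, (b) the operation $\blacktriangleright$ is a well-defined bilinear map $\mathcal{L}\times\mathcal{L}\to\mathcal{L}$ respecting the grading, and (c) the two post-Lie compatibility relations hold. Step (a) is essentially already available: by Lemma~\ref{lemma:LTS} the triple bracket makes $\mathcal{A}$ into a Lie triple system, the $D_{x,y}$ are then inner derivations of it, and writing a general element of $\mathcal{L}$ as a pair $(A,x)$ one checks that the bracket in the statement is precisely the one of Remark~\ref{rmrk:82} (equivalently the Jacobson--Yamaguti bracket of Proposition~\ref{prop:LTSembedding} with a sign change); hence $[\noarg,\noarg]$ is a Lie bracket, and $[\mathcal{L}_i,\mathcal{L}_j]\subseteq\mathcal{L}_{i+j}$ for $\mathcal{L}_0=\Der(\mathcal{A})$, $\mathcal{L}_1=\mathcal{A}$ by inspection. (Throughout, $\Der(\mathcal{A})$ is understood as the linear span of the $D_{x,y}$.)

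For step (b) the key device is an \emph{operator calculus}: identify $x\in\mathcal{A}$ with the left-multiplication operator $L_x=x\rhd\noarg\in\End(\mathcal{A})$ and every element of $\Der(\mathcal{A})$ with the corresponding element of $\End(\mathcal{A})$. Then the definitions in the statement read $x\blacktriangleright A=[L_x,A]$, $A\blacktriangleright B=[A,B]$, $A\blacktriangleright x=A(x)$, where $[\noarg,\noarg]$ on the right is the ordinary commutator in $\End(\mathcal{A})$; and the Lie bracket $[\noarg,\noarg]$ restricted to $\Der(\mathcal{A})$ is \emph{minus} that commutator. Since the right-hand sides depend only on the operators involved, the one thing to check is that $x\blacktriangleright A$ and $A\blacktriangleright B$ again lie in $\Der(\mathcal{A})$. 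This is exactly where the LAT axioms are used: relation \eqref{eq:LAT2} is precisely the identity $[L_x,D_{u,v}]=D_{x\rhd u,\,v}+D_{u,\,x\rhd v}$, and relation \eqref{eq:LTS3} (valid by Lemma~\ref{lemma:LTS}) is precisely $[D_{x,y},D_{u,v}]=D_{[x,y,u],\,v}+D_{u,\,[x,y,v]}$; both right-hand sides lie in $\Der(\mathcal{A})$, and extending by bilinearity yields a well-defined $\blacktriangleright$. It is immediate from the formulas that $\mathcal{L}_i\blacktriangleright\mathcal{L}_j\subseteq\mathcal{L}_j$, so the grading is respected.

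For step (c) I would run a case analysis on the $\mathbb{Z}_2$-degrees of the three arguments of each post-Lie relation (eight cases each, roughly halved using the antisymmetry built into both sides). The ``all three in $\mathcal{A}$'' cases are tautological: $[x,y]\blacktriangleright z=D_{x,y}(z)=[x,y,z]=\operatorname{ass}_\rhd(x,y,z)-\operatorname{ass}_\rhd(y,x,z)$ gives the first relation since $\blacktriangleright$ restricts to $\rhd$ on $\mathcal{A}$, and the second relation on $\mathcal{A}$ is the computation of $x\blacktriangleright D_{y,z}$ from the previous paragraph together with bilinearity of $D_{\noarg,\noarg}$. Every remaining case has at least one argument in $\Der(\mathcal{A})$, and under the operator-calculus dictionary it collapses to: the Jacobi identity in $\End(\mathcal{A})$, the derivation identity $A\circ D_{y,z}-D_{y,z}\circ A=D_{A(y),z}+D_{y,A(z)}$ (again a restatement of \eqref{eq:LTS3}), and the defining formulas $A\blacktriangleright x=A(x)$, $(x\blacktriangleright A)(y)=x\rhd A(y)-A(x\rhd y)$. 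I would present these cases as a short table giving the two sides of each relation after applying the dictionary.

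The main obstacle is not a single deep step but the well-definedness in step (b): one must check that $x\blacktriangleright A$ and $A\blacktriangleright B$ genuinely land in the span of the inner derivations $D_{u,v}$ — a priori the commutator of two derivations of a Lie triple system, or the commutator of $L_x$ with a derivation, need not be inner — and this is exactly where \eqref{eq:LAT2} and \eqref{eq:LTS3} are indispensable; silently conflating ``$\Der(\mathcal{A})$'' with the full derivation algebra of the Lie triple system would leave a genuine gap. Once the operator dictionary and well-definedness are in place, the verification of the two post-Lie axioms becomes bookkeeping that the commutator calculus in $\End(\mathcal{A})$ makes short.
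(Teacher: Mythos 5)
Your proposal is correct and follows essentially the same route as the paper: establish the $\mathbb{Z}_2$-graded Lie structure via Proposition~\ref{prop:LTSembedding} and Remark~\ref{rmrk:82}, prove the two key identities $x\blacktriangleright D_{y,z}=D_{x\blacktriangleright y,z}+D_{y,x\blacktriangleright z}$ and $D_{x,y}\blacktriangleright D_{z,w}=D_{[x,y,z],w}+D_{z,[x,y,w]}$ (your ``well-definedness'' step, which is exactly where the paper uses \eqref{eq:LAT2} and \eqref{eq:LTS3}), and then check the post-Lie axioms case by case over the grading. Your operator-calculus dictionary ($x\mapsto L_x$, $\blacktriangleright$ on $\Der(\mathcal{A})$ as a commutator in $\operatorname{End}(\mathcal{A})$) is only a presentational streamlining of the paper's sixteen element-wise verifications, though it does collapse the longest of them, such as the computation of $[x,y,A]$, to the Jacobi identity in $\operatorname{End}(\mathcal{A})$ together with the operator identity $D_{x,y}=[L_x,L_y]-L_{x\rhd y}+L_{y\rhd x}$.
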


For the sake of convenience, we will write $\operatorname{ass}_{\blacktriangleright}(x,y,z) - \operatorname{ass}_{\blacktriangleright}(y,x,z) = [x,y,z]$.

\begin{proof}
$(\mathcal{L},[\noarg,\noarg])$ is a $\mathbb{Z}_2$-graded Lie algebra by Proposition \ref{prop:LTSembedding} and Remark \ref{rmrk:82}. Throughout this proof, let $x,y,z,w,t \in \mathcal{A}$ and $A,B,C \in \Der(\mathcal{A})$. From the definition of $(x\blacktriangleright A)$, we get
\begin{align*}
 (x \blacktriangleright D_{y,z})(w) & = x \blacktriangleright (D_{y,z}(w)) - D_{y,z}(x\blacktriangleright w)
 = x \blacktriangleright [y,z,w] - [y,z,x \blacktriangleright w] \\
 & = [x \blacktriangleright y, z, w] + [y,x\blacktriangleright z, w]
 = (D_{x\blacktriangleright y,z} + D_{y,x \blacktriangleright z})(w),
\end{align*}
which gives the equality
\begin{equation}\label{x>D}
 x \blacktriangleright D_{y,z} = D_{x\blacktriangleright y,z} + D_{y,x \blacktriangleright z}.
\end{equation}
From the definition of $A\blacktriangleright B$, we get
\begin{align*}
 (D_{x,y}\blacktriangleright D_{z,w})(t) & = (D_{x,y} \circ D_{z,w} - D_{z,w} \circ D_{x,y})(t)
 = [x,y,[z,w,t]] - [z,w,[x,y,t]] \\
 & = [[x,y,z],w,t] + [z,[x,y,w],t]
 = (D_{[x,y,z],w} + D_{z,[x,y,w]})(t),
\end{align*}
which gives the equality
\begin{equation}\label{A>B}
 D_{x,y}\blacktriangleright D_{z,w} = D_{[x,y,z],w} + D_{z,[x,y,w]}.
\end{equation}

We need to verify that the post-Lie relations are satisfied for all combinations of inputs from $\Der(\mathcal{A})$ and $\mathcal{A}$.

(i) Using \eqref{x>D}, we get
 \[
 x \blacktriangleright [y,z] = x \blacktriangleright D_{y,z} = D_{x \blacktriangleright y,z} + D_{y,x \blacktriangleright z} = [x \blacktriangleright y,z ] + [y, x\blacktriangleright z].
 \]

(ii) By the definition of $x \blacktriangleright A$, we get
 \[
 x \blacktriangleright [A,y] = - x \blacktriangleright(A(y)) = -(x\blacktriangleright A)(y) - A(x\blacktriangleright y) = [x \blacktriangleright A ,y] + [A, x \blacktriangleright y].
 \]

(iii) Follows from skew symmetry and the relation above
 \[
 x \blacktriangleright [y,A] = -x \blacktriangleright [A,y] = - [x \blacktriangleright A ,y] - [A, x \blacktriangleright y] = [x \blacktriangleright y,A] + [y, x \blacktriangleright A].
 \]

(iv) Let $B=D_{y,z}$. Using (i) and (ii), we can compute
 \begin{align*}
 x \blacktriangleright [A,B] ={}& x \blacktriangleright [A,[y,z]] = x \blacktriangleright ([[A,y],z] + [y,[A,z]] ) \\
 ={}& [x \blacktriangleright [A,y],z] + [[A,y], x \blacktriangleright z] +[x \blacktriangleright y, [A,z]] + [y, x \blacktriangleright [A,z]] \\
 ={}& [[x \blacktriangleright A, y],z] + [[A,x\blacktriangleright y],z] + [[A,y],x \blacktriangleright z] + [x \blacktriangleright y, [A,z]] \\
 &{}+ [y,[x\blacktriangleright A, z]] + [y,[A,x\blacktriangleright z]] \\
 ={}& [x\blacktriangleright A, [y,z]] + [A,[x\blacktriangleright y,z]] + [A,[y,x\blacktriangleright z]] \\
 ={}& [x\blacktriangleright A,B] + [A, x\blacktriangleright B].
 \end{align*}

(v) Let $A= D_{z,w}$, then by \eqref{A>B}, we get
 \begin{align*}
& A \blacktriangleright [x,y] = D_{z,w} \blacktriangleright D_{x,y}
 = D_{[z,w,x],y} + D_{x,[z,w,y]}
 = [A \blacktriangleright x, y] + [x, A \blacktriangleright y].
 \end{align*}

(vi) Let $A = D_{w,t}$ and $B = D_{y,z}$, then
 \begin{align*}
 A\blacktriangleright [B,x] & = - [w,t ,[y,z,x]]
 = -[[w,t,y],z,x] - [y,[w,t,z],x] - [y,z,[w,t,x]] \\
 & = [A \blacktriangleright B,x] + [B, A\blacktriangleright x].
 \end{align*}

(vii) From skew symmetry and the relation above gives \[
 A \blacktriangleright [x,B] = -A \blacktriangleright [B,x] = -[A \blacktriangleright B,x] - [B, A\blacktriangleright x] = [A \blacktriangleright x,B] + [x, A \blacktriangleright B].
 \]

(viii) $ A \blacktriangleright [B,C] = -[A,[B,C]] = -[[A,B],C] - [B,[A,C]] = [A\blacktriangleright B,C] + [B, A\blacktriangleright C]$.

(ix) $[x,y]\blacktriangleright z = D_{x,y} (z) = [x,y,z]$.

(x) From the definition of $(x \blacktriangleright A)(y)$, we have $(x \blacktriangleright A)(y) - x \blacktriangleright A(y) + A(x\blacktriangleright y) = 0$, hence we get
 \begin{align*}
 [A,x] \blacktriangleright y & = -A(x) \blacktriangleright y + ( (x \blacktriangleright A)(y) - x \blacktriangleright A(y) + A(x\blacktriangleright y) ) \\
 & = - (A\blacktriangleright x) \blacktriangleright y + (x \blacktriangleright A) \blacktriangleright y - x \blacktriangleright ( A \blacktriangleright y) + A \blacktriangleright (x \blacktriangleright y)
 = [A,x,y].
 \end{align*}

(xi) By skew symmetry and the relation above, we get
 \[
 [x,A]\blacktriangleright y = - [A,x] \blacktriangleright y = -[A,x,y] = [x,A,y].
 \]

(xii) By the Jacobi identity, we have $[B,[A,x]] - [[B,A],x] - [A,[B,x]]=0$, hence
 \begin{align*}
 [A,B] \blacktriangleright x & = [A,B]\blacktriangleright x - ([B,[A,x]] - [[B,A],x] - [A,[B,x]] ) \\
 & = [A,[B,x]] + [A,B]\blacktriangleright x - [B,[A,x]] + [[B,A],x] \\
 & = A \blacktriangleright (B \blacktriangleright x ) - (A \blacktriangleright B) \blacktriangleright x - B \blacktriangleright (A \blacktriangleright x) + (B \blacktriangleright A) \blacktriangleright x
 = [A,B,x].
 \end{align*}

(xiii) Let $A = D_{z,w}$, then by repeatedly using \eqref{x>D}, we get
 \begin{align*}
 [x,y,A] ={}& x \blacktriangleright (y \blacktriangleright D_{z,w}) - (x \blacktriangleright y) \blacktriangleright D_{z,w} - y \blacktriangleright (x \blacktriangleright D_{z,w}) + (y \blacktriangleright x) \blacktriangleright D_{z,w} \\
 ={}& x\blacktriangleright (D_{y\blacktriangleright z,w} + D_{z,y \blacktriangleright w} ) - (D_{(x\blacktriangleright y) \blacktriangleright z,w} + D_{z, (x\blacktriangleright y) \blacktriangleright w} ) \\
 &{} - y \blacktriangleright (D_{x\blacktriangleright z,w} + D_{z,x \blacktriangleright w} ) + (D_{(y\blacktriangleright x) \blacktriangleright z,w} + D_{z, (y\blacktriangleright x) \blacktriangleright w} ) \\
 ={}& D_{x \blacktriangleright (y \blacktriangleright z),w} - D_{(x\blacktriangleright y) \blacktriangleright z,w} - D_{y \blacktriangleright (x\blacktriangleright z),w} + D_{(y\blacktriangleright x) \blacktriangleright z,w} \\
 &{}+ D_{z, x \blacktriangleright (y \blacktriangleright w)} - D_{z, (x\blacktriangleright y) \blacktriangleright w} - D_{z,y \blacktriangleright (x \blacktriangleright w)} + D_{z, (y\blacktriangleright x) \blacktriangleright w} \\
 &{}+ D_{y\blacktriangleright z, x\blacktriangleright w} + D_{x \blacktriangleright z, y \blacktriangleright w} - D_{x \blacktriangleright z, y \blacktriangleright w} - D_{y \blacktriangleright z, x \blacktriangleright w} \\
 ={}& D_{[x,y,z],w} + D_{z,[x,y,w]}.
 %={}& [x,y] \blacktriangleright A
 \end{align*}
 By \eqref{A>B}, we have
 \[
 [x,y] \blacktriangleright A = D_{x,y} \blacktriangleright D_{z,w} = D_{[x,y,z],w} + D_{z,[x,y,w]} = [x,y,A].
 \]

(xiv) From (iv), we have $x \blacktriangleright [A,B] - [x \blacktriangleright A,B] - [A, x\blacktriangleright B] = 0$, hence we get
 \begin{align*}
 [A,x] \blacktriangleright B & = [A, x] \blacktriangleright B + ( x \blacktriangleright [A,B] - [x \blacktriangleright A,B] - [A, x\blacktriangleright B] ) \\
 & = - (A \blacktriangleright x) \blacktriangleright B - x \blacktriangleright (A \blacktriangleright B) + (x \blacktriangleright A) \blacktriangleright B + A \blacktriangleright (x \blacktriangleright B )
 = [A,x,B].
 \end{align*}

(xv) By skew symmetry and the relation above, we get
 \[
 [x,A] \blacktriangleright B = -[A,x] \blacktriangleright B = -[A,x,B] = [x,A,B].
 \]

(xvi) The Jacobi identity gives $[B,[A,C]] - [[B,A],C] - [A,[B,C]]=0$, and we get
 \begin{align*}
 [A,B] \blacktriangleright C &{}= -[[A,B],C] - ( [B,[A,C]] - [[B,A],C] - [A,[B,C]] ) \\
 &{}= - (A\blacktriangleright B ) \blacktriangleright C - B \blacktriangleright ( A \blacktriangleright C) + (B \blacktriangleright A ) \blacktriangleright C + A \blacktriangleright (B \blacktriangleright C) \\
 &{}= [A,B,C].
\tag*{\qed}
\end{align*}
\renewcommand{\qed}{}
\end{proof}

\begin{Remark}
 There is a way to get a post-Lie structure associated to a homogeneous space by considering a sub-bundle of the tangent bundle of the frame bundle \cite{grong_MK_S_2023}. The construction of a post-Lie algebra on this bundle is very similar to the construction of a post-Lie algebra on $\Der(\mathcal{A}) \oplus \mathcal{A}$ presented in Theorem \ref{th:pL_emb}.
\end{Remark}

For a Lie admissible triple algebra $\mathcal{A}$, let $\mathfrak{g}(\mathcal{A})$ be the associated $\mathbb{Z}_2$-graded post-Lie algebra of~$\mathcal{A}$.

\begin{Corollary}
 Let $\mathcal{B}$ be the basis of $\LAT(\mathcal{C})$ given in Theorem~$\ref{th:LATbasis}$. Let $\hat{\mathcal{B}}$ be defined by
 \begin{enumerate}\itemsep=0pt
 \item[$(i)$] if $x\in \mathcal{B}$ then $x \in \hat{\mathcal{B}}$, and
 \item[$(ii)$] if $x,y \in \mathcal{B}$ with $x>y$, then $[x,y] \in \hat{\mathcal{B}}$.
 \end{enumerate}
 Then $\hat{\mathcal{B}}$ is a basis for $\mathfrak{g}(\LAT(\mathcal{C}))$.
\end{Corollary}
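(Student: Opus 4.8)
The plan is to decompose $\mathfrak g(\LAT(\mathcal C)) = \Der(\LAT(\mathcal C)) \oplus \LAT(\mathcal C)$ into its odd and even homogeneous parts (as in Theorem \ref{th:pL_emb}) and treat each separately. The odd part is $\LAT(\mathcal C)$ itself, and item~(i) is a basis of it by Theorem \ref{th:LATbasis}. The even part is $\Der(\LAT(\mathcal C)) = \spn\{D_{x,y}\mid x,y\in\LAT(\mathcal C)\}$. Since $(x,y)\mapsto D_{x,y}$ is bilinear with $D_{x,x}=0$ and $D_{x,y}=-D_{y,x}$ (this is the skew symmetry \refp{eq:LTS1} of the triple bracket), and $\mathcal B$ spans $\LAT(\mathcal C)$, the brackets $[b_1,b_2]=D_{b_1,b_2}$ with $b_1,b_2\in\mathcal B$ and $b_1>b_2$ already span $\Der(\LAT(\mathcal C))$; so the whole statement comes down to showing that these brackets are linearly independent in $\Der(\LAT(\mathcal C))$.

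For the linear independence I would first identify $\mathfrak g(\LAT(\mathcal C))$ with a free Lie algebra. The proof of Theorem \ref{th:LATbasis} shows that the restriction of $\phi=\pi_H\circ\pi_D$ to iterated triple brackets is the Hall rewriting map $\mathfrak H$ of Definition \ref{def:Hall_map} attached to the set $S$; hence $(\LAT(\mathcal C),[\noarg,\noarg,\noarg])$ is isomorphic, as a Lie triple system, to the free Lie triple system $\LTS(S)$, with $\mathcal B = H(S)$ as its Hall triple basis (Proposition \ref{prop:LTS_Hall}). The standard embedding of the free Lie triple system on a set is the free Lie algebra on that set with its $\mathbb Z_2$-grading: the odd part is $\LTS(S)$ and the even part is $[\LTS(S),\LTS(S)]=\Der(\LTS(S))$, because the even part of a free Lie algebra is spanned by brackets of odd monomials. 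Thus $\mathfrak g(\LAT(\mathcal C)) \cong \Lie(S)$ as $\mathbb Z_2$-graded Lie algebras, and the statement reduces to the purely combinatorial claim that inside $\Lie(S)$ the set $\{[h_1,h_2]\mid h_1,h_2\in H(S),\ h_1>h_2\}$ is a basis of the even part.

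To prove this I would extend the ordering on $S$ to a Hall ordering on the monomials of $\Lie(S)$ and run the Hall rewriting there. Two facts must be reconciled: first, that bracketing two elements of $H(S)$ always produces, after Hall reduction in $\Lie(S)$, either $0$ or a single even Lie--Hall monomial, using the reduction rules together with the Jacobi identity and the vanishing $[A,A]=0$ of self-brackets in $\Der$ (these are precisely the identities that make two different pairs $(h_1,h_2)$ yield the same operator $D_{h_1,h_2}$); and, conversely, that every even Lie--Hall monomial arises this way. The first-entry condition of Definition \ref{def:H3} (``$u\in\mathcal C$ or $u=[a,b,c]$ with $c\le v$'') is what makes these compatible, since it mirrors the standardness condition ``$h''\le h$'' for a left-normed Lie--Hall element $[[h',h''],h]$. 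Granting this, the distinct brackets $[h_1,h_2]$ are exactly the even Lie--Hall basis, which, with the spanning already observed, completes the argument.

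The hard part is this last step: carefully controlling how $[h_1,h_2]$ unfolds under Hall reduction in $\Lie(S)$ and checking that all linear relations among the $D_{h_1,h_2}$ are accounted for by the obvious coincidences, with nothing left over. A possible alternative that avoids the explicit identification with $\Lie(S)$ is to argue directly that the natural surjection $\Lambda^2\LAT(\mathcal C)\twoheadrightarrow\Der(\LAT(\mathcal C))$ has kernel spanned by the images of the Lie--triple relations, and that the Hall triple basis $\mathcal B$ is built precisely so that the wedges $b_1\wedge b_2$ ($b_1>b_2$) that remain nonzero modulo that kernel are indexed exactly by $\hat{\mathcal B}\setminus\mathcal B$; but the combinatorial bookkeeping is essentially the same on either route.
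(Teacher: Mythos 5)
You correctly isolate where the real content of this corollary lies: spanning of $\Der(\LAT(\mathcal{C}))$ by the $D_{b_1,b_2}$ with $b_1>b_2$ is immediate from bilinearity and skew-symmetry, and everything hinges on their linear independence. The paper's own proof is a two-liner that simply asserts this ``by definition of $\Der(\LAT(\mathcal{C}))$'', so you are attempting strictly more than the paper does, and your decomposition into odd and even parts matches the paper's. The problem is that the combinatorial claim you reduce the independence to --- that $\{[h_1,h_2]\mid h_1,h_2\in H(S),\ h_1>h_2\}$ is a basis of the even part of $\Lie(S)$, equivalently that the ``obvious coincidences'' among the $D_{h_1,h_2}$ are only the skew-symmetry ones --- is false, so the final step of your argument cannot be carried out.

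Concretely: in any Lie triple system, relation \refp{eq:LTS3} with $(u,v)=(x,y)$ gives $[[x,y,x],y,z]+[x,[x,y,y],z]=0$ for all $z$, i.e., $D_{[x,y,x],y}=D_{[x,y,y],x}$ as operators; in the enveloping Lie algebra this is $[[[x,y],x],y]=[[[x,y],y],x]$, a consequence of Jacobi and $[[x,y],[x,y]]=0$. Take $\mathcal{C}=\{c\}$, $x=c\rhd c$ and $y=c$. Then $[x,y,y]$ and $[x,y,x]$ are both Hall triple elements of $\mathcal{B}$ (they appear in the paper's order-$4$ and order-$5$ lists), the two pairs $([x,y,x],y)$ and $([x,y,y],x)$ are distinct and both satisfy the condition ``first entry larger'', yet they produce equal nonzero elements of $\Der(\LAT(\mathcal{C}))$. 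The dimension count in $\Lie(S)$ shows the same thing: on two generators the even Witt number in degree $4$ is $3$, while there are $4$ Hall pairs of total degree $4$. So ``the distinct brackets $[h_1,h_2]$ are exactly the even Lie--Hall basis'' fails, and your alternative route through $\Lambda^2\LAT(\mathcal{C})$ meets exactly the same relations. The gap is not a matter of unfinished bookkeeping but of a false intermediate claim; repairing it requires imposing a further Hall-type admissibility condition on the pairs $(x,y)$ (or reinterpreting $\Der$), which also indicates that the corollary as stated needs such a restriction.
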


\begin{proof}
 By Theorem \ref{th:pL_emb}, $\mathfrak{g}(\LAT(\mathcal{C})) = \LAT(\mathcal{C}) \oplus \Der(\LAT(\mathcal{C}))$ where $\Der(\LAT(\mathcal{C})) = \{ [x,y] \mid x,y \in \LAT(\mathcal{C}) \}$. By Theorem \ref{th:LATbasis}, we know that $\mathcal{B}$ is a basis for $\LAT(\mathcal{C})$, and by definition of $\Der(\LAT(\mathcal{C}))$ we get that $\{[x,y] \mid x,y \in \mathcal{B}, \, x>y \}$ is a basis of $\Der(\LAT(\mathcal{C}))$.
\end{proof}

\appendix

\section{Appendix}

\begin{Lemma}\label{lemma:bin_ind}
 For all positive integers $n$ and $k$, the following relation holds:
 \begin{equation}\label{eq:bin_ind}
 \sum_{j=1}^{n-1}(n-j)\binom{(k-2) + (n-j)}{(n-j)} + \binom{k + (n-1)}{(n-1)} = n\binom{(k-1)+(n-1)}{(n-1)}.
 \end{equation}
\end{Lemma}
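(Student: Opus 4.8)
The plan is to prove the identity by induction on $n$, treating $k$ as a free parameter throughout. First I would reindex the sum by $m=n-j$, so that the claim \eqref{eq:bin_ind} becomes
\[
\sum_{m=1}^{n-1} m\binom{k+m-2}{m} + \binom{k+n-1}{n-1} = n\binom{k+n-2}{n-1}.
\]
Before anything else I would fix the convention $\binom{a}{b}=0$ whenever $b<0$ or $b>a\geq 0$, since for small $k$ the sum contains degenerate terms such as $\binom{0}{1}=0$.

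The base case $n=1$ is immediate: the sum is empty and both sides equal $\binom{k-1}{0}=1$. For the inductive step I would subtract the asserted identity for $n$ from the one for $n+1$. On the left this leaves $n\binom{k+n-2}{n}+\binom{k+n}{n}-\binom{k+n-1}{n-1}$ and on the right $(n+1)\binom{k+n-1}{n}-n\binom{k+n-2}{n-1}$, so the step reduces to the purely binomial identity
\[
n\binom{k+n-2}{n}+n\binom{k+n-2}{n-1}+\binom{k+n}{n}-\binom{k+n-1}{n-1}=(n+1)\binom{k+n-1}{n}.
\]
This collapses by two applications of Pascal's rule: $\binom{k+n-2}{n}+\binom{k+n-2}{n-1}=\binom{k+n-1}{n}$ turns the first two terms into $n\binom{k+n-1}{n}$, and $\binom{k+n}{n}-\binom{k+n-1}{n-1}=\binom{k+n-1}{n}$ turns the last two into $\binom{k+n-1}{n}$, summing to $(n+1)\binom{k+n-1}{n}$ as required.

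I do not expect a genuine obstacle here; the entire argument is bookkeeping with Pascal's rule, and the only subtlety is keeping track of the degenerate binomial coefficients. As an alternative I could evaluate the sum in closed form directly: the identity $m\binom{k+m-2}{m}=(k-1)\bigl(\binom{k+m-1}{k-1}-\binom{k+m-2}{k-2}\bigr)$ together with the hockey-stick identity $\sum_{m=0}^{M}\binom{r+m}{r}=\binom{r+M+1}{r+1}$ shows $\sum_{m=1}^{n-1}m\binom{k+m-2}{m}=(k-1)\binom{k+n-2}{k}$, after which one more Pascal step, the symmetry $\binom{k+n-2}{n-2}=\binom{k+n-2}{k}$, and the elementary relation $k\binom{k+n-2}{k}=(n-1)\binom{k+n-2}{n-1}$ give \eqref{eq:bin_ind}. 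I would present the inductive proof as the cleaner of the two.
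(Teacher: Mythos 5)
Your proof is correct and follows essentially the same route as the paper: induction on $n$ with $k$ fixed, where the inductive step amounts to comparing the identities for consecutive values of $n$ and collapsing the resulting difference by Pascal's rule. The paper phrases this by expressing $\beta_k(N-1)$ in terms of $\beta_k(N)$ rather than by subtracting the case $n$ from the case $n+1$, but the content is identical.
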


\begin{proof}
 Let $\beta_k(n)$ be given as the left-hand side of \eqref{eq:bin_ind}. Notice that this equation is trivially true for all $k$ when $n=1$, and assume that is true for all $k$ when $n<N$ for some integer $N$. Notice that
 \begin{align*}
 \beta_k(N-1) &= \sum_{j=1}^{N-2}(N-1-j)\binom{(k-2) + (N-1-j)}{(N-1-j)} + \binom{k + (N-2)}{(N-2)} \\
 &= \sum_{j=2}^{N-1}(N-j)\binom{(k-2) + (N-j)}{(N-j)} + \binom{k + (N-2)}{(N-2)} \\
 &= \beta_k(N) - (N-1)\binom{(k-2)+(N-1)}{(N-1)} - \binom{k + (N-1)}{(N-1)} +\binom{k + (N-2)}{(N-2)}.
 \end{align*}
 Rewriting this equality and using the assumption, we get
 \[
 \beta_k(N) = (N-1)\binom{k +N-3}{N-2} + (N-1)\binom{k +N-3}{N-1} + \binom{k+N-1}{N-1} - \binom{k +N-2}{N-2}.
 \]
 Using Pascal's rule, $\binom{m}{r}= \binom{m-1}{r} + \binom{m-1}{r-1}$, we get
 \[
 \beta_k(N) = (N-1)\binom{k+N-2}{N-1} + \binom{k+N-2}{N-1} = N\binom{(k-1) + (N-1)}{(N-1)}
 \]
 and the lemma follows by induction on $n$.
\end{proof}

\section{Appendix}

Recall
\begin{align*}
 & x \rhd (uv)= (x\rhd u)v + u(x\rhd v), \\
& (xu) \rhd v= x \rhd (u \rhd v) - (x \rhd u) \rhd v
\end{align*}
for $x \in \Alg(\mathcal{C})$ and $u,v \in D(\Alg(\mathcal{C}))$.

\begin{Lemma}\label{Lemma_D_property}
Let $x_0, \dots , x_n , r_1 , r_2, r_3 \in \Alg(\mathcal{C})$. Then
\begin{gather*}
 (x_1 \cdots x_n) \rhd [r_1, r_2, r_3] \\
\qquad {}- \sum_{I,J,K} \bigl[\bigl(x_{i_1} \cdots x_{i_{|I|}}\bigr) \rhd r_1, \bigl(x_{j_1} \cdots x_{j_{|J|}}\bigr) \rhd r_2, \bigl(x_{k_1} \cdots x_{k_{|K|}}\bigr) \rhd r_3\bigr] \in \mathcal{I},
\end{gather*}
where $I$, $J$, $K$ are pairwise disjoint subsets of $\{1, \dots , n\}$ such that $I \cup J \cup K = \{1, \dots , n \}$, and for any $s < t$ we have $i_s < i_t$, $j_s < j_t$ and $k_s < k_t$.
\end{Lemma}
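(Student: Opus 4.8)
The plan is induction on $n$, using the two defining relations \eqref{eq:DA1}--\eqref{eq:DA2} of the D-algebra $D(\Alg(\mathcal{C}))$ as the engine for manipulating the action $\rhd$, together with relation \eqref{eq:LAT2} --- which, being a generator of $\mathcal{I}$, says that a single element acts as a derivation of the triple bracket modulo $\mathcal{I}$. Write $v=[r_1,r_2,r_3]$, and for a subset $A=\{a_1<\dots<a_\ell\}$ of $\{1,\dots,n\}$ write $x_A:=x_{a_1}\cdots x_{a_\ell}\in D(\Alg(\mathcal{C}))$, so the assertion is $(x_1\cdots x_n)\rhd v\equiv\sum_{(I,J,K)}[x_I\rhd r_1,\,x_J\rhd r_2,\,x_K\rhd r_3]\pmod{\mathcal{I}}$, the sum ranging over ordered partitions of $\{1,\dots,n\}$ into three possibly-empty blocks. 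The base case $n=0$ is immediate ($\one\rhd v=v$, only the empty partition), and $n=1$ is exactly \eqref{eq:LAT2} applied to $x_1,r_1,r_2,r_3$, since the three ordered partitions of $\{1\}$ produce its three right-hand terms.

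For the inductive step ($n\geq2$) I would set $u=x_2\cdots x_n$ and use \eqref{eq:DA2} to write $(x_1\cdots x_n)\rhd v=x_1\rhd(u\rhd v)-(x_1\rhd u)\rhd v$. For the first summand, the induction hypothesis rewrites $u\rhd v$ modulo $\mathcal{I}$ as a sum over ordered partitions $(I,J,K)$ of $\{2,\dots,n\}$ of triple brackets $[x_I\rhd r_1,x_J\rhd r_2,x_K\rhd r_3]$; applying $x_1\rhd(-)$, distributing $x_1$ over each triple bracket via \eqref{eq:LAT2}, and then expanding $x_1\rhd(x_A\rhd r)=(x_1\cdot x_A)\rhd r+(x_1\rhd x_A)\rhd r$ by \eqref{eq:DA2}, one gets (modulo $\mathcal{I}$) two groups of terms: those in which $x_1$ is prepended to a block --- and since $1$ is the least index it heads that block, these terms are precisely $\sum_{(I,J,K)\vdash\{1,\dots,n\}}[x_I\rhd r_1,x_J\rhd r_2,x_K\rhd r_3]$, the desired total --- plus a remainder $R$ collecting the terms that carry a factor $x_1\rhd x_A$ in one slot. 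For the second summand, the Leibniz rule \eqref{eq:DA1} gives $x_1\rhd u=\sum_{m=2}^n x_2\cdots(x_1\rhd x_m)\cdots x_n$; applying the induction hypothesis to each of these length-$(n-1)$ words acting on $v$ and summing over $m$, the block-wise Leibniz identity $\sum_{m\in A}x_2\cdots(x_1\rhd x_m)\cdots = x_1\rhd x_A$ shows $(x_1\rhd u)\rhd v\equiv R\pmod{\mathcal{I}}$. The two copies of $R$ cancel, and the induction closes.

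The routine supporting facts are that $\mathcal{I}$ is a two-sided ideal for $\rhd$, so that congruences modulo $\mathcal{I}$ survive application of $x_1\rhd(-)$ and insertion into a triple bracket, and that the expansions used are exact consequences of \eqref{eq:DA1}--\eqref{eq:DA2}. The genuine difficulty is purely combinatorial: verifying that the remainder $R$ produced in the first summand --- when the extra factor $x_1\rhd x_A$ is split off from $x_1\rhd(x_A\rhd r)$ --- is matched term for term by the contribution of the Leibniz expansion of $x_1\rhd u$ in the second summand. This requires keeping careful track of the orderings inside the blocks and of which block absorbs the differentiated factor; once the partition bookkeeping and the block-wise Leibniz identity are set up, the cancellation is mechanical.
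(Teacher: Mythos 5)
Your proof is correct and follows essentially the same route as the paper's: induction on the word length, splitting off one letter via \eqref{eq:DA2}, applying the inductive hypothesis and the derivation relation \eqref{eq:LAT2} modulo $\mathcal{I}$, and cancelling the ``differentiated'' terms arising from $x_1\rhd(x_A\rhd r)$ against the Leibniz expansion of $(x_1\rhd u)\rhd v$. The only cosmetic difference is that the paper prepends a new letter $x_0$ rather than stripping off $x_1$.
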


\begin{proof}
If $n=1$, this is true by definition of $\mathcal{I}$. Assume it is true for $n=k$, then we get
\begin{gather*}
 (x_0 x_1 \cdots x_k) \rhd [r_1, r_2, r_3] = x_0 \rhd ( (x_1 \cdots x_k) \rhd [r_1, r_2 , r_3] ) \\ \hphantom{(x_0 x_1 \cdots x_k) \rhd [r_1, r_2, r_3] =}{}
 - \sum_{s=1}^k (x_1 \cdots (x_0 \rhd x_s) \cdots x_k) \rhd [r_1, r_2, r_3],
\end{gather*}
and we know that
\begin{gather*}
 (x_1 \cdots x_k) \rhd [r_1, r_2, r_3] \\
\qquad {}- \sum_{I,J,K} \bigl[\bigl(x_{i_1} \cdots x_{i_{|I|}}\bigr) \rhd r_1, \bigl(x_{j_1} \cdots x_{j_{|J|}}\bigr) \rhd r_2, \bigl(x_{k_1} \cdots x_{k_{|K|}}\bigr) \rhd r_3\bigr] \in \mathcal{I} \\
 (x_1 \cdots (x_0 \rhd x_s) \cdots x_k) \rhd [r_1, r_2, r_3] \\
\qquad {}- \sum_{I,J,K} \bigl[\bigl(\Tilde{x}_{i_1} \cdots \Tilde{x}_{i_{|I|}}\bigr) \rhd r_1, \bigl(\Tilde{x}_{j_1} \cdots \Tilde{x}_{j_{|J|}}\bigr) \rhd r_2, \bigl(\Tilde{x}_{k_1} \cdots \Tilde{x}_{k_{|K|}}\bigr) \rhd r_3\bigr] \in \mathcal{I},
\end{gather*}
where $\Tilde{x}_t=x_t$ whenever $t\neq s$ and $\Tilde{x}_s=x_0 \rhd x_s$. We also have
\begin{gather*}
 \sum_{I,J,K} \bigl( x_0 \rhd [\bigl(x_{i_1} \cdots x_{i_{|I|}}\bigr) \rhd r_1, \bigl(x_{j_1} \cdots x_{j_{|J|}}\bigr) \rhd r_2, \bigl(x_{k_1} \cdots x_{k_{|K|}}\bigr) \rhd r_3] \\ \hphantom{\sum_{I,J,K}\bigl(}{}
 - \bigl[x_0 \rhd \bigl( \bigl(x_{i_1} \cdots x_{i_{|I|}}\bigr) \rhd r_1 \bigr), \bigl(x_{j_1} \cdots x_{j_{|J|}}\bigr) \rhd r_2, \bigl(x_{k_1} \cdots x_{k_{|K|}}\bigr) \rhd r_3\bigr] \\ \hphantom{\sum_{I,J,K}\bigl(}{}
 - \bigl[\bigl(x_{i_1} \cdots x_{i_{|I|}}\bigr) \rhd r_1, x_0 \rhd \bigl( \bigl(x_{j_1} \cdots x_{j_{|J|}}\bigr) \rhd r_2\bigr) , \bigl(x_{k_1} \cdots x_{k_{|K|}}\bigr) \rhd r_3\bigr] \\ \hphantom{\sum_{I,J,K}\bigl(}{}
 - \bigl[\bigl(x_{i_1} \cdots x_{i_{|I|}}\bigr) \rhd r_1, \bigl(x_{j_1} \cdots x_{j_{|J|}}\bigr) \rhd r_2, x_0 \rhd \bigl( \bigl(x_{k_1} \cdots x_{k_{|K|}}\bigr) \rhd r_3 \bigr) \bigr] \bigr) \in \mathcal{I}.
\end{gather*}
Notice that
\[
x_0 \rhd \bigl( \bigl(x_{i_1} \cdots x_{i_{|I|}}\bigr) \rhd r_1 \bigr) = \bigl(x_0x_{i_1} \cdots x_{i_{|I|}}\bigr) \rhd r_1 + \sum_{s=i_1}^{i_{|I|}} \bigl(x_{i_1} \cdots (x_0 \rhd x_s) \cdots x_{i_{|I|}}\bigr) \rhd r_1,
\]
hence we can rewrite the relation above as
\begin{gather*}
 \sum_{I,J,K} \bigl( x_0 \rhd \bigl[\bigl(x_{i_1} \cdots x_{i_{|I|}}\bigr) \rhd r_1, \bigl(x_{j_1} \cdots x_{j_{|J|}}\bigr) \rhd r_2, \bigl(x_{k_1} \cdots x_{k_{|K|}}\bigr) \rhd r_3\bigr] \bigr)\\
 \qquad - \sum_{s=1}^k \sum_{I,J,K} \bigl[\bigl(\Tilde{x}_{i_1} \cdots \Tilde{x}_{i_{|I|}}\bigr) \rhd r_1, \bigl(\Tilde{x}_{j_1} \cdots \Tilde{x}_{j_{|J|}}\bigr) \rhd r_2, \bigl(\Tilde{x}_{k_1} \cdots \Tilde{x}_{k_{|K|}}\bigr) \rhd r_3\bigr] \\
 \qquad - \sum_{\hat{I},\hat{J},\hat{K}} \bigl[\bigl(x_{i_1} \cdots x_{i_{|\hat{I}|}}\bigr) \rhd r_1, \bigl(x_{j_1} \cdots x_{j_{|\hat{J}|}}\bigr) \rhd r_2, \bigl(x_{k_1} \cdots x_{k_{|\hat{K}|}}\bigr) \rhd r_3\bigr] \in \mathcal{I},
\end{gather*}
where $\hat{I}$, $\hat{J}$ and $\hat{K}$ are defined in the same way as $I$, $J$ and $K$, just on the set $\{0, 1, \dots ,k \}$. When put together, this gives
\begin{gather*}
 x_0 \rhd \bigl( (x_1 \cdots x_k) \rhd [r_1, r_2, r_3] \\
 \qquad {}- \sum_{I,J,K} \bigl[\bigl(x_{i_1} \cdots x_{i_{|I|}}\bigr) \rhd r_1, \bigl(x_{j_1} \cdots x_{j_{|J|}}\bigr) \rhd r_2, \bigl(x_{k_1} \cdots x_{k_{|K|}}\bigr) \rhd r_3\bigr] \bigr) \\
 \qquad {}- \sum_{s=1}^k \biggl( (x_1 \cdots (x_0 \rhd x_s) \cdots x_k) \rhd [r_1, r_2, r_3] \\
 \qquad {}\hphantom{- \sum_{s=1}^k \biggl(}{}- \sum_{I,J,K} \bigl[\bigl(\Tilde{x}_{i_1} \cdots \Tilde{x}_{i_{|I|}}\bigr) \rhd r_1, \bigl(\Tilde{x}_{j_1} \cdots \Tilde{x}_{j_{|J|}}\bigr) \rhd r_2, \bigl(\Tilde{x}_{k_1} \cdots \Tilde{x}_{k_{|K|}}\bigr) \rhd r_3\bigr] \biggr) \\
 \qquad {}+ \sum_{I,J,K} \bigl( x_0 \rhd \bigl[\bigl(x_{i_1} \cdots x_{i_{|I|}}\bigr) \rhd r_1, \bigl(x_{j_1} \cdots x_{j_{|J|}}\bigr) \rhd r_2, \bigl(x_{k_1} \cdots x_{k_{|K|}}\bigr) \rhd r_3\bigr] \bigr)\\
 \qquad {}- \sum_{s=1}^k \sum_{I,J,K} \bigl[\bigl(\Tilde{x}_{i_1} \cdots \Tilde{x}_{i_{|I|}}\bigr) \rhd r_1, \bigl(\Tilde{x}_{j_1} \cdots \Tilde{x}_{j_{|J|}}\bigr) \rhd r_2, \bigl(\Tilde{x}_{k_1} \cdots \Tilde{x}_{k_{|K|}}\bigr) \rhd r_3\bigr] \\
 \qquad {}- \sum_{\hat{I},\hat{J},\hat{K}} \bigl[\bigl(x_{i_1} \cdots x_{i_{|\hat{I}|}}\bigr) \rhd r_1, \bigl(x_{j_1} \cdots x_{j_{|\hat{J}|}}\bigr) \rhd r_2, \bigl(x_{k_1} \cdots x_{k_{|\hat{K}|}}\bigr) \rhd r_3\bigr] \\
 \quad {}= x_0 \rhd ( (x_1 \cdots x_k) \rhd [r_1, r_2, r_3]) - \sum_{s=1}^k (x_1 \cdots (x_0 \rhd x_s) \cdots x_k) \rhd [r_1, r_2, r_3] \\
 \qquad {}- \sum_{\hat{I},\hat{J},\hat{K}} \bigl[\bigl(x_{i_1} \cdots x_{i_{|\hat{I}|}}\bigr) \rhd r_1, \bigl(x_{j_1} \cdots x_{j_{|\hat{J}|}}\bigr) \rhd r_2, \bigl(x_{k_1} \cdots x_{k_{|\hat{K}|}}\bigr) \rhd r_3\bigr] \\
 \quad {}= (x_0x_1 \cdots x_k) \rhd [r_1, r_2, r_3] \\
 \qquad {}- \sum_{\hat{I},\hat{J},\hat{K}} \bigl[\bigr(x_{i_1} \cdots x_{i_{|\hat{I}|}}\bigr) \rhd r_1, \bigl(x_{j_1} \cdots x_{j_{|\hat{J}|}}\bigr) \rhd r_2, \bigl(x_{k_1} \cdots x_{k_{|\hat{K}|}}\bigr) \rhd r_3\bigr] \in \mathcal{I}.
\end{gather*}
Hence the statement is true for $n=k+1$, and the lemma follows by induction.
\end{proof}

\subsection*{Acknowledgements}
The authors are supported by the Research Council of Norway through project 302831 Computational Dynamics and Stochastics on Manifolds (CODYSMA). Thanks to Kristoffer F{\o}llesdal for early discussions on this research.

\pdfbookmark[1]{References}{ref}
\LastPageEnding


\begin{thebibliography}{99}
\footnotesize\itemsep=0pt

\bibitem{Al-Kaabi_14}
Al-Kaabi M.J.H., Monomial bases for free pre-{L}ie algebras, \textit{S\'em.
 Lothar. Combin.} \textbf{71} (2014), B71b, 19~pages, \href{https://arxiv.org/abs/1309.4243}{arXiv:1309.4243}.

\bibitem{Bertram_00}
Bertram W., The geometry of {J}ordan and {L}ie structures, \textit{Lecture
 Notes in Math.}, Vol.~1754, \href{https://doi.org/10.1007/b76884}{Springer}, Berlin, 2000.

\bibitem{Besse_1987}
Besse A.L., Einstein manifolds, \textit{Classics Math.}, \href{https://doi.org/10.1007/978-3-540-74311-8}{Springer}, Berlin, 2008.

\bibitem{CALAQUE_2011}
Calaque D., Ebrahimi-Fard K., Manchon D., Two interacting {H}opf algebras of
 trees: a {H}opf-algebraic approach to composition and substitution of
 {B}-series, \href{https://doi.org/10.1016/j.aam.2009.08.003}{\textit{Adv. in Appl. Math.}} \textbf{47} (2011), 282--308,
 \href{https://arxiv.org/abs/0806.2238}{arXiv:0806.2238}.

\bibitem{Chapoton_and_Livernet_01}
Chapoton F., Livernet M., Pre-{L}ie algebras and the rooted trees operad,
 \href{https://doi.org/10.1155/S1073792801000198}{\textit{Internat. Math. Res. Notices}} \textbf{2001} (2001), 395--408,
 \href{https://arxiv.org/abs/math.QA/0002069}{arXiv:math.QA/0002069}.

\bibitem{B-series_CHV_2010}
Chartier P., Hairer E., Vilmart G., Algebraic structures of {B}-series,
 \href{https://doi.org/10.1007/s10208-010-9065-1}{\textit{Found. Comput. Math.}} \textbf{10} (2010), 407--427.

\bibitem{CEFMK19}
Curry C., Ebrahimi-Fard K., Munthe-Kaas H., What is a post-{L}ie algebra and
 why is it useful in geometric integration, in Numerical Mathematics and
 Advanced Applications -- {ENUMATH} 2017, \textit{Lect. Notes Comput. Sci.
 Eng.}, Vol. 126, \href{https://doi.org/10.1007/978-3-319-96415-7_38}{Springer}, Cham, 2019, 429--437, \href{https://arxiv.org/abs/1712.09415}{arXiv:1712.09415}.

\bibitem{Dzhumadildeav_and_Lofwall_02}
Dzhumadil'daev A., L\"ofwall C., Trees, free right-symmetric algebras, free
 {N}ovikov algebras and identities, \href{https://doi.org/10.4310/hha.2002.v4.n2.a8}{\textit{Homology Homotopy Appl.}}
 \textbf{4} (2002), 165--190.

\bibitem{KMKL_15}
Ebrahimi-Fard K., Lundervold A., Munthe-Kaas H.Z., On the {L}ie enveloping
 algebra of a post-{L}ie algebra, \textit{J.~Lie Theory} \textbf{25} (2015),
 1139--1165, \href{https://arxiv.org/abs/1410.6350}{arXiv:1410.6350}.

\bibitem{Gerstenhaber_63}
Gerstenhaber M., The cohomology structure of an associative ring, \href{https://doi.org/10.2307/1970343}{\textit{Ann.
 of Math.}} \textbf{78} (1963), 267--288.

\bibitem{GS_00}
Golubchik I.Z., Sokolov V.V., Generalized operator {Y}ang--{B}axter equations,
 integrable {ODE}s and nonassociative algebras, \href{https://doi.org/10.2991/jnmp.2000.7.2.8}{\textit{J.~Nonlinear Math.
 Phys.}} \textbf{7} (2000), 184--197, \href{https://arxiv.org/abs/nlin.SI/0003034}{arXiv:nlin.SI/0003034}.

\bibitem{grong_MK_S_2023}
Grong E., Munthe-Kaas H.Z., Stava J., Post-{L}ie algebra structure of manifolds
 with constant curvature and torsion, \textit{J.~Lie Theory} \textbf{34}
 (2024), 339--352, \href{https://arxiv.org/abs/2305.02688}{arXiv:2305.02688}.

\bibitem{iserles2000lie}
Iserles A., Munthe-Kaas H.Z., N{\o}rsett S.P., Zanna A., Lie-group methods, in
 Acta Numerica, 2000, \textit{Acta Numer.}, Vol.~9, \href{https://doi.org/10.1017/S0962492900002154}{Cambridge University
 Press}, Cambridge, 2000, 215--365.

\bibitem{Jacobson1951}
Jacobson N., General representation theory of {J}ordan algebras, \href{https://doi.org/10.2307/1990612}{\textit{Trans.
 Amer. Math. Soc.}} \textbf{70} (1951), 509--530.

\bibitem{Loos1}
Loos O., Symmetric spaces.~{I}: {G}eneral theory, W.A.~Benjamin, New York,
 1969.

\bibitem{MK_1995_LB}
Munthe-Kaas H., Lie--{B}utcher theory for {R}unge--{K}utta methods,
 \href{https://doi.org/10.1007/BF01739828}{\textit{BIT}} \textbf{35} (1995), 572--587.

\bibitem{munthekaas24gio}
Munthe-Kaas H., Geometric integration on symmetric spaces, \href{https://doi.org/10.3934/jcd.2023015}{\textit{J.~Comput.
 Dyn.}} \textbf{11} (2024), 43--58, \href{https://arxiv.org/abs/2308.16012}{arXiv:2308.16012}.

\bibitem{MKK_03}
Munthe-Kaas H., Krogstad S., On enumeration problems in {L}ie--{B}utcher
 theory, \href{https://doi.org/10.1016/S0167-739X(03)00045-1}{\textit{Future Generation Comput. Syst.}} \textbf{19} (2003),
 1197--1205.

\bibitem{MK_Lundervold_2013}
Munthe-Kaas H.Z., Lundervold A., On post-{L}ie algebras, {L}ie--{B}utcher
 series and moving frames, \href{https://doi.org/10.1007/s10208-013-9167-7}{\textit{Found. Comput. Math.}} \textbf{13} (2013),
 583--613, \href{https://arxiv.org/abs/1203.4738}{arXiv:1203.4738}.

\bibitem{MK_Stern_Verdier_20}
Munthe-Kaas H.Z., Stern A., Verdier O., Invariant connections, {L}ie algebra
 actions, and foundations of numerical integration on manifolds, \href{https://doi.org/10.1137/19M1252879}{\textit{SIAM
 J.~Appl. Algebra Geom.}} \textbf{4} (2020), 49--68, \href{https://arxiv.org/abs/1903.10056}{arXiv:1903.10056}.

\bibitem{MK_Wright_08}
Munthe-Kaas H.Z., Wright W.M., On the {H}opf algebraic structure of {L}ie group
 integrators, \href{https://doi.org/10.1007/s10208-006-0222-5}{\textit{Found. Comput. Math.}} \textbf{8} (2008), 227--257,
 \href{https://arxiv.org/abs/math.AC/0603023}{arXiv:math.AC/0603023}.

\bibitem{Nomizu_54}
Nomizu K., Invariant affine connections on homogeneous spaces, \href{https://doi.org/10.2307/2372398}{\textit{Amer.
 J.~Math.}} \textbf{76} (1954), 33--65.

\bibitem{Reutenauer1993free}
Reutenauer C., Free {L}ie algebras, \textit{London Math. Soc. Monogr.}, Vol.~7,
 The Clarendon Press, Oxford University Press, New York, 1993.

\bibitem{Ricci_1900}
Ricci M.M.G., Levi-Civita T., M\'ethodes de calcul diff\'erentiel absolu et leurs
 applications, \href{https://doi.org/10.1007/BF01454201}{\textit{Math. Ann.}} \textbf{54} (1900), 125--201.

\bibitem{Sokolov_17}
Sokolov V., Algebraic structures related to integrable differential equations,
 \textit{Ensaios Mat.}, Vol.~31, Sociedade Brasileira de Matem\'atica, Rio de
 Janeiro, 2017, \href{https://arxiv.org/abs/1711.10613}{arXiv:1711.10613}.

\bibitem{BV_2007}
Vallette B., Homology of generalized partition posets, \href{https://doi.org/10.1016/j.jpaa.2006.03.012}{\textit{J.~Pure Appl.
 Algebra}} \textbf{208} (2007), 699--725, \href{https://arxiv.org/abs/math.AT/0405312}{arXiv:math.AT/0405312}.

\bibitem{Vinberg}
Vinberg E., The theory of homogeneous convex cones, \textit{Trans. Mosc. Math.
 Soc.} \textbf{12} (1963), 340--403.

\bibitem{yamaguti1957}
Yamaguti K., On algebras of totally geodesic spaces ({L}ie triple systems),
 \href{https://doi.org/10.32917/hmj/1555639501}{\textit{J.~Sci. Hiroshima Univ. Ser.~A}} \textbf{21} (1957), 107--113.

\end{thebibliography}
\end{document}